\DeclareSymbolFontAlphabet{\mathcalorig}   {symbols}
\newcommand{\al}{\alpha}
\newcommand{\be}{\beta}
\newcommand{\ga}{\gamma}
\newcommand{\si}{\sigma}
\newcommand{\La}{\Lambda}
\newcommand{\De}{\Delta}
\newcommand{\Si}{\Sigma}
\newcommand{\ZZ}{{\mathbb Z}}
\newcommand{\QQ}{{\mathbb Q}}
\newcommand{\RR}{{\mathbb R}}
\newcommand{\cC}{\mathcal C}
\newcommand{\cF}{\mathcal F}
\newcommand{\cG}{\mathcal G}
\newcommand{\cL}{\mathscr L}
\newcommand{\cX}{\mathcalorig X}
\newcommand{\tr}{\mathsf{T}}
\newcommand{\lto}{\longrightarrow}
\newcommand{\lk}{\operatorname{\ell{\it k}}}
\newcommand{\sig}{\operatorname{sig}}
\newcommand{\sm}{\smallsetminus}
\newcommand{\co}{\colon}
\newcommand{\spans}{\operatorname{span}}
\newcommand{\In}{\operatorname{In}}
\newcommand{\Out}{\operatorname{Out}}
\newcommand{\ft}{{\mathfrak{t}}}
\newcommand{\wt}{\widetilde}
\newcommand{\im}{\operatorname{im}}
\newcommand{\Char}{\operatorname{Char}}
\newcommand{\Short}{\operatorname{Short}}
\newcommand{\Spin}{\operatorname{Spin}}
\newcommand{\spin}{\operatorname{spin}}
\newcommand{\flows}{\mathcal F}
\newcommand{\cuts}{\mathcal C}
\newcommand{\inject}{\hookrightarrow}
\newcommand*\wbar[1]{
  \hbox{ \kern-0.2em%
    \vbox{%
      \hrule height 0.75pt  
      \kern0.25ex
      \hbox{%
        \kern-0.10em
        \ensuremath{#1}%
        \kern-0.05em
      }%
    }%
  \kern0.05em}%
}
\newcommand*\bigcdot{\mathpalette\bigcdot@{.55}}
\newcommand*\bigcdot@[2]{\mathbin{\vcenter{\hbox{\scalebox{#2}{$\m@th#1\bullet$}}}}}
\newtheorem{theorem}{Theorem} [section]
\newtheorem{lemma}[theorem]{Lemma}
\newtheorem{corollary}[theorem]{Corollary}
\newtheorem{question}[theorem]{Question}
\theoremstyle{definition}     
\newtheorem{definition}[theorem]{Definition}
\theoremstyle{remark}
\newtheorem{remark}[theorem]{Remark}
\newtheorem{example}[theorem]{Example}
\title[{\it Mutation, surface graphs, and alternating links in surfaces}]
{Mutation, surface graphs, and \\ alternating links in surfaces}
\author[Boden]{Hans U. Boden}
\address{Mathematics \& Statistics, McMaster University, Hamilton, Ontario Canada}
\email{boden@mcmaster.ca}
\thanks{HB was partially funded by the Natural Sciences and Engineering Research Council of Canada. This research was made possible by travel support from the Sydney Mathematical Research Institute at the University of Sydney, and the University of Sydney Faculty of Science Visiting Scholar Scheme.} 
\author[Dancso]{Zsuzsanna Dancso}
\address{Mathematics \& Statistics, University of Sydney, Sydney, NSW Australia}
\email{zsuzsanna.dancso@sydney.edu.au}
\author[Lin]{Damian J. Lin}
\address{Mathematics \& Statistics, University of Sydney, Sydney, NSW Australia}
\email{d.lin@maths.usyd.edu.au}
\author[Wilkinson-Finch]{Tilda S. Wilkinson-Finch}
\address{Mathematics \& Statistics, University of Sydney, Sydney, NSW Australia}
\email{twil0190@uni.sydney.edu.au}
\subjclass[2020]{57M15 (primary), 05C21, 05C10, 11H55, 57K10, 57K12 (secondary)}
\keywords{alternating link, Tait graph, definite lattice, integer cuts and flows, Tait conjectures, mutation, $d$-invariants}
\begin{document}

\begin{abstract}
In this paper, we study alternating links in thickened surfaces in terms of the lattices of integer flows on their Tait graphs. We use this approach to give a short proof of the first two generalised Tait conjectures. We also prove that the flow lattice is an invariant of alternating links in thickened surfaces and is further invariant under disc mutation. For classical links, the flow lattice and $d$-invariants are complete invariants of the mutation class of an alternating link. For  links in thickened surfaces, we show that this is no longer the case by finding a stronger mutation invariant, namely the Gordon-Litherland linking form. In particular, we find alternating knots in thickened surfaces which have isometric flow lattices but with non-isomorphic linking forms. 
\end{abstract}

\maketitle

 
\section{Introduction}
This paper investigates the lattice of integer flows of the Tait graphs of a link diagram, and their $d$-invariants, as mutation invariants of alternating links in thickened surfaces. 

Our motivation is to generalise to the setting of virtual links, insofar as possible, Greene's results classifying alternating links up to mutation in terms of their $d$-invariants \cite{Greene-2011}. To this end, we study mutation invariants of alternating links in thickened surfaces. We establish that the lattices of integer flows of the Tait graphs are mutation invariants of alternating surface links, and thus so are their $d$-invariants. We show that a range of other virtual link invariants -- such as the mock Alexander polynomials and signatures -- are also invariants of mutation. We use this result to prove that, in contrast to the classical case, for alternating links in thickened surfaces, the $d$-invariants and lattices of integer flows are not complete invariants of the mutation class. As a further application, we give a short proof for the first and second generalised Tait conjectures for non-split alternating links in thickened surfaces, following the approach of \cite{Greene-2017}.
For other recent results on the generalised Tait conjectures, see \cite{Boden-Karimi-2019, Boden-Karimi-Sikora, Kindred-2022}.

\vspace*{3mm}  \noindent
{\bf Conway mutation and branched double covers.} Conway mutation is the operation of cutting along a 2-sphere $S$ that meets the link $L$ transversely in four points, and regluing after performing an involution that permutes the four points of $S \cap L$. Two links are called \textit{mutants} if they can be related by a finite sequence of mutations. For classical links, most invariants (such as the Alexander, Jones, and HOMFLY polynomials, and the signature) are unable to distinguish mutant links.  Indeed, positive mutant knots have $S$-equivalent Seifert matrices \cite[Theorem 2.1]{Kirk-Livingston-2001}. Positive mutants were also conjectured to be concordant \cite[Problem 1.53]{Kirby-Problem}, though counterexamples to this conjecture were found in \cite{Kirk-Livingston-1999, Kirk-Livingston-2001}.   In \cite{Traldi-2022},  Traldi proves a general result classifying links up to mutation in terms of their Goeritz matrices.

Branched covering spaces of links in $S^3$ have been a rich source of link invariants. Among them, the branched double cover $X_2(L)$ plays the preeminent role.  For example, when $L$ is a two-bridge knot or link, the branched double cover $X_2(L)$ is a lens space. Schubert gave a complete classification of two-bridge knots and links in terms of the topological classification of lens spaces \cite{Brody-1960}.  In \cite{Greene-2011}, Greene proved a highly non-trivial generalisation of Shubert's theorem to alternating links.
Mutation enters the picture since a theorem of Viro asserts that if $L$ and $L'$ are mutant links, then $X_2(L) \cong X_2(L')$ \cite{Viro-1976}. Thus the homeomorphism type of $X_2(L)$ is also unable to distinguish mutant links.

\vspace*{3mm}  \noindent
{\bf Greene's lattice-theoretic approach.} In \cite[Theorem 1.1]{Greene-2011}, Greene proved a converse to Viro's theorem: two non-split alternating links $L$ and $L'$ are mutants if and only if their branched double covers $X_2(L)$ and $X_2(L')$ are homeomorphic. Greene also identified a complete invariant of the homeomorphism type of the branched double cover $X_2(L)$, namely, the Heegaard Floer homology groups $\widehat{HF}(X_2(L))$, or equivalently, the $d$-invariants. The $d$-invariants were originally defined by Ozsv\'ath and Szab\'o \cite{Ozsvath-Szabo-2003a} and can be described as follows. For non-split alternating links, the branched double cover $X_2(L)$ is an $L$-space, which means that the Heegaard Floer homology group $\widehat{HF}(X_2(L),\ft)$ has rank one for each $\spin^c$ structure $\ft$. The  $d$-invariant $d(X_2(L),\ft) \in \QQ$ records the absolute grading of a preferred generator. Thus, $\widehat{HF}(X_2(L))$ is completely determined by the $d$-invariant map $d\co \Spin^c(X_2(L)) \lto \QQ$, and by Greene's Theorem they give a complete set of invariants for the homeomorphism type of  $X_2(L)$.

For alternating links, there is a purely lattice-theoretic definition of the $d$-invariants  \cite{Greene-2011} (we review this in detail in Section \ref{subsec-dInv}). The lattice theoretic $d$-invariant, $d_{\cF}$, is constructed from the lattice of integer flows $\cF(G)$ of a Tait graph $G$ of an alternating link diagram, and it can be identified with the Heegaard Floer $d$-invariant. Greene showed that $d_\cF$ is a complete invariant of the flow lattice up to isometry, and a complete invariant of the graph $G$ up to 2-isomorphism \cite[Theorem 1.3]{Greene-2011}. 

In turn, Greene proves \cite[Section 3.4]{Greene-2011} that the 2-isomorphism class of the Tait graph uniquely determines the mutation class of $L$. This is established by studying planar embeddings of two-edge-connected planar graphs and using a classical theorem of Whitney to show that any two planar embeddings of 2-isomorphic graphs differ by a finite sequence of basic moves (flips, planar switches, swaps, and isotopies). Each of these moves can be translated into either a mutation or a planar isotopy on the link diagram $D$.  Therefore, $d_{\cF}$ -- and equivalently, the Heegaard-Floer $d$-invariant -- is a complete mutation invariant of alternating links.

In a subsequent paper \cite{Greene-2017}, Greene gave a geometric characterisation of alternating links: a non-split link $L$ is alternating if and only if there exist a pair of spanning surfaces for $L$, which are positive and negative definite with respect to the Gordon-Litherland pairing. A similar result was also obtained by Howie \cite{Howie-2017}. As an application, also exploiting the equivalence between mutation of alternating knots and 2-isomorphisms of the Tait graphs, Greene gave a geometric proof for the first two Tait conjectures, namely that any two connected reduced alternating diagrams of the same link have equal crossing number and writhe.

\vspace*{3mm} \noindent
{\bf Summary of results.} In this paper, we study analogous questions for alternating links in thickened surfaces.  Given an alternating link diagram in a closed surface $\Si$, we use the Tait graphs embedded in $\Si$ to obtain lattices of integer flows $\cF$ and corresponding $d$-invariants.

For planar graphs, there is a classical duality theorem \cite{Backer-et-al} for the cut ($\cC$) and flow lattices: if $G$ and $G^*$ are planar duals, then $\cC(G^*)\cong \cF(G)$. In Theorem~\ref{thm:SurfaceCutFlowDuality} we prove the analogous theorem for graphs $G$ and $G^*$ which are duals on a closed surface $\Si$: there is a short exact sequence  
$
0\to \cC(G^*) \to \cF(G) \to H_1(\Si;\ZZ)\to 0.
$
We deduce a \textit{chromatic duality} theorem for $d$-invariants (Theorem~\ref{thm-ChromDuality}). 

The Gordon-Litherland pairing was extended to links in thickened surfaces in \cite{Boden-Chrisman-Karimi}. Greene's geometric characterisation of alternating links \cite{Greene-2017} was also generalised to alternating links in thickened surfaces in \cite{Boden-Karimi-2023b}. In the present paper we show that the flow lattice $\cF(G)$ of a Tait graph $G$ associated to a checkerboard coloured reduced alternating link diagram coincides with the Gordon-Litherland pairing on the first homology of the corresponding checkerboard surface (Theorem~\ref{thm-equiv}).  We then combine these results with the Discrete Torelli theorem \cite{Caporaso-Viviani,Su-Wagner} -- a completeness result for lattices of integer flows -- to give a short proof of the first two generalised Tait conjectures (Theorem~\ref{Thm-Tait}). Namely, any two reduced, alternating diagrams of a non-split link in a thickened surface have the same crossing number and writhe. 

In Theorem~\ref{thm-isom}, we show that the Gordon-Litherland pairings on the positive and negative definite checkerboard surfaces are invariants of non-split alternating links. Thus, so are lattices of integer flows of the Tait graphs, and the corresponding $d$-invariants (Corollary~\ref{cor-invariance}).
Furthermore, we show that the flow lattices and $d$-invariants are, in fact, invariant under disc mutation of alternating links in thickened surfaces (Theorem \ref{thm-mutation1}). 

For classical alternating links, Greene \cite{Greene-2011} proved that the $d$-invariants are complete invariants of the mutation class, and it is natural to ask whether the same is true for alternating links in thickened surfaces. However, the Gordon-Litherland pairing -- which coincides with the flow lattice -- is a symmetrisation of the \textit{Gordon-Litherland linking form} \cite{Boden-Karimi-2023b}, which in turn gives rise to a range of invariants potentially stronger than the $d$-invariants and flow lattices. In Theorem~\ref{thm-linking-form-mutation}, we show that the Gordon-Litherland linking form is indeed unchanged under disc mutation of surface link diagrams, and as a result so is any invariant derived from it, including the mock Alexander polynomial and mock Levine-Tristram signatures.

Finally, in Example~\ref{ex-NonCompleteness} and Corollary~\ref{cor-NonCompleteness}, we show that -- in contrast to the classical setting -- the $d$-invariants and lattices of integer flows are not complete invariants of alternating surface links up to disc mutation. This is achieved by finding an example of non-mutant alternating knots with isometric Gordon-Litherland pairings, and in particular different mock Alexander polynomials. 

One motivation for this paper is to extend the methods and results of \cite{Greene-2011} to virtual links. Virtual links can be viewed as links in thickened surfaces up to homeomorphisms and stabilisation, and by Kuperberg’s theorem, \cite{Kuperberg}, any minimal genus realisation of a virtual link is unique up to homeomorphism. The cut and flow lattices and $d$-invariants studied here are invariant under surface homeomorphism, and any reduced alternating diagram of a link in a surface is minimal genus \cite[Corollary 3.6]{Boden-Karimi-2023}. Therefore, the results in this paper can be restated in the language of virtual links.

\vspace*{3mm}  \noindent
{\bf Organisation.} The paper is organised as follows. In Section \ref{section-LinksSurfacesGraphs}, we introduce basic notions for links in thickened surfaces, surface graphs, and mutation. In Section \ref{section-Lattices}, we introduce the flow and cut lattices for graphs in surfaces, define their $d$-invariants, and prove a duality result for the  $d$-invariants under surface duality.  In Section \ref{section-GLform}, we recall the Gordon-Litherland linking form and pairing, and we relate the Gordon-Litherland pairing to the lattice of flows for alternating links in thickened surfaces. Section \ref{section-main} contains the main results, including a new proof of the first two Tait conjectures for links in thickened surfaces and proofs that the Gordon-Litherland pairing and linking form are mutation invariants of alternating links in thickened surfaces. It also presents an example of two knots that are not mutants but whose lattices of cuts and flows are isometric. It follows that the cut and flow lattices, and $d$-invariants, are not complete invariants of mutation type.

\vspace*{3mm}  \noindent
{\bf Conventions.} 
Spanning surfaces are assumed to be compact and connected but not necessarily orientable. Graphs are multigraphs and may contain loops and multiple edges. Decimal numbers such as 3.7 and 5.2429 refer to virtual knots in Green's tabulation \cite{Green}.

\section{Links, surfaces and graphs} \label{section-LinksSurfacesGraphs}
In this section, we introduce some basic notions for links in thickened surfaces, such as link diagrams, equivalence, stabilization, checkerboard colourability, Tait graphs, and mutation.

\subsection{Links in thickened surfaces}\label{subsec:Links}
Let $\Si$ be a closed, oriented surface and  $I = [0,1]$  the unit interval.  A \textit{link} $L$ in $\Si \times I$ is an embedding $L\co \sqcup_{i=1}^m S^1 \hookrightarrow \Si \times I$, taken up to isotopy and orientation preserving homeomorphisms of the pair $(\Si\times I, \Si \times \{0\}).$ Two links $L$ and $L'$ in $\Si\times I$ are \textit{equivalent} if there is a homeomorphism of  $(\Si\times I, \Si\times \{0\})$ taking $L$ to $L'$.  

A \textit{diagram} $D$ for the link is the image of $L \subseteq \Si \times I$ under regular projection  $\Si \times I \to \Si$, with finitely many double points (or \textit{crossings}). Crossings are drawn in the usual way with the undercrossing arc broken into two arcs at each crossing, see Figure~\ref{fig:surface-knot-diagram-examples}. Two link diagrams represent equivalent links if and only if they are equivalent via Reidemeister moves and homeomorphisms on the surface.

\begin{figure}[hbt!]
	\centering
	\hspace*{\fill}
	\begin{subfigure}[t]{0.3 \textwidth}
		\centering
        \def\svgwidth{0.9\columnwidth}
		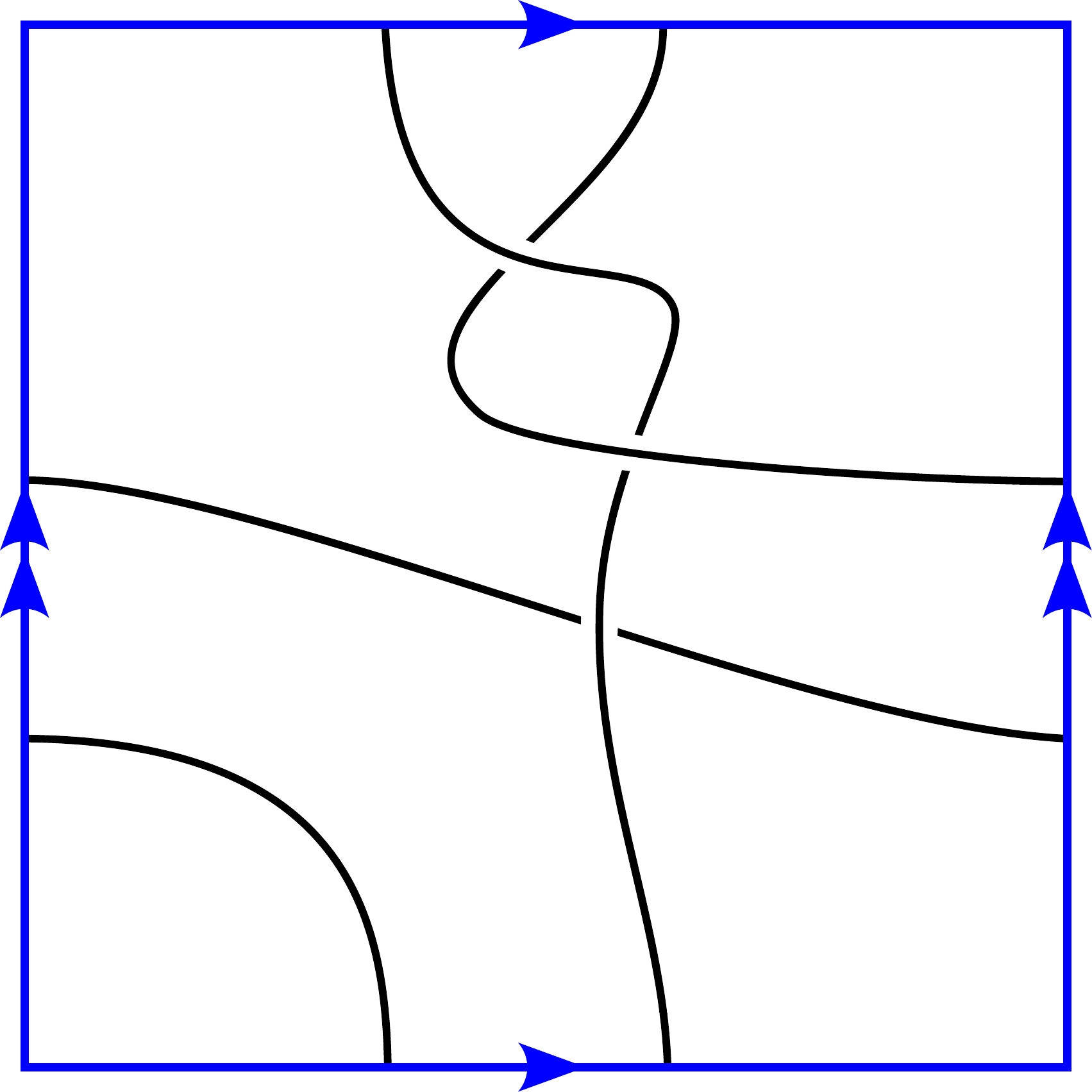
		\caption{$K_{3.7}$}
		\label{fig:3-7-gluing}
	\end{subfigure}
	\hspace*{\fill}
	\begin{subfigure}[t]{0.3 \textwidth}
		\centering
		\def\svgwidth{0.9\columnwidth}
		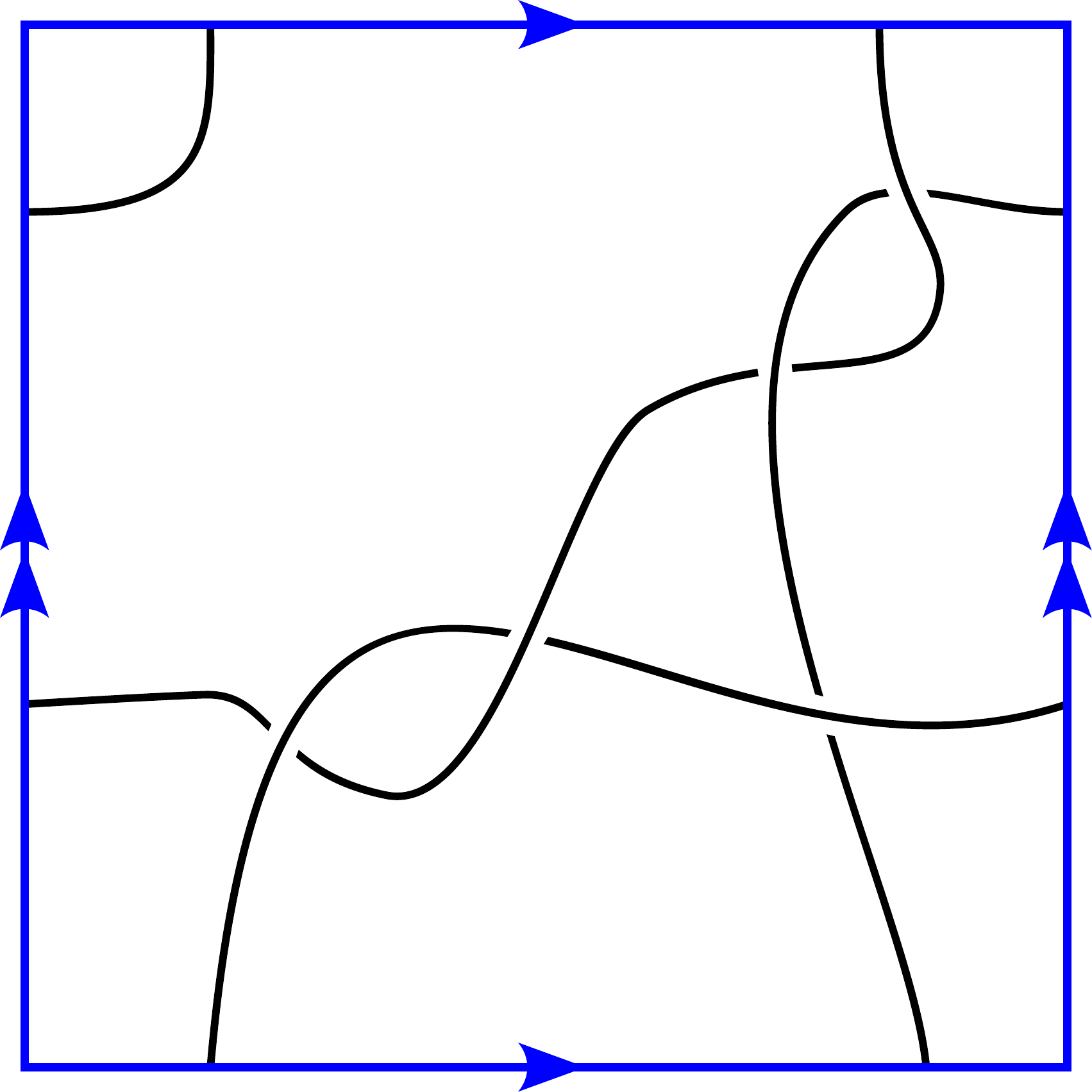
		\caption{$K_{5.2429}$}
		\label{fig:5-2429-gluing}
	\end{subfigure}
    \hspace*{\fill}
	\caption{Two knot diagrams on a torus.}
	\label{fig:surface-knot-diagram-examples}
\end{figure}

A link  $L \subseteq \Si \times I$ is said to be \textit{split} if it can be represented by a disconnected diagram. A link diagram $D \subseteq \Si$ is said to be  \textit{cellularly embedded}, or simply \textit{cellular}, if $\Si \sm D$ is a union of disks. Note that if $D$ is cellularly embedded, then it is necessarily connected.

Virtual links are links in thickened surfaces of any genus modulo the equivalence relation of \textit{stable equivalence}. We define this at the level of surface link diagrams, but it can be stated equivalently for embeddings $L \inject \Si \times I$. 

Intuitively, given a surface link diagram $D \subset\Si$, \textit{stabilisation}  `adds a handle' to $\Si$ which does not interact with $D$. Specifically, if $B_{1}$ and $B_{2}$ are two 2-disks in $\Si$ disjoint from $D$ and each other, we can remove $B_{1}$ and $B_{2}$ from $\Si$, and attach a 1-handle with boundary $\partial B_{1} \sqcup \partial B_{2}$.  The inverse operation is called \textit{destabilisation}, and intuitively it `removes an empty handle' from $\Si$. Specifically, if $\ga \subseteq \Si$ is an essential simple closed curve disjoint from $D$, we can cut the surface $\Si$ along $\ga$ and cap off the two boundary components by attaching 2-disks.

Two links $L \subseteq \Si \times I$ and $L' \subseteq \Si'\times I$ are said to be \textit{stably equivalent} if they are related by a finite sequence of isotopies, homeomorphisms, stabilisations, and destabilisations. A \textit{virtual link} is a stable equivalence class of surface link diagrams. 

A link diagram $D$ in $\Si \times I$ is \textit{alternating} if following each link component along, the crossings alternate over and under. A virtual link $L$ is alternating if it possesses an alternating diagram on some closed surface $\Si$. An alternating diagram is \textit{reduced} if it does not include any \textit{nugatory crossings}. A crossing $c$ of a diagram $D \subseteq \Si$ is nugatory if there exists a simple closed curve $\gamma$ on $\Si$, which intersects the diagram only at $c$. A nugatory crossing is \textit{removable} if $\gamma$ bounds a disc: in this case, the crossing can be removed by un-twisting the part of $D$ contained in the disc. A nugatory crossing is \textit{essential} (non-removable) if the curve $\gamma$ is essential.


\subsection{Checkerboard colourings and Tait graphs}\label{subsec-CCandTaitGraph}
A link diagram $D \subseteq \Si$ is  \textit{checkerboard colourable} if the regions of $\Si \sm D$ can be coloured black and white, such that adjacent regions have opposite colours. A link $L \subseteq \Si \times I$ is \textit{checkerboard colourable} if it admits a checkerboard colourable diagram.

In \cite{Kamada-2002}, Kamada showed that every cellularly embedded alternating link diagram is checkerboard colourable. She further proved that a cellularly embedded link diagram $D \subseteq \Si$ is checkerboard colourable if and only if it can be converted into an alternating diagram by a finite sequence of crossing changes (see Lemma 7, \cite{Kamada-2002}).

In a checkerboard coloured link diagram, each crossing locally appears in one of two ways -- type A or type B -- as in Figure~\ref{fig:crossing-type}. In an alternating diagram, all crossings have the same type. In fact, a link diagram is alternating if and only if it admits a checkerboard colouring so that all crossings have the same type. We adopt the convention whereby alternating link diagrams $D\subseteq \Si$ are checkerboard coloured so that all crossings have type A.

\begin{figure}[hbt]
\centering
\includegraphics[height=24mm]{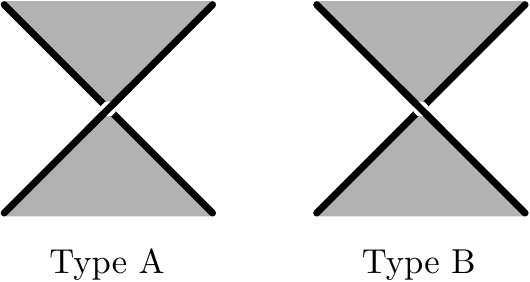} \qquad \qquad \includegraphics[height=24mm]{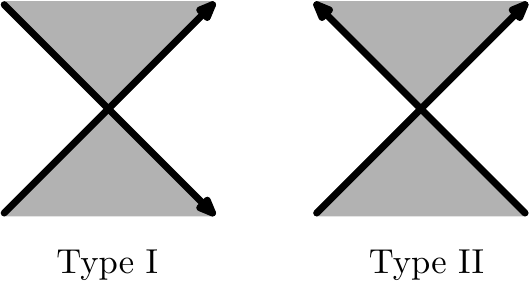} 
\caption{On left, crossings of types A and B. On right, crossings of types I and II.}
\label{fig:crossing-type}
\end{figure}

To any link $L$ in $\Si \times I$ admitting a cellular, checkerboard colourable diagram, we associate two checkerboard surfaces, one black and the other white. We denote them by $F_b$ and $F_w$; they are compact unoriented surfaces in $\Si \times I$ with boundary $L$. The black surface $F_b$ is constructed by joining the black regions with half-twisted bands at each crossing, and the white surface $F_w$ is obtained from the white regions in the same way.

Given a diagram $D\subseteq \Si$ with black and white checkerboard surfaces, the \textit{Tait graphs} are defined as follows. The Tait graph for the black surface is denoted $G_b(D)$ and is the surface graph in $\Si$ with one vertex in each black region and one edge for each crossing. An edge connects two vertices whenever their associated black regions meet at a crossing: see Figure~\ref{fig:5-2429-taits}.  The Tait graph for the white surface is denoted $G_w(D)$ and constructed similarly.  Note that $G_b(D)$ and $G_w(D)$ are deformation retracts of $F_b$ and $F_w$, respectively.  

A graph $G$ embedded in $\Si$ is said to be \textit{cellularly embedded}, or \textit{cellular}, if the regions of $\Si \sm G$ are all disks. If the link diagram $D$ is cellularly embedded and checkerboard coloured, then its Tait graphs $G_b(D)$ and $G_w(D)$ are also cellularly embedded. We will see in Section~\ref{subsec-Flows} that the black and white Tait graphs $G_b(D)$ and $G_w(D)$ are in fact \textit{surface dual} to each other in $\Si$.

\begin{example}\label{ex-Tait} Let $K = 5.{2429}$, and $D$ the surface knot diagram given in Figure~\ref{fig:5-2429-vknot}. The Tait graphs are given abstractly in Figure~\ref{fig:5-2429-taits-abstract}. We will carry $K_{5.2429}$ and the corresponding Tait graphs throughout the paper as a running example.
\hfill $\Diamond$
\end{example}

\begin{figure}[hbt]
	\hspace*{\fill}
	\begin{subfigure}[b]{0.3 \textwidth}
		\centering
		\def\svgwidth{\columnwidth}
\begingroup%
  \makeatletter%
  \providecommand\color[2][]{%
    \errmessage{(Inkscape) Color is used for the text in Inkscape, but the package 'color.sty' is not loaded}%
    \renewcommand\color[2][]{}%
  }%
  \providecommand\transparent[1]{%
    \errmessage{(Inkscape) Transparency is used (non-zero) for the text in Inkscape, but the package 'transparent.sty' is not loaded}%
    \renewcommand\transparent[1]{}%
  }%
  \providecommand\rotatebox[2]{#2}%
  \newcommand*\fsize{\dimexpr\f@size pt\relax}%
  \newcommand*\lineheight[1]{\fontsize{\fsize}{#1\fsize}\selectfont}%
  \ifx\svgwidth\undefined%
    \setlength{\unitlength}{490.33411148bp}%
    \ifx\svgscale\undefined%
      \relax%
    \else%
      \setlength{\unitlength}{\unitlength * \real{\svgscale}}%
    \fi%
  \else%
    \setlength{\unitlength}{\svgwidth}%
  \fi%
  \global\let\svgwidth\undefined%
  \global\let\svgscale\undefined%
  \makeatother%
  \begin{picture}(1,1.00000418)%
    \lineheight{1}%
    \setlength\tabcolsep{0pt}%
    \put(0,0){\includegraphics[width=\unitlength,page=1]{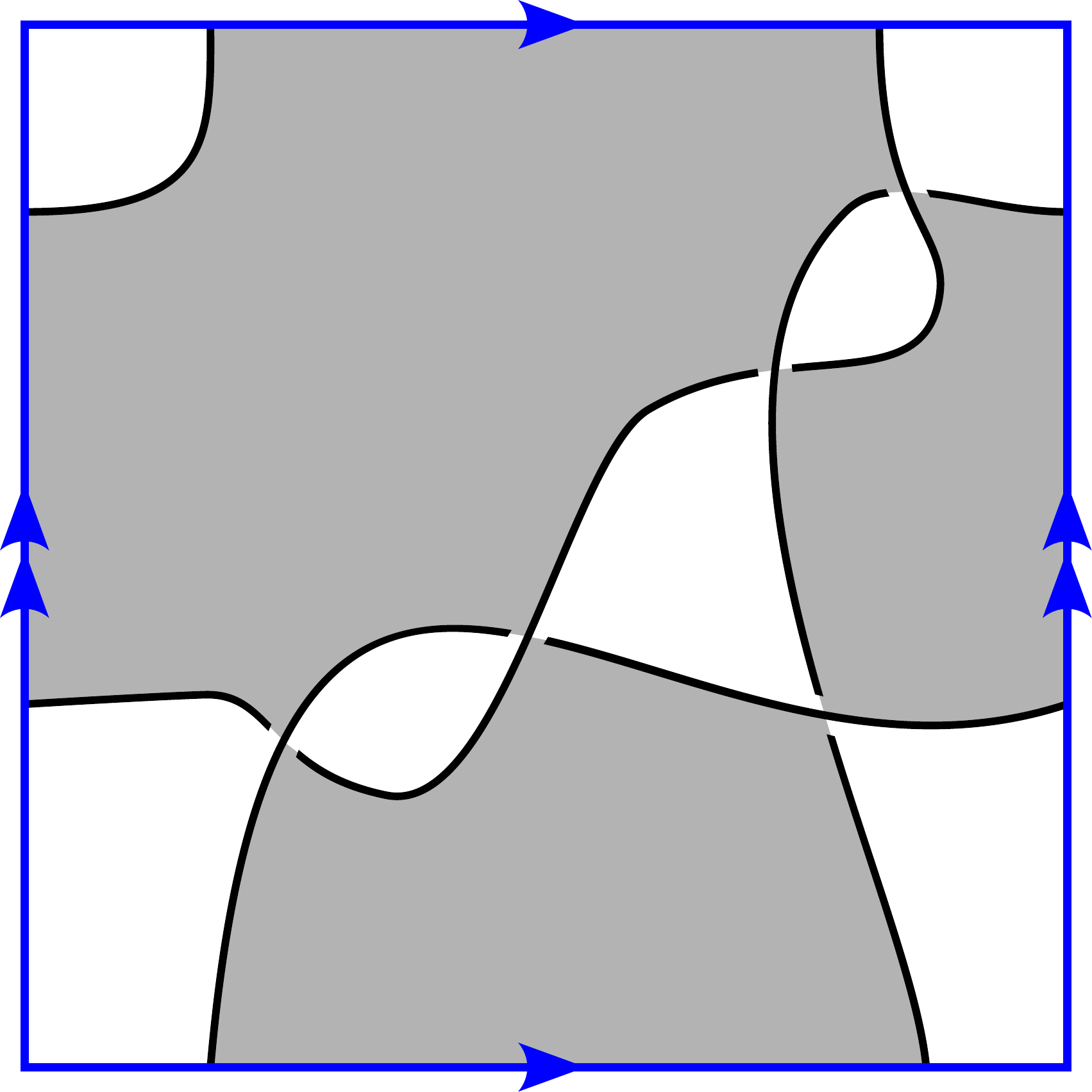}}%
    \put(0.19111203,0.27473241){\makebox(0,0)[lt]{\lineheight{1.25}\smash{\begin{tabular}[t]{l}1\end{tabular}}}}%
    \put(0.51601701,0.41942285){\makebox(0,0)[lt]{\lineheight{1.25}\smash{\begin{tabular}[t]{l}2\end{tabular}}}}%
    \put(0.78926241,0.26542761){\makebox(0,0)[lt]{\lineheight{1.25}\smash{\begin{tabular}[t]{l}3\end{tabular}}}}%
    \put(0.72845606,0.67474697){\makebox(0,0)[lt]{\lineheight{1.25}\smash{\begin{tabular}[t]{l}4\end{tabular}}}}%
    \put(0.83866923,0.83739919){\makebox(0,0)[lt]{\lineheight{1.25}\smash{\begin{tabular}[t]{l}5\end{tabular}}}}%
  \end{picture}%
\endgroup%

		\caption{The virtual knot $K_{5.{2429}}$,}
		\label{fig:5-2429-vknot}
	\end{subfigure}
	\hspace*{\fill}
	\begin{subfigure}[b]{0.3 \textwidth}
		\centering
		\def\svgwidth{\columnwidth}
		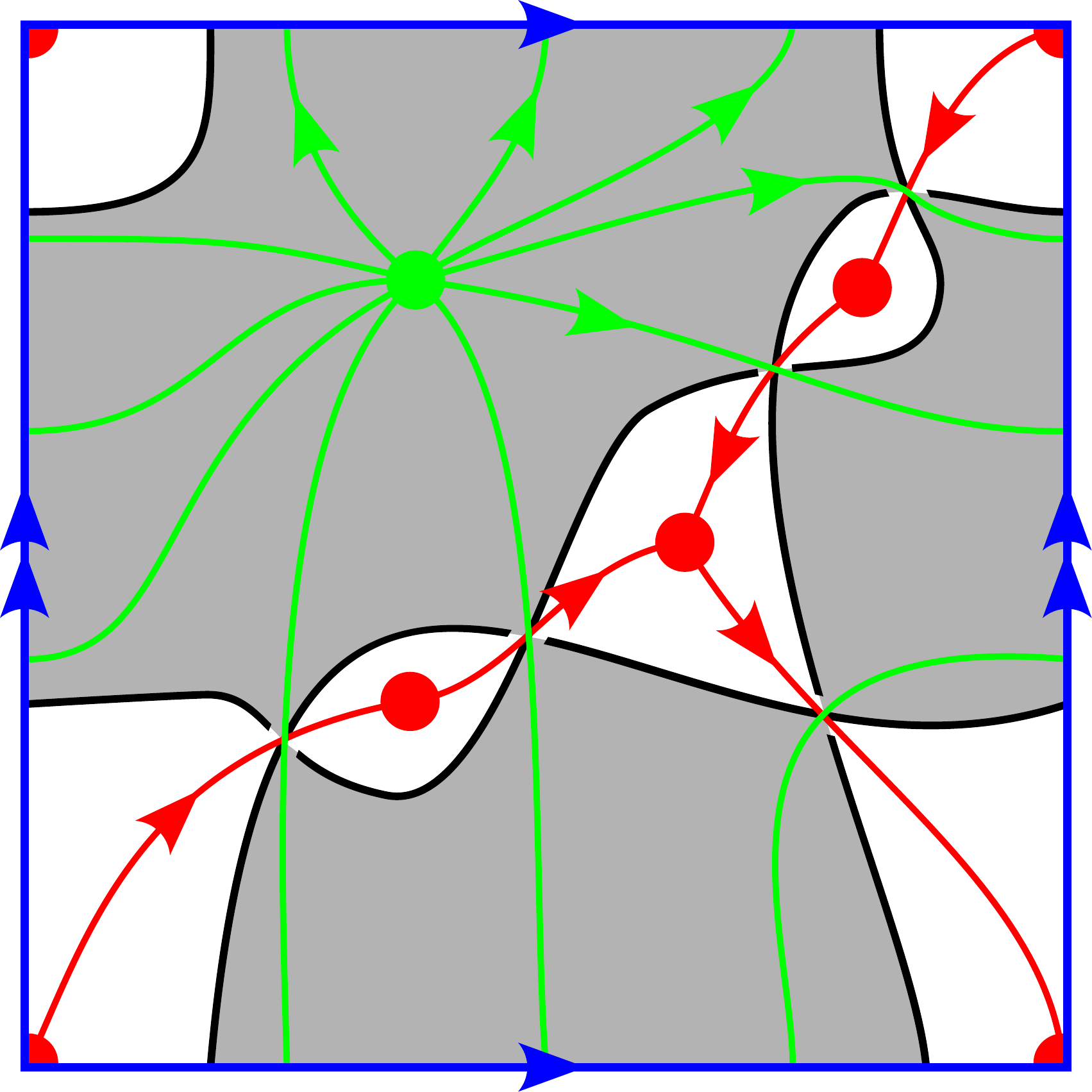
		\caption{\ldots with Tait graphs.}
		\label{fig:5-2429-taits}
	\end{subfigure}
	\hspace*{\fill}
	\caption{The knot $K_{5.2429}$ checkerboard shaded, with black Tait graph shown in green, and white Tait graph shown in red. The edge orientations are explained in Section~\ref{subsec-Flows}.}
	\label{fig:5-2429-example}
\end{figure}

\begin{figure}[hbt]
	\centering
	\def\svgwidth{0.6\columnwidth}
	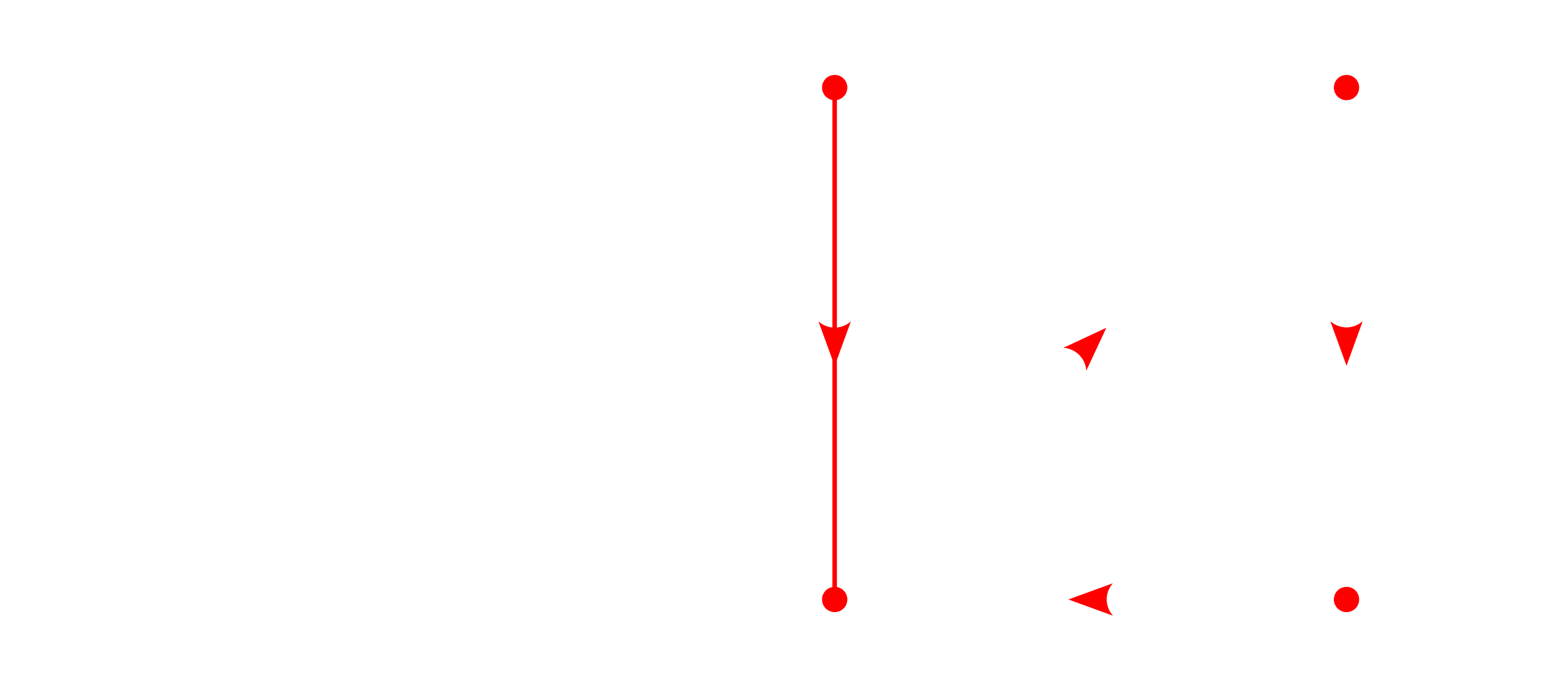
	\caption{The Tait graphs $G_b(D)$ on the left, and $G_w(D)$ on the right, for the diagram in Figure~\ref{fig:5-2429-vknot}, as abstract graphs. } 
	\label{fig:5-2429-taits-abstract}
\end{figure}

\subsection{Mutation}

Conway mutation is an operation on classical links that is generally very difficult to detect. Let $L$ be a link in $S^3$ and $B$ a 3-ball intersecting $L$ in two arcs. Let $S =\partial B$, and assume that $L$ meets $S$ transversely in four points. By cutting along $S$, applying an orientation-preserving involution $\tau$, and regluing, we obtain a new link $L'$, a \textit{mutant} of $L$. If the involution $\tau$ preserves the orientations of the four points $S \cap K$, then $K'$ is said to be a positive mutant.

This operation can be easily visualized on link diagrams. Choose a disc whose boundary intersects the link diagram in four points and perform a $\pi$ rotation of the disc around an axis in the plane of the diagram, or perpendicular to the plane, which preserves the set of four intersection points, see Figure~\ref{fig:kinoshita-terasaka-mutants}. Mutation preserves the checkerboard colouring of a diagram as well as the types (A or B) of the crossings. In particular, the mutant of an alternating diagram is again alternating. 

\begin{figure}
	\centering
	\hspace*{\fill}
	\begin{subfigure}[b]{0.4 \textwidth}
		\centering
		\def\svgwidth{\columnwidth}
		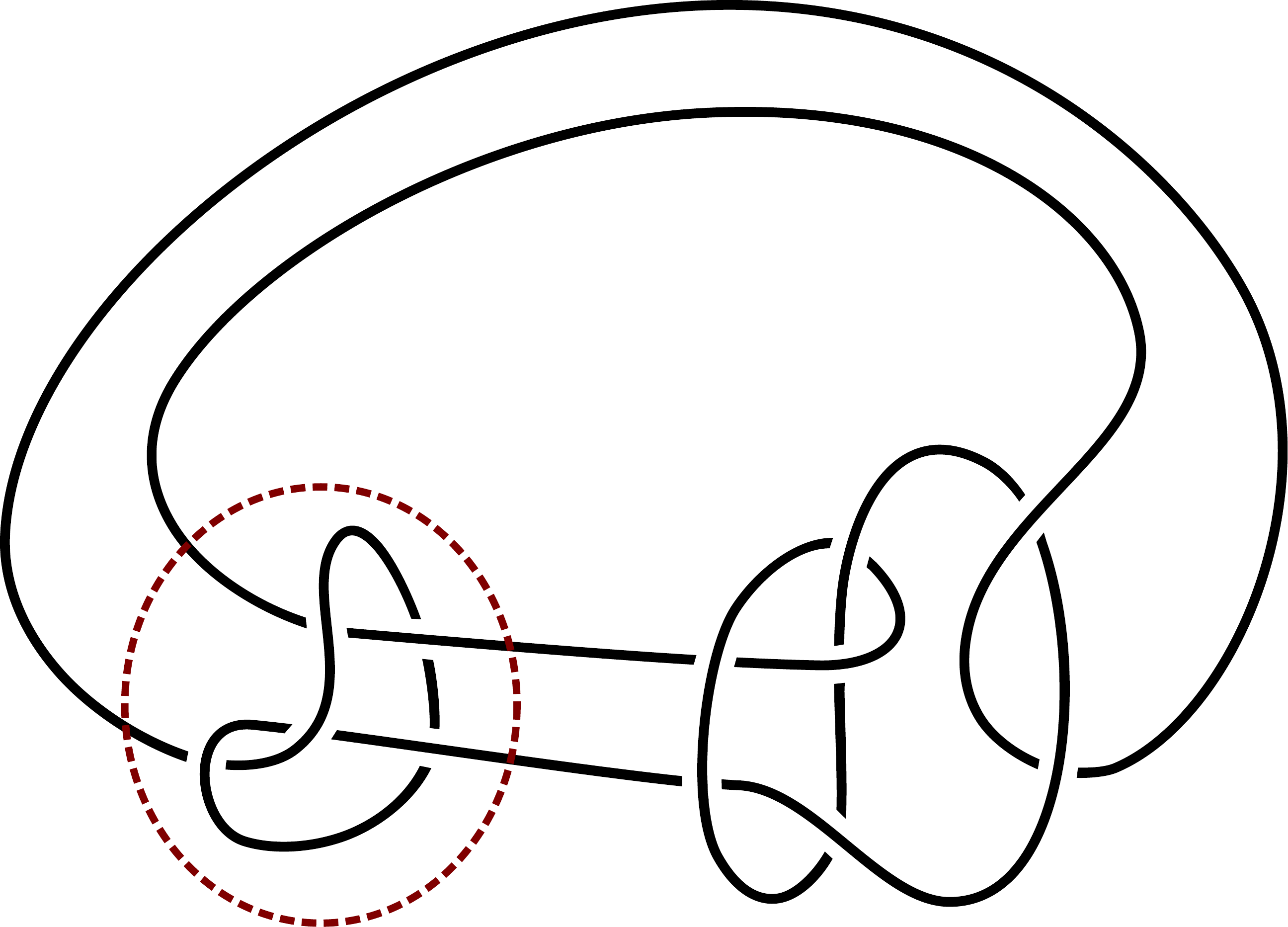
		\caption{The Kinoshita-Terasaka Knot}
		\label{fig:kinoshita-terasaka-knot}
	\end{subfigure}
	\hspace*{\fill} \hspace*{\fill}	\hspace*{\fill}
	\begin{subfigure}[b]{0.4 \textwidth}
		\centering
		\def\svgwidth{\columnwidth}
		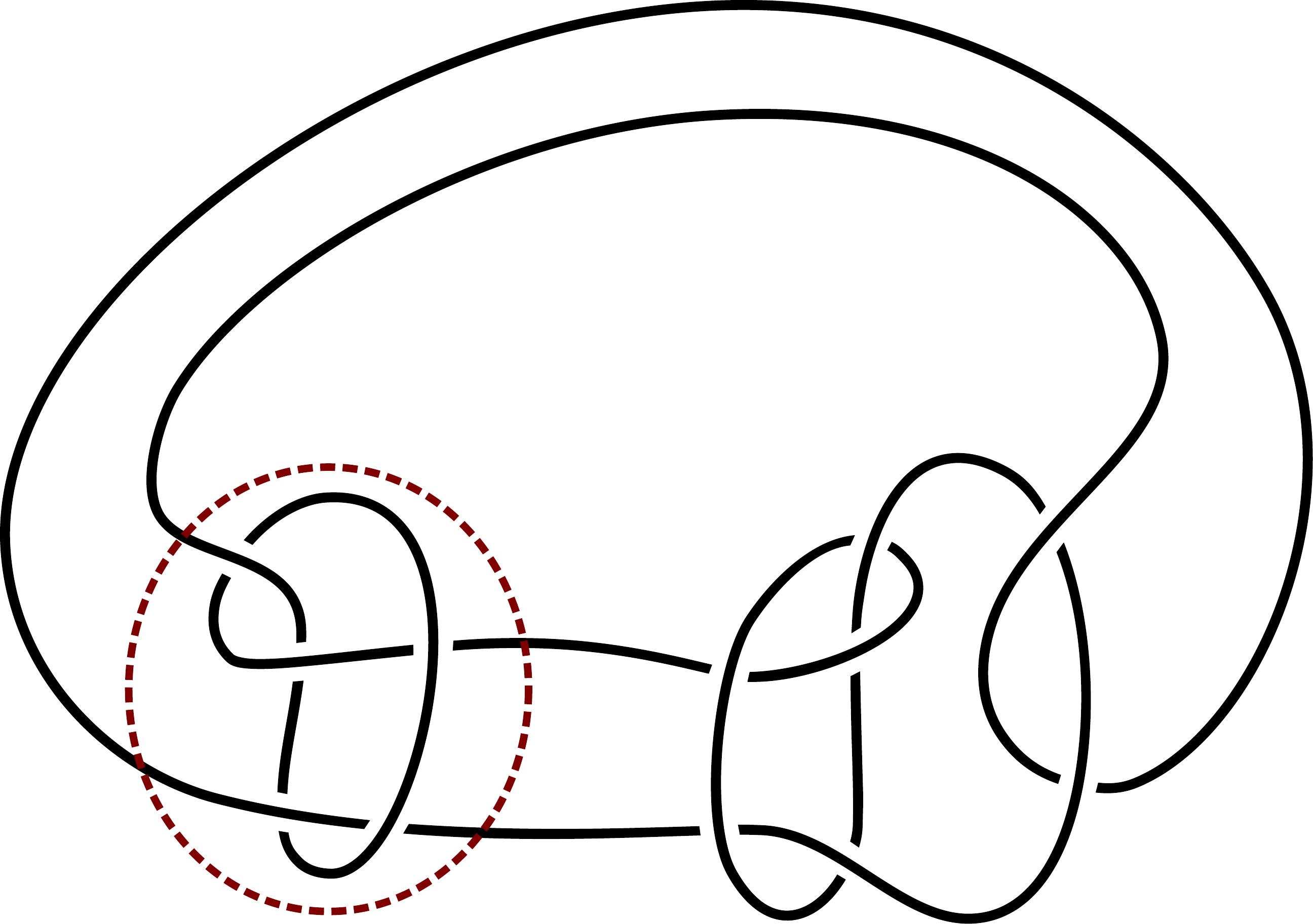
		\caption{The Conway Knot}
		\label{fig:conway-knot}
	\end{subfigure}
	\hspace*{\fill} 
	\caption{The Kinoshita-Terasaka and Conway knots are mutants with trivial Alexander polynomial. 
The KT knot is slice, and the Conway knot is not slice \cite{Piccirillo-2020}. The projections used here are those in \cite[Figure~2.32]{Adams}.}
	\label{fig:kinoshita-terasaka-mutants}
\end{figure}

There are two ways to generalise mutation to links in thickened surfaces: disk mutation and surface mutation. \textit{Disk mutation} is directly analogous to mutation of classical links: choose a disk $B \subseteq \Si$ whose boundary intersects the link diagram in four points, and rotate (flip) it by $\pi$ in the same way. The resulting link is a disk mutant of the original. 

For \textit{surface mutation}, the chosen subsurface still needs to have boundary which intersects the link diagram in four points, but it could have multiple boundary components, or contain handles. This invasive notion of mutation does not necessarily preserve the alternating property: see Example~\ref{example-AnnularMutation}. 
In this paper we always consider disc mutation, unless otherwise stated.

\begin{example}\label{example-AnnularMutation}
\begin{figure}  
\centering
\includegraphics[height=22mm]{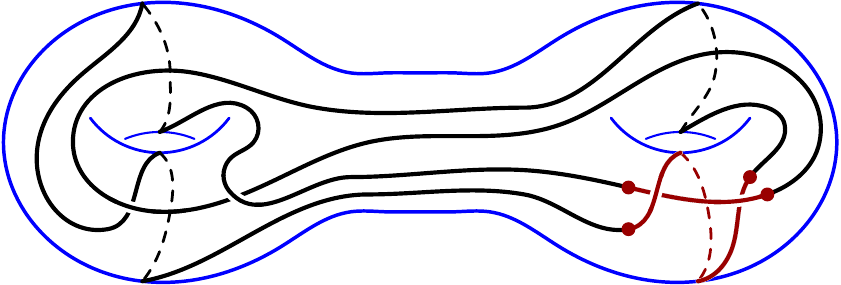} \quad
\includegraphics[height=22mm]{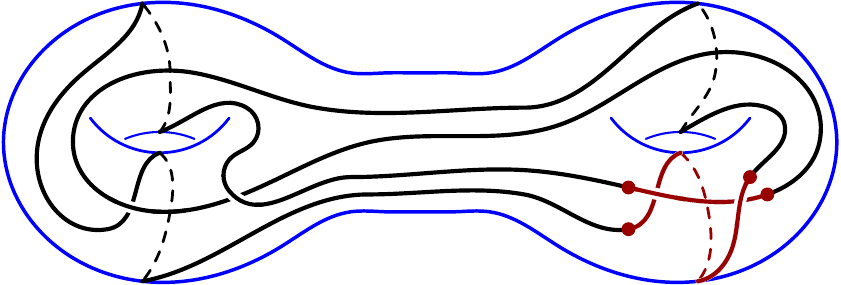} 
\caption{Surface mutation of knots in a genus two surface, with mutant region shown in red. We would like to thank Homayun Karimi for sharing these figures.} \label{fig-mutation}
\end{figure}
Figure~\ref{fig-mutation} is an example of surface mutation of a knot in the closed surface of genus 2. The mutation is carried out by cutting out an annulus, flipping it so as exchanging the two boundary components, and regluing. The part of the knot diagram that is affected is shown in red.  
The knot on the left of Figure~\ref{fig-mutation} is alternating whereas the one on the right is not. Thus, surface mutation does not preserve the property that the diagram is alternating. 

The two knots can be distinguished using knot signatures. The knot on the left has knot signatures $\{2,-2\}$ (computed in terms of the two checkerboard surfaces). The knot on the right has knot signatures $\{0,0\}$. Note that the knot on the left is non-slice (by Theorem 3.9, \cite{Boden-Karimi-2021}), whereas the knot on the right is slice. We leave it as an exercise to find saddle moves to slice this knot. 
Therefore, surface mutation does not preserve the concordance class of the underlying knot.
\hfill $\Diamond$
\end{example}


\section{Lattices}  \label{section-Lattices}

In this section, we generalise some results about the lattice of integer flows to graphs in surfaces. As in the classical case, the lattices of integer flows of the Tait graphs are invariants of alternating links in surfaces. 

The lattice of integer flows -- whose definition is reviewed below in Section~\ref{subsec-Flows} -- is an invariant of abstract graphs, thus it can be applied for surface Tait graphs with no change from the classical setting. This also enables the definition of $d$-invariants for surface links in Section~\ref{subsec-dInv}. 

The key departure from the classical setting is that the black and white Tait graphs are no longer planar duals, but rather surface duals. This makes the relationship between their lattices of integer flows and $d$-invariants more intricate.


\subsection{Integer lattices}\label{subsec-Lattice}
A \textit{lattice} $\La$ is a finitely generated free abelian group with a nondegenerate symmetric bilinear form $\langle \; ,\, \rangle \co \La \times \La \to \QQ$ . The form is  said to be \textit{integral} if $\langle x,y\rangle \in \ZZ$ for all $x,y \in \La$, and it is said to be \textit{even} if $\langle x,y\rangle \in 2\ZZ$ for all $x,y \in \La$. If the form is integral, then the lattice $\La$ is called an \textit{integer lattice}.

Two lattices $\La$ and $\La'$ are said to be \textit{isometric} if there is an isomorphism $\varphi\co \La \to \La'$ of groups such that
$\langle x,y\rangle = \langle \varphi(x),\varphi(y)\rangle'$ for all $x,y \in \La.$

Given a basis $\be=\{b_1,\ldots,b_n\}$ for $\La$,  the \textit{pairing matrix} is denoted $M_{\La,\be}$ and defined to be $(M_{\La,\be})_{i,j}=\langle b_i, b_j \rangle$. If $\be'$ is a different basis for $\La$, then the new pairing matrix is $M_{\La,\be'}=P^\tr M_{\La,\be}P$, where $P$ is the change of basis matrix from $\be$ to $\be'$ and $P^\tr$ is its transpose. Note that $P$ is an integer matrix with determinant 1, so the pairing matrices are unimodular congruent.

Recall that two integer matrices $M$ and $N$ are said to be \textit{unimodular congruent} if there exists an integer matrix $P$ of determinant 1, such that $N=P^\tr MP$. There is a one-to-one correspondence between isometry classes of integer lattices and unimodular congruence classes of nondegenerate symmetric integer matrices.

The dual lattice of $\La$ is the (rational) lattice 
$$\La^* = \{ x \in \La \otimes \QQ \mid \langle x,y\rangle \in \ZZ \text{ for all } y \in \La \}.$$ 
The \textit{discriminant group} of $\La$ is the quotient
$\wbar{\La} = \La^*/ \La.$ This is a finite group of order $\det(M_{\La,\be})$, hence this determinant is also called the \textit{discriminant} of $\La$. For $x \in \La^*$, let
$|x|=\langle x,x \rangle$ denote the norm (squared) of $x$, and $\wbar{x}$ its image in $\wbar{\La}$.

\subsection{Cuts, flows, and surface duality} \label{subsec-Flows}
The lattices of integer cuts and flows are lattice-valued invariants associated to graphs. We begin with a short review of some of the necessary graph theory terminology.

A connected graph $G$ is \textit{two-edge-connected} if it remains connected after removing any one edge. 
A \textit{2-isomorphism} of graphs $G$ and $G'$ is a cycle-preserving bijection of the edge sets $E(G)$ and $E(G')$. The graphs $G$ and $G'$ are called \textit{2-isomorphic} if a 2-isomorphism exists. We note that for two-edge-connected graphs, the notion of 2-isomorphism is equivalent to isomorphism of the associated cycle matroids.

To define the lattices of integer flows and cuts for a graph $G$, fix an arbitrary orientation $\omega_0$ of $G$. This makes $G$ into a one-dimensional CW-complex, with cellular chain complex
$$0 \to C_1(G; \ZZ) \stackrel{\partial}{\lto} C_0(G;\ZZ) \to 0, $$
where $\partial(e)= \operatorname{beg}(e)-\operatorname{end}(e)$, the difference of the beginning and ending vertices of $e$.
View the first chain group $C_1(G;\ZZ) = \ZZ^{E(G)}$ as an integer lattice with Euclidean inner product, by declaring the edges of $G$ to be orthonormal: $\langle e_i, e_j\rangle=\delta_{ij}$. 

The adjoint map $\partial^*\co C_0(G; \ZZ) \to C_1(G; \ZZ)$ assigns to each vertex $v$ the difference of the outgoing and incoming edges incident to $v$:
$$\partial^*(v)= \sum_{e\in \Out(v)} e - \sum_{e'\in \In(v)} e'= \sum_{e\in E(G)} \langle \partial(e),v\rangle \cdot e.$$

\begin{definition} The lattice of integer flows $\cF(G)$ of $G$ is the free abelian group $$\flows(G)=\ker(\partial)=H_1(G;\ZZ),$$
with the inner product inherited from $C_1(G;\ZZ)$.

The lattice of integer cuts $\cC(G)$ of $G$ is the image of $\partial^* \co C_0(G; \ZZ) \to C_1(G; \ZZ)$, 
again with the inner product inherited from $C_1(G;\ZZ)$. 
\end{definition}

Note that while the definitions of $\flows(G)$ and $\cuts(G)$ depend on the choice of orientation $\omega_0$, their isometry types as lattices are independent of this choice. This explains why this choice is not reflected in the notation. 

Given a cycle (closed walk) in $G$, the signed sum of the edges of the walk -- positive where the walk direction agrees with the edge orientation, and negative otherwise -- is an integer flow in $\flows(G)$. Given a partition of the vertex set of $G$, $V_G=V_1 \sqcup V_2$, the signed sum of edges connecting $V_1$ and $V_2$ -- positive when oriented $V_1 \to V_2$, negative otherwise -- is an integer cut in $\cuts(G)$.

It has long been known \cite{Backer-et-al, Watkins-1990} that $\cF(G)$ and $\cC(G)$ are invariants of the 2-isomorphism type of the graph $G$. The converse was proved in \cite{Watkins-1994} using Laplacian matrices, then later by Caporaso and Viviani \cite{Caporaso-Viviani} in the context of tropical curves, and Su and Wagner \cite{Su-Wagner} in the context of regular matroids. The result is widely known as the \textit{Discrete Torelli Theorem}, after an analogous theorem in algebraic geometry:

\begin{theorem}[Discrete Torelli Theorem]\label{thm-DiscTorelli}
The lattice of integer flows $\cF(G)$ of a two-edge-connected graph $G$ uniquely determines the 2-isomorphism type of $G$. The same is true for the lattice of integer cuts $\cC(G)$ of a loopless graph.
\end{theorem}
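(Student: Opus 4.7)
The plan is to reduce the statement, via Whitney's classical 2-isomorphism theorem, to the claim that $\cF(G)$ determines the \emph{cycle matroid} $M(G)$: Whitney proved that two-edge-connected graphs are 2-isomorphic if and only if their cycle matroids are isomorphic, so it suffices to reconstruct $M(G)$ from the flow lattice.

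First I would identify a distinguished set of vectors in $\cF(G)$ that encode the simple cycles of $G$. Under the natural embedding $\cF(G) \hookrightarrow C_1(G; \ZZ) = \ZZ^{E(G)}$, a simple cycle $C$ of length $k$ corresponds to a vector $x_C$ of norm $k$ whose coordinates lie in $\{-1,0,+1\}$ and whose support has size exactly $k$. The collection of supports of these vectors is precisely the set of circuits of $M(G)$, so the strategy is to characterise these primitive circuit vectors intrinsically from the isometry type of $\cF(G)$ and then read off the matroidal data.

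The hard part -- and the reason the theorem is nontrivial -- is extracting the circuit vectors from the abstract lattice $\cF(G)$ without being handed the coordinate basis $E(G)$ of the ambient $\ZZ^{E(G)}$ a priori. A first attempt is to take the shortest nonzero vectors of $\cF(G)$: vectors whose norm equals the girth of $G$ must be primitive circuit vectors, and one could try to induct upwards, recovering longer circuits by demanding that every element of $\cF(G)$ decompose as an integer combination of circuit vectors. The obstacle is that $\cF(G)$ generally contains many short vectors which are \emph{not} circuit vectors -- sums of cycle vectors with partial cancellation, for instance -- and disentangling them requires structural input beyond the norm. This is where I expect the main difficulty, and where I would invoke heavier machinery: the approach of Su--Wagner leverages Seymour's decomposition theorem for regular matroids to show that $\cF(G)$ determines the underlying regular matroid, while the tropical Torelli route of Caporaso--Viviani views $\cF(G)$ as a polarised tropical Jacobian and classifies these combinatorially.

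For the second statement, the same strategy applied to the cographic matroid $M^*(G)$ handles the cut lattice: $\cC(G)$ is the flow lattice of $M^*(G)$, and the looplessness of $G$ corresponds to $M^*(G)$ having no coloops, which is the hypothesis needed to make the Whitney reconstruction of $G$ from $M^*(G)$ unambiguous.
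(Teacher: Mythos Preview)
The paper does not actually prove this theorem. It is stated as a known result from the literature, with the preceding paragraph attributing it to Watkins (via Laplacian matrices), Caporaso--Viviani (tropical Torelli), and Su--Wagner (regular matroids). There is no ``paper's own proof'' to compare your proposal against.

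Your sketch is a reasonable outline of the landscape: the reduction to recovering the cycle matroid via Whitney is the right framing, and you correctly identify both the natural first attempt (characterise circuit vectors intrinsically) and the obstruction (short non-circuit vectors arising from cancellation). You then defer to exactly the same sources the paper cites. That is appropriate, but note that what you have written is not itself a proof --- it is a description of where the difficulty lies together with pointers to the literature that resolves it. If you intend this to stand as a proof, you would need to actually carry out one of those arguments; as written, it is closer to an annotated citation than a proof.

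One small correction on the cut side: looplessness of $G$ is the hypothesis ensuring $\cC(G)$ has no redundancy from loops (which contribute nothing to cuts), but the matroidal reconstruction of $G$ from $M^*(G)$ requires that $G$ be \emph{2-connected} (or at least that one works up to 2-isomorphism), not merely that $M^*(G)$ be coloop-free. The looplessness hypothesis in the statement is there because loops are invisible to $\cC(G)$, so one cannot hope to recover them; it is the analogue of excluding bridges in the flow case, not a substitute for the connectivity hypothesis in Whitney's theorem.
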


Next, we turn our attention to lattices of integer flows and cuts associated to Tait graphs. For convenience, we shorten $\cF(G_b(D))$ and $\cC(G_b(D))$ to $\cF_b(D)$ and $\cC_b(D)$, respectively, and similarly for the white graphs. We recall the definition and basic properties of surface graphs, which can be found, for example, in \cite[Chapter 4.1]{Mohar-Thomassen}.

Let $\Si$ be a closed, oriented surface and $G \stackrel{\iota}{\hookrightarrow} \Si$ a cellularly embedded connected graph, that is, $\Si \sm G$ is a disjoint union of discs. Surface embeddings are considered up to orientation preserving homeomorphism of the pair. The data of a surface embedding for $G$ is equivalent to a choice of cyclic orderings for the edges at each vertex of $G$; such data is also known as a \textit{fat graph}, a \textit{ribbon graph}, or a \textit{rotation system} \cite[Chapter 3.2]{Mohar-Thomassen}. 

The \textit{surface dual} of a cellular surface graph $G \stackrel{\iota}{\hookrightarrow} \Si$ is the graph $G^* \stackrel{j}{\hookrightarrow} \Si$ with one vertex for every disc of $\Si \sm G$, and for each edge $e$ of $G$, a dual edge $e^*$ connecting the vertices lying in the discs on either side of $e$. Note that if $e$ borders the same disc on both sides (that is, if $e$ is a \textit{bridge}), then the dual edge $e^*$ is a loop.

\begin{lemma}\label{lem:face-vertex}
There is a unique vertex of $G$ in each connected component of $\Si \sm G^*$.
\end{lemma}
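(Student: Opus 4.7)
The plan is to produce, around each vertex $v$ of $G$, a small disc in $\Sigma \sm G^*$ containing $v$ and no other vertex of $G$, and then argue that these discs exhaust all components of $\Sigma \sm G^*$.

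First, I would fix $v \in V(G)$ and let $e_1, \dots, e_d$ be the edges incident to $v$, cyclically ordered by the embedding. Consecutive edges $e_i$ and $e_{i+1}$ share a corner at $v$ lying in some face $f_i$ of $G$, which contains a dual vertex $v_{f_i}$. Inside a small disc $D_v$ around $v$, I would isotope $G^*$ so that each dual edge $e_i^*$ appears only as a short transverse arc crossing $e_i$ near $\partial D_v$. These arcs, linked by short paths through the sectors $f_i$ near $\partial D_v$, concatenate into a closed loop $\gamma_v \subseteq G^*$ encircling $v$. For $D_v$ small enough, $\gamma_v$ bounds a disc $B_v \subseteq D_v$ with $v$ in its interior, disjoint from $V(G) \sm \{v\}$ and meeting $G^*$ only along $\gamma_v$. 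Thus $\operatorname{int}(B_v)$ is a connected component of $\Sigma \sm G^*$ containing exactly one vertex of $G$, namely $v$.

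To show that every component of $\Sigma \sm G^*$ is of the form $\operatorname{int}(B_v)$, I would argue that every $p \in \Sigma \sm G^*$ is joined through $\Sigma \sm G^*$ to some vertex of $G$. If $p$ lies on an edge $e$ of $G$, slide along $e$, past the single crossing with $e^*$, to an endpoint of $e$. If $p$ lies in a face $f$ of $G$, then $p$ sits in one of the sectors into which the ``dual star'' at $v_f$ inside $f$ partitions $f$, and the boundary of this sector along $\partial f$ contains a corner vertex of $G$, reachable via $\partial f$ and an edge of $G$. Hence every component of $\Sigma \sm G^*$ contains some vertex of $G$, and by the first part it contains only that one vertex.

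The main obstacle is verifying that the local picture near $v$ really does close up — that the cyclic order of the edges $e_i$ at $v$ is compatible with the routing of the dual edges $e_i^*$ among the $v_{f_i}$ so that they assemble into a loop $\gamma_v$. This is the core combinatorial content of the lemma and amounts to the standard duality between a cellular embedding and its dual at the level of rotation systems: the rotation at $v_{f_i}$ inherited from traversing $\partial f_i$ dualizes the rotation at $v$, and face-tracing in $G^*$ applied at $v$ recovers precisely the sequence $e_1^*, \dots, e_d^*$.
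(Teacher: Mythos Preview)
Your strategy matches the paper's: for each vertex $v$, exhibit the cycle of dual edges around $v$ as a separator, and for the converse, connect any point of $\Sigma\sm G^*$ to a vertex of $G$. However, there is a genuine gap in the first half.

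You assert that the arcs of the $e_i^*$ inside $D_v$, ``linked by short paths through the sectors $f_i$'', form a simple closed curve $\gamma_v\subseteq G^*$ bounding a disc $B_v\subseteq D_v$ whose interior is a full connected component of $\Sigma\sm G^*$. This fails whenever a single face $f$ of $G$ meets $v$ at more than one corner, i.e.\ when $f_i=f_j$ for some $i\neq j$. In that case the dual vertex $v_f$ occurs repeatedly in the closed walk $v_{f_0}\!\xrightarrow{e_1^*}\!v_{f_1}\!\to\cdots\to v_{f_{d-1}}\!\xrightarrow{e_d^*}\!v_{f_0}$, so this walk is not a simple closed curve and cannot be realised as an embedded loop $\gamma_v$ inside the small disc $D_v$. (A concrete instance: one vertex with two loops embedded as meridian and longitude of a torus.) More structurally, your claim that $\operatorname{int}(B_v)$ is a disc and a full component would simultaneously prove Lemma~\ref{lem:dualcell}, which the paper deduces \emph{from} the present lemma via an Euler-characteristic count; you are implicitly assuming part of what is to be shown.

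The paper's proof avoids this by making only the weaker claim you actually need: the closed walk of dual edges around $v$ \emph{separates} $v$ from every other vertex. No embeddedness or disc-bounding is asserted. Your second direction (sliding along edges, then using the sectors cut out by the dual star in each face to reach a corner vertex on $\partial f$) is correct and somewhat more explicit than the paper's version, which instead notes that the edges of $G$ dual to the boundary facial walk of a component must terminate inside that component.
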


\begin{proof}
Let $v$ denote an arbitrary vertex of $G$. The dual edges to the edges of $G$ incident to $v$ form a cycle in $G^*$ which separates $v$ from all other vertices. 

Conversely, each connected component of $G^*$ has a cycle of edges, called a \textit{facial walk}, as its boundary. By construction, the edges of $G$ dual to the edges of this cycle must terminate within the component, therefore, the component must contain a vertex of $G$.
\end{proof}

\begin{lemma}\label{lem:dualcell}
If $G\hookrightarrow \Si$ is cellularly embedded, then so is $G^*$.
\end{lemma}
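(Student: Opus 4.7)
The plan is to produce each connected component of $\Si \sm G^*$ as an open disk, by exhibiting it as the union of the ``corner'' 2-cells around a single vertex of $G$. First I would refine the embedding: draw $G^*$ in $\Si$ so that each dual edge $e^*$ meets only the corresponding edge $e$ of $G$, transversely in a single interior point. Then $G \cup G^* \subset \Si$ is a graph whose vertex set consists of $V(G) \sqcup V(G^*)$ together with one crossing point per edge of $G$. Each face of $G$ is subdivided by the half-edges of $G^*$ in it into regions bounded by four arcs meeting at four corners: a vertex of $G$, a crossing, a vertex of $G^*$, and a second crossing. Because $G$ is cellularly embedded, each such ``corner region'' is an open disk. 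This gives a cellular decomposition of $\Si$ refining both $G$ and $G^*$.

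Next, I fix a vertex $v \in V(G)$ of degree $d$ with edges $e_1, \dots, e_d$ in their cyclic order induced by the surface embedding, and let $U_v$ be the union of $\{v\}$, the half-edges of $G$ at $v$ (from $v$ up to, but not including, the crossing on $e_i$), and the interiors of the $d$ corner 2-cells at $v$. By construction, $U_v$ is open in $\Si$, is disjoint from $G^*$, and its frontier in $\Si$ is precisely the cycle of half-$G^*$-edges dual to $e_1, \dots, e_d$, i.e.\ the facial walk of $G^*$ around $v$ guaranteed by the proof of Lemma~\ref{lem:face-vertex}. Consequently $U_v$ is also closed in $\Si \sm G^*$, and being connected, it equals the component of $\Si \sm G^*$ containing $v$.

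The remaining step is to show $U_v$ is a disk. The $d$ corner 2-cells at $v$ are parametrized (via the cyclic order at $v$) as quadrilaterals $Q_1, \dots, Q_d$ glued in cyclic order along the $d$ half-edges of $G$ at $v$, all sharing the common corner $v$. I would build an explicit homeomorphism from the closed unit disk to $\overline{U_v}$ by sending $0 \mapsto v$, radially sending each angular sector $[(i{-}1)/d, i/d]$ of the disk to $\overline{Q_i}$ using a homeomorphism of the standard quadrilateral that respects the cyclic identifications; compatibility along the shared half-edges of $G$ is ensured because the rotation system at $v$ is exactly the data we used to list $e_1, \dots, e_d$ in order. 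This shows $U_v$ is an open disk, so every component of $\Si \sm G^*$ is a disk and $G^*$ is cellularly embedded.

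The main technical point is the last step: even though the picture at $v$ is unambiguous, rigorously constructing the homeomorphism requires invoking the rotation-system description of the surface embedding to conclude that the $d$ corners glue cyclically around $v$ without twists or self-identifications. Lemma~\ref{lem:face-vertex} provides the essential global input — namely that no second vertex of $G$ can intrude into $U_v$ — so the local disk model at $v$ really does describe the whole face of $G^*$.
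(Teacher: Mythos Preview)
Your argument is correct and takes a genuinely different route from the paper's. The paper gives a two-line Euler characteristic count: since $G$ is cellular, $\chi(\Si)=|V_G|-|E_G|+|F_G|$; on the other hand $\chi(\Si)\geq |V_{G^*}|-|E_{G^*}|+|F_{G^*}|$ with equality if and only if $G^*$ is cellular; and the three bijections $|E_G|=|E_{G^*}|$, $|F_G|=|V_{G^*}|$, $|V_G|=|F_{G^*}|$ (the last from Lemma~\ref{lem:face-vertex}) force equality. By contrast, you build each face of $G^*$ explicitly as the disk $U_v$ obtained by gluing the corner quadrilaterals of the refined decomposition $G\cup G^*$ cyclically around a vertex $v$ of $G$, using the rotation system at $v$ to control the gluing pattern. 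Your approach is longer and has more moving parts (the corner-cell refinement, the open--closed argument for $U_v$, the explicit sector-to-quadrilateral homeomorphism), but it yields strictly more: an explicit identification of the faces of $G^*$ with the vertex stars of $G$, which is exactly the content one would want when later relating $\cF^\circ(G)$ to $\cC(G^*)$. The paper's argument is slicker but non-constructive, and it quietly uses the fact that a compact connected surface with nonempty boundary has Euler characteristic at most $1$, with equality only for the disk. One small point: your final paragraph slightly misstates the role of Lemma~\ref{lem:face-vertex}. That no second vertex lies in $U_v$ is already clear from your construction; what the lemma (or equivalently the observation that the corner cells, half-$G$-edges, and $G$-vertices cover $\Si\sm G^*$) actually supplies is that the $U_v$ exhaust all components of $\Si\sm G^*$.
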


\begin{proof}
Let $V_G$, $E_G$, and  $F_G$ denote the sets of vertices, edges, and faces of $G$, respectively. Similarly, let $V_{G^*}$, $E_{G^*}$ denote the sets of vertices and edges of $G^*$, and let $F_{G^*}$ denote the regions of $\Si \sm G^*$. 

Since $G$ is cellularly embedded in $\Si$, the faces in $F_G$ are all discs. It follows that the Euler characteristic of $\Si$ is given by $\chi(\Si)=|V_G|-|E_G|+|F_G|$. 
On the other hand, $\chi(\Si)\geq |V_{G^*}| - |E_{G^*}| + |F_{G^*}|$, with equality if and only if $G^*$ is cellularly embedded. 
By definition, $|E_G|=|E_{G^*}|$ and $|F_G|=|V_{G^*}|$. By Lemma~\ref{lem:face-vertex}, $|V_G|=|F_{G^*}|$. Thus, equality holds, and it follows that $G^*$ is cellularly embedded.
\end{proof}

\begin{corollary}
If $G\hookrightarrow \Si$ is cellularly embedded, then $G^{**}=G$.  
\end{corollary}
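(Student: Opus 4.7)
The plan is to construct an explicit isomorphism of surface graphs $G \to G^{**}$ using the bijections provided by the two preceding lemmas. Since Lemma~\ref{lem:dualcell} guarantees that $G^*$ is cellularly embedded in $\Si$, the second dual $G^{**}$ is defined, and its vertices correspond to the connected components of $\Si \sm G^*$. Lemma~\ref{lem:face-vertex} supplies a canonical bijection between these components and the vertex set $V_G$: send each face of $\Si \sm G^*$ to the unique vertex of $G$ it contains. This will be our map on vertices.

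Next I would define the map on edges. By construction, dualization provides a natural bijection $E_G \to E_{G^*}$ sending $e \mapsto e^*$, and a second application yields a bijection $E_{G^*} \to E_{G^{**}}$ sending $e^* \mapsto e^{**}$. Composing gives $e \mapsto e^{**}$. The only thing to verify is compatibility with incidence: the dual edge $e^{**}$ joins the two vertices of $G^{**}$ lying in the two faces of $\Si \sm G^*$ adjacent to $e^*$. Under the vertex bijection from Lemma~\ref{lem:face-vertex}, those two faces correspond to the two vertices of $G$ that lie in them, and these are precisely the endpoints of $e$ (since $e$ and $e^*$ cross transversally in a single point, and so $e$ connects the vertices of $G$ sitting in the two sides of $e^*$). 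Thus incidence is preserved.

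To upgrade this abstract graph isomorphism to an equality of embedded surface graphs, I would observe that the bijection can be realized by an isotopy of $\Si$: each vertex of $G^{**}$ can be slid within its face of $\Si \sm G^*$ to coincide with the unique vertex of $G$ in that face, and each arc $e^{**}$ can then be isotoped (rel its endpoints) to coincide with $e$, since both cross $e^*$ in a single transverse point and lie in the union of the two faces of $\Si \sm G^*$ adjacent to $e^*$. Equivalently, in terms of rotation systems, the cyclic order of edges around a vertex of $G^{**}$ (read off from the embedding in $\Si$) agrees with the cyclic order of edges around the corresponding vertex of $G$, since both are dictated by the same cyclic arrangement of faces of $\Si \sm G^*$ around that vertex.

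The only potentially subtle point is the last step, ensuring that the identification is not merely an isomorphism of abstract graphs but actually respects the embedding in $\Si$ up to the equivalence under which surface duals are defined. I expect this to be routine once the rotation-system viewpoint is invoked, because the rotation system of $G^{**}$ at a vertex $v \in V_G$ is read off from the facial walk of the corresponding region of $\Si \sm G^*$ around $v$, which is exactly the cyclic order of edges of $G$ at $v$.
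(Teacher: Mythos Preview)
Your proposal is correct and is exactly the argument the paper has in mind: the corollary is stated in the paper without proof, immediately after Lemmas~\ref{lem:face-vertex} and~\ref{lem:dualcell}, so the intended reasoning is precisely the one you spell out---use Lemma~\ref{lem:dualcell} to ensure $G^{**}$ is defined, use Lemma~\ref{lem:face-vertex} for the vertex bijection, use $e\mapsto e^{**}$ for the edge bijection, and check that the rotation systems agree.
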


The orientation $\omega_0$ of $G$ induces a dual orientation $\omega_0^*$ of $G^*$, using the right-hand rule based on the orientation of $\Si$. Note that $\omega_0^{**}=-\omega_0$. Our next goal is to compare the cut and flow lattices of $G$ and $G^*$, given the orientations $\omega_0$ and $\omega_0^*$. Note that integer flows and cuts are invariants of abstract graphs and do not depend on their embeddings, however, the construction of the surface dual $G^*$ depends on the embedding of $G$.

The inclusion maps of $G$ and $G^*$ into $\Si$ induce canonical maps  $\iota_*\co H_1(G; \ZZ) \to H_1(\Si; \ZZ)$ and $j_* \co H_1(G^*; \ZZ) \to H_1(\Si; \ZZ)$. Let 
\begin{equation} \label{eqn-sublattice}
\flows^\circ(G) =\ker \iota_*  \quad \text{and} \quad \flows^\circ(G^*)=\ker j_*. 
\end{equation}
Then $\flows^\circ(G) \subseteq \flows(G)$ and $\flows^\circ(G^*)\subseteq \flows(G^*)$ are
sublattices of the flow lattices of $G$ and $G^*$, respectively.
These sublattices are invariants of the surface embeddings of $G$ and $G^*$.

If $G$ and $G^*$ are \textit{planar} duals, then it is a classical fact, known as cut-flow duality, that $\cF(G)\cong \cC(G^*)$ as lattices, and vice versa.
The following theorem is a generalisation of this to surface graphs:

\begin{theorem}\label{thm:SurfaceCutFlowDuality}
There is a short exact sequence of abelian groups
\begin{equation}
0 \to \cuts(G^*) \stackrel{\varphi}{\lto} \flows(G) \stackrel{\iota_*}{\lto} H_1(\Si; \ZZ) \to 0,
\end{equation}
where $\varphi(\sum_i c_i e_i^*)= \sum_i c_i e_i$. In particular $\cuts(G^*) \cong \flows^\circ(G)$ as lattices.
\end{theorem}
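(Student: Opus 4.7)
The plan is to realise $\Si$ as a CW complex built from the cellular embedding of $G$ and compare its cellular chain complex to the graph-theoretic data of $G$ and $G^*$.

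First I would equip $\Si$ with the CW structure whose $0$- and $1$-cells are those of $G$, and whose $2$-cells are the connected components of $\Si\sm G$. By Lemma~\ref{lem:dualcell} these components are discs, and by Lemma~\ref{lem:face-vertex} they are in canonical bijection with the vertices of $G^*$. The resulting cellular chain complex reads
\[
0\to C_2\stackrel{\partial_2}{\lto} C_1\stackrel{\partial_1}{\lto} C_0\to 0,
\]
where $C_0=\ZZ^{V(G)}$, $C_1=\ZZ^{E(G)}$, $C_2=\ZZ^{V(G^*)}$, and $\partial_1$ agrees with the graph boundary map of $G$. Hence $\ker\partial_1=\flows(G)$, and because $G$ itself carries no $2$-cells we also have $H_1(G;\ZZ)=\flows(G)$. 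Under this identification $\iota_*$ is simply the quotient map $\flows(G)\twoheadrightarrow \flows(G)/\operatorname{im}\partial_2=H_1(\Si;\ZZ)$, which is automatically surjective, giving the right-hand exactness of the sequence.

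Next I would verify the key identity $\varphi\circ\partial^*_{G^*}=-\partial_2$ at the level of cellular chains. The map $\varphi$ sends the orthonormal basis $\{e_i^*\}$ of $C_1(G^*)$ to the orthonormal basis $\{e_i\}$ of $C_1(G)$, so it is tautologically an isometry of ambient chain groups, and in particular injective. For a face $f$ of $G$ with corresponding dual vertex $v^*\in V(G^*)$, the induced orientation on $\partial f$ records each edge $e\in\partial f$ with a sign according to whether $\omega_0$ agrees with $\partial f$. The right-hand-rule definition of $\omega_0^*$ says that $e^*\in\Out(v^*)$ exactly when $e$ runs against the induced boundary orientation of $f$. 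Comparing the two signed sums edge-by-edge yields $\varphi(\partial^*_{G^*}(v^*))=-\partial_2(v^*)$.

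Combining these observations, $\varphi$ maps $\cuts(G^*)=\operatorname{im}\partial^*_{G^*}$ bijectively and isometrically onto $\operatorname{im}\partial_2$, and the latter is precisely $\ker\iota_*$. This produces the claimed short exact sequence, and restricting the codomain identifies $\cuts(G^*)$ isometrically with $\flows^\circ(G)=\ker\iota_*$.

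The main obstacle I expect is pinning down the sign in $\varphi\circ\partial^*_{G^*}=-\partial_2$, which requires simultaneous control of the orientation of $\Si$, the right-hand-rule used to define $\omega_0^*$, and the induced orientations of face boundaries. Once this local sign computation is in hand, the rest of the argument is routine homological algebra.
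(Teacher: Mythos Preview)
Your argument is correct and rests on the same core observation as the paper's: under $\varphi$, the vertex cut at $v^*\in V(G^*)$ becomes (up to sign) the facial walk around the face of $G$ containing $v^*$. The packaging, however, is different. You place the CW structure induced by $G$ on $\Si$ and read off the entire exact sequence from its cellular chain complex, with the single identity $\varphi\circ\partial^*_{G^*}=-\partial_2$ doing all the work; surjectivity of $\iota_*$ and injectivity of $\varphi$ come for free. The paper instead argues geometrically: after noting that $\varphi$ carries vertex cuts to facial walks (so $\im\varphi\subseteq\ker\iota_*$), it identifies $\ker\iota_*$ with the span of the boundary circles of a regular neighbourhood $N(G)\subseteq\Si$, which are exactly the facial walks, to get the reverse inclusion. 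Your homological route is tidier and makes every part of exactness explicit; the paper's is more hands-on but leaves the injectivity of $\varphi$ and the surjectivity of $\iota_*$ implicit.

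Two small citation slips: the components of $\Si\sm G$ are discs by the standing hypothesis that $G$ is cellularly embedded, not by Lemma~\ref{lem:dualcell} (which concerns $G^*$); and the bijection between faces of $G$ and vertices of $G^*$ is simply the definition of $G^*$, not Lemma~\ref{lem:face-vertex}.
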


\begin{proof}
The lattice $\cuts(G^*)$ is generated by the so-called \textit{vertex cuts} of $G^*$: namely, for a vertex $v^* \in V_{G^*}$, the vertex cut 
$$k_{v^*}=\partial^*(v)=\sum_{e^*_r\in \In(v^*)} e^*_r - \sum_{e^*_s\in \Out(v^*)}e^*_s.$$
Furthermore, $\varphi(k_v)$ is the flow in $\flows(G)$ corresponding to the facial walk around $v^*$. Thus, $\varphi(\cuts(G^*))\subseteq \flows(G)$.
Since $G$ is cellularly embedded, each facial walk of $G$ bounds a disc. In particular, $\iota_*(\varphi(k_v^*))=0$, therefore $\operatorname{im} \varphi \subseteq \ker \iota_*$.

On the other hand, let $N(G)$ denote a regular neighbourhood of $\im(\iota)$. Then $N(G)$ is obtained from $\Si$ by removing a finite number of disjoint discs, one in each face of $G$:
$N(G)=\Si \sm (D_1 \sqcup \dots \sqcup D_k)$. Let $\ga_i$ denote the boundary curve $\partial D_i$. The curves $\ga_1,\ldots, \ga_k$ generate $\ker \iota_*$, and they represent exactly the flows in $\flows(G)$ corresponding to the facial walks. Hence, $\im \varphi =\ker \iota_*$, and $\cuts(G^*) \cong \flows^\circ(G)$ is an isometry of lattices since $\iota_*$ is inner product preserving.
\end{proof}

\begin{example} We now return to our running example of $K_{5.2429}$ -- see Example~\ref{ex-Tait} -- and compute the lattices of integer flows and cuts for the Tait graphs shown in Figures~\ref{fig:5-2429-example}~and~\ref{fig:5-2429-taits-abstract}. 

Since in this example every edge of $G_b(D)$ is a loop, $\partial=0$, and $\cF_b(D)=C_1(G_b(D))=\spans\{e_{1}, e_{2}, e_{3}, e_{4}, e_{5}\}$, with the Euclidean inner product given by the identity matrix $I_5$.
It follows that $G_b(D)$ has no non-trivial cuts, so $\cC_b(D)=\{0\}$.

The lattice of integer flows for $G_w(D)$ is generated by the two triangles
\begin{equation}\label{eq:FwBasis}
    \cF_w(D)=\spans\{f_{1} + f_{2} + f_{3}, f_3 + f_{4} + f_{5}\},
\end{equation}
and in this basis the pairing matrix is
\[\begin{bmatrix}
3 & 1 \\
1 & 3
\end{bmatrix}\,.\]
The lattice of integer cuts is of rank three, with one basis given by the vertex cuts corresponding to three of the four vertices (the fourth is the sum of these):
\begin{equation}\label{eq:CwBasis}
    \cC_w(D)=\spans\{- f_{1} + f_{2}, - f_2 + f_3 - f_4, f_4 - f_5 \},
\end{equation}
With corresponding pairing matrix
\[\begin{bmatrix}
\phantom{+}2 & -1 & \phantom{+}0 \\
-1 & \phantom{+}3 & -1 \\
\phantom{+}0 & -1 & \phantom{+}2
\end{bmatrix}\,.\]

It is clear that $\cF_b(D)$ and $\cC_w(D)$ are non-isomorphic as they are of different ranks, and so are $\cF_w(D)$ and $\cC_b(D)$. To verify the statement of Theorem~\ref{thm:SurfaceCutFlowDuality}, we compute the kernel of $\iota_*$ in $\cF_b(D)$. Let $H_1(\Si; \ZZ)= \spans\{\gamma_1, \gamma_2\}$, where $\gamma_1$ is represented by the vertical side of the square in Figure~\ref{fig:5-2429-taits}, and $\gamma_2$ is represented by the horizontal side. Then we can read off Figure~\ref{fig:5-2429-taits} that $\iota_*(e_1)=\gamma_1$, $\iota_*(e_2)=\gamma_1$, $\iota_*(e_3)=\gamma_1+\gamma_2$, $\iota_*(e_4)=\gamma_2$, and $\iota_*(e_5)=\gamma_2$. Thus, 
\begin{equation}\label{eq:F0bBasis}
\cF^\circ_b(D)=\spans\{-e_1+e_2, -e_1+e_3-e_4, e_4-e_5\}.
\end{equation}
Since $\varphi(f_i)=e_i$ for $i=1,\ldots ,5$, it is clear that $\varphi(\cC_w(D))=\cF^\circ_b(D)$, and the pairing matrix for $\cF^\circ_b(D)$ is unimodular congruent to that of $\cC_w(D)$ by virtue of being equal.

On the other hand to compute $\cF^\circ_w(D)$, observe that $j_*$ is injective, as the rank of $\cF_w(D)$ is 2. Indeed, $j_*(f_1+f_2+f_3)=\gamma_2$ and $j_*(f_3+f_4+f_5)=\gamma_1$. Thus, $\cF^\circ_w(D)=\{0\}\cong \cC_b(D)$, as expected.
\hfill $\Diamond$
\end{example}

In Section~\ref{section-GLform} we prove that in fact $\cF_b(D)$, $\cF^\circ_b(D)$ -- and therefore, the corresponding cut lattices as well -- are invariants of alternating links in thickened surfaces, and unchanged up to isometry by disc mutation. In the classical setting Greene~\cite{Greene-2011} shows that these invariants are complete up to Conway mutation. In Section~\ref{subsec:noncomplete} we examine the completeness of these invariants.


\subsection{The \textit{d}-invariant and chromatic duality}\label{subsec-dInv}
The $d$-invariant was originally defined by Ozsv\'ath and Szab\'o \cite{Ozsvath-Szabo-2003a} in terms of Heegaard Floer homology.
In \cite{Greene-2011}, Greene gives a purely lattice-theoretic definition of the $d$-invariant for alternating links. In this section, we adopt this approach to define the $d$-invariant for alternating links in thickened surfaces and virtual alternating links. We establish duality results relating the $d$-invariants of the flow and cut lattices of surface dual graphs.

We start with a review of the necessary lattice theory:

\begin{definition}
A \textit{characteristic covector} for a lattice $\La$ is an element $\xi \in \La^*$ satisfying $\langle \xi,x\rangle = |x| \,\, (\text{mod } 2)$ for all $x\in \La.$ 
\end{definition}
Let
\[
    \Char(\La) = \{ \xi \in \La^* \mid \langle \xi,x\rangle = |x| \quad (\text{mod } 2) \}
\]
denote the set of characteristic covectors for $\La$ and set
$$\cX(\La) = \Char(\La) \quad (\text{mod } 2 \La).$$
Note that $\cX(\La)$ is a torsor over $\wbar{\La} =\La^*/\La$, the discriminant group, viewed as $2\La^*/2\La$. 

For $\xi \in \Char(\La)$, let $[\xi]$ denote its image in $\cX(\La).$
Define the \textit{short characteristic covectors} as
$$ {\Short}(\La) = \{ \chi \in \Char(\La) \mid |\chi| \leq |\chi'| \text{ for all } \chi' \in [\chi] \}.$$

\medskip
For $[\xi] \in \cX(\La)$, define the $\rho$-invariant
$$\rho([\xi]) \equiv \frac{|\xi|-\si(\La)}{4} \quad (\text{mod } 2 ),$$
where $\si(\La)$ denotes the signature of the pairing $\langle \, ,\, \rangle$ on $\La.$
If $\xi \in \Char(\La)$ is another characteristic covector with $[\xi'] =[\xi]$, then $\xi'-\xi= 2 x$ for some $x\in \La$ and 
$$|\xi'|=|\xi+2x|= |\xi| + 4 (\langle \xi, x\rangle +|x|) \equiv |\xi| \quad (\text{mod } 8).$$
This shows that $\rho([\xi])$ is independent of the choice of representative for $[\xi]$, and it gives rise to a well-defined map 
\begin{equation}\label{eqn-rho}
\rho\co \cX(\La)\lto  \QQ/2\ZZ.
\end{equation}

If  $\langle \; ,\, \rangle$ is positive definite, then the map $\rho$ in \eqref{eqn-rho} lifts to a well-defined $\QQ$-valued map, the $d$-invariant:

\begin{definition}\label{def-dInv}
For a positive definite lattice $\La$ of rank $n$, the $d$-invariant $(\cX(\La), d)$ is given by the torsor $\cX(\La)$ along with the map
$$d \colon \cX(\La) \lto \QQ,$$ $$d([x])= \min \left\{ \frac{\langle x', x'  \rangle -n}{4} \; \middle| \; x' \in [x]  \right\}.$$ 
\end{definition}

\begin{definition}\label{def-dInvIso}
Given two positive definite lattices $\La, \La'$ with $d$-invariants  
\[
d \colon \cX(\La) \lto \QQ \quad \text{and} \quad d' \colon \cX(\La') \lto \QQ,
\]
an \textit{isomorphism} of $d$-invariants $(\cX(\La),d) \cong (\cX(\La'),d')$ consists of a group isomorphism $\psi \co \wbar{\La} \to  \wbar{\La'}$
and $\psi$-equivariant bijection $\varphi \co \cX(\La) \to \cX(\La')$ such that $d'= d \circ \varphi.$ 
\end{definition}

For a graph $G$, the lattices $\cF(G)$ and $\cC(G)$ are \textit{primitive complementary} sublattices (mutual orthogonal complements) in $C_1(G;\ZZ)\cong\ZZ^{E(G)}$, and the short vectors of $\ZZ^{E(G)}$ surject to the short vectors of $\cF(G)$ and $\cC(G)$ by orthogonal projection \cite[Corollary 3.4]{Greene-2011}. This is a useful fact for computing the $d$-invariants of $\cF(G)$ and $\cC(G)$, as $\Short(\ZZ^n)$ is simply the set of $(\pm 1)$-vectors. It also follows \cite[Corollary 3.4]{Greene-2011} that
for any graph $G$, there is a canonical isomorphism of $d$-invariants
\[
    (\cX(\cF(G)), d_{\cF(G)})\xrightarrow{\cong}(\cX(\cC(G)), -d_{\cC(G)}).
\]

If $G$ is a planar graph embedded in $S^2$, and $G^*$ its planar dual, then we also have, by cut-flow duality, that $\cF(G)\cong \cC(G^*)$ and  $\cF(G^*)\cong \cC(G)$. Thus, all $d$-invariants of cuts and flows of $G$ and $G^*$ are in the same isomorphism class up to sign:
$$(\cX(\cC(G^*)), d_{\cC(G^*)}) \cong (\cX(\cF(G)), d_{\cF(G)}) \cong  (\cX(\cC(G)), -d_{\cC(G)}) \cong (\cX(\cF(G^*)), -d_{\cF(G^*)}).$$

The following result establishes the analogous statement for surface graphs. We call this \textit{chromatic duality}: 
\begin{theorem}[Chromatic duality]\label{thm-ChromDuality}
For a cellular surface graph $G\hookrightarrow \Si$ with surface dual $G^*$, the associated $d$-invariants belong to two equivalence classes, as follows:
$$(\cX(\cF^\circ(G)), -d_{\cF^\circ(G)}) \cong (\cX(\cC(G^*)), -d_{\cC(G^*)}) \cong  (\cX(\cF(G^*)), d_{\cF(G^*)}),$$
$$(\cX(\cF^\circ(G^*)), -d_{\cF^\circ(G^*)}) \cong (\cX(\cC(G)), -d_{\cC(G)}) \cong  (\cX(\cF(G)), d_{\cF(G)}).$$ 
\end{theorem}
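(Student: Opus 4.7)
The plan is to build both chains of isomorphisms by combining two facts already established in the excerpt: Theorem~\ref{thm:SurfaceCutFlowDuality}, which gives an explicit lattice isomorphism $\varphi \co \cC(G^*) \to \cF^\circ(G)$, and the cut--flow $d$-invariant duality recalled just before the statement, which says $(\cX(\cF(H)), d_{\cF(H)}) \cong (\cX(\cC(H)), -d_{\cC(H)})$ for any graph $H$ coming from the primitive complementary sublattice structure of $\cF(H), \cC(H) \subseteq C_1(H;\ZZ)$.

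First I would observe that $\varphi$ is not merely a group isomorphism but an \emph{isometry}: both $\{e_i\}$ and $\{e_i^*\}$ are orthonormal bases of the ambient Euclidean chain groups, and $\varphi$ simply relabels one by the other. Since $\cF(G), \cC(G), \cF(G^*), \cC(G^*)$, and the sublattices $\cF^\circ(G), \cF^\circ(G^*)$ all sit inside Euclidean lattices, they are positive definite and so have well-defined $d$-invariants in the sense of Definition~\ref{def-dInv}. Any isometry of positive definite lattices $\La \cong \La'$ induces an isomorphism of $d$-invariants $(\cX(\La), d_\La) \cong (\cX(\La'), d_{\La'})$ in the sense of Definition~\ref{def-dInvIso}, because characteristic covectors, their norms, the rank, and the discriminant group are all transported intact. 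Applying this to $\varphi$ gives
\[
(\cX(\cC(G^*)), -d_{\cC(G^*)}) \cong (\cX(\cF^\circ(G)), -d_{\cF^\circ(G)}),
\]
and applying the cut--flow duality to $G^*$ gives
\[
(\cX(\cF(G^*)), d_{\cF(G^*)}) \cong (\cX(\cC(G^*)), -d_{\cC(G^*)}).
\]
Concatenating these two yields the first displayed line of the theorem. The second line follows by running the same argument with the roles of $G$ and $G^*$ interchanged, which is legitimate because $G^{**} = G$ is a corollary of Lemma~\ref{lem:dualcell}.

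The only point requiring care is the sign bookkeeping: an isometry preserves the $d$-invariant with its sign, whereas the passage between the flow lattice and the cut lattice of a single graph is precisely where the sign flip is introduced. I do not expect a genuine obstacle here, since no explicit manipulation of characteristic covectors is needed beyond what is already packaged into Greene's cut--flow correspondence and the isometry supplied by Theorem~\ref{thm:SurfaceCutFlowDuality}. The result is essentially a formal consequence of combining those two inputs, with positive-definiteness ensured by the Euclidean ambient.
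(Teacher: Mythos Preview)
Your proposal is correct and follows essentially the same approach as the paper's proof: combine the isometry $\cC(G^*) \cong \cF^\circ(G)$ from Theorem~\ref{thm:SurfaceCutFlowDuality} with Greene's cut--flow $d$-invariant duality applied to $G$ and $G^*$ as abstract graphs. Your version is more explicit about why the lattices are positive definite and why isometries transport $d$-invariants, but the underlying argument is identical.
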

\begin{proof}
    Regarding $G$ and $G^*$ as abstract graphs, it remains true that 
    \[
    (\cX(\cF(G)), d_{\cF(G)})\cong (\cX(\cC(G)), -d_{\cC(G)}), \, (\cX(\cF(G^*)), d_{\cF(G^*)})\cong (\cX(\cC(G^*)), -d_{\cC(G^*)}).
    \]
    However, it is no longer true that $\cF(G)\cong \cC(G^*)$. Instead,  Theorem~\ref{thm:SurfaceCutFlowDuality} implies that for the kernels of $\iota_*$ and $j_*$, there are isometries  $\cF^\circ(G)\cong \cC(G^*)$ and $\cF^\circ(G^*)\cong \cC(G)$. \qedhere
\end{proof}

\begin{remark}
Chromatic duality is a general principle that relates invariants of links in thickened surfaces defined in terms of one checkerboard surface with those of its chromatic dual under restriction. It asserts that the link invariants for the black surface $F_b$ coincide with the invariants for the white surface $F_w$ on the kernel of $H_1(F_w;\ZZ) \to H_1(\Si \times I;\ZZ)$, and vice versa.

One instance of chromatic duality is Theorem 5.4 of \cite{Boden-Chrisman-Karimi} relating the signature, determinant, and nullity of the Gordon-Litherland pairing $\cG_{F_b}$ for the black surface with the corresponding invariants of the Goeritz matrices for its chromatic dual. Another instance is Theorem 4.9 of \cite{Boden-Karimi-2021} relating the Brown invariants of the Gordon-Litherland pairing $\cG_{F_b}$ for the black surface with the corresponding invariants of the Goeritz matrices of its chromatic dual. We review the definition of the Gordon-Litherland pairing in Section~\ref{section-GLform}.
\end{remark}

\begin{example}
For our running example of $K_{5.2429}$, we compute the $d$-invariants associated to the lattices $\cF^\circ_{b/w}$, $\cC_{b/w}$ and $\cF_{b/w}$ of the surface dual graphs $G_b=G_{b}(D)$ and $G_w=G_{w}(D)$.

\begin{table}[H]
\centering
{ \def\arraystretch{1.2} \footnotesize
\begin{tabular}{|c|c|c|c|c|} \hline

${\boldsymbol{\Short}(\ZZ^5)}$ & $\boldsymbol{x}$ & $ \boldsymbol{x_{E}}$ & $\boldsymbol{\langle x, x\rangle}$ & $\boldsymbol{d([x])}$\\ \hline \hline
$(1, 1, 1, 1, 1)$&       $(\tfrac{1}{4}, \tfrac{1}{4}, \tfrac{1}{2})$ &    $(\tfrac{1}{4}, \tfrac{1}{4}, -\tfrac{1}{4}, -\tfrac{1}{4}, \tfrac{1}{2})$&   $\tfrac{1}{2}$&      $-\tfrac{5}{8}$\\ \hline
$(1, 1, 1, 1, -1)$&      $(-\tfrac{1}{4}, -\tfrac{1}{4}, -\tfrac{1}{2})$&  $(-\tfrac{1}{4}, -\tfrac{1}{4}, \tfrac{1}{4}, \tfrac{1}{4}, -\tfrac{1}{2})$&  $\tfrac{1}{2}$&      $-\tfrac{5}{8}$ \\ \hline
$(1, 1, 1, -1, 1)$&      $(\tfrac{1}{2}, \tfrac{3}{2}, 1)$  &     $(\tfrac{1}{2}, \tfrac{1}{2}, \tfrac{1}{2}, -\tfrac{3}{2}, 1)$&      $4$&        $\tfrac{1}{4}$ \\ \hline
$(1, 1, 1, -1, -1)$&     $(0, 1, 0)$&           $(0, 0, 1, -1, 0)$&              $2$&        $-\tfrac{1}{4}$ \\ \hline
$(1, 1, -1, -1, 1)$&     $(\tfrac{3}{4}, \tfrac{3}{4}, \tfrac{3}{2})$&     $(\tfrac{3}{4}, \tfrac{3}{4}, -\tfrac{3}{4}, -\tfrac{3}{4}, \tfrac{3}{2})$&   $\tfrac{9}{2}$&      $\tfrac{3}{8}$\\ \hline
$(1, -1, 1, 1, 1)$&      $(1, 0, 0)$&           $(1, -1, 0, 0, 0)$&              $2$&        $-\tfrac{1}{4}$\\ \hline
$(1, -1, 1, -1, 1)$&     $(\tfrac{5}{4}, \tfrac{5}{4}, \tfrac{1}{2})$&     $(\tfrac{5}{4}, -\tfrac{3}{4}, \tfrac{3}{4}, -\tfrac{5}{4}, \tfrac{1}{2})$&   $\tfrac{9}{2}$&      $\tfrac{3}{8}$\\ \hline
$(1, -1, -1, -1, 1)$&    $(\tfrac{3}{2}, \tfrac{1}{2}, 1)$&       $(\tfrac{3}{2}, -\tfrac{1}{2}, -\tfrac{1}{2}, -\tfrac{1}{2}, 1)$&    $4$&        $\tfrac{1}{4}$\\ \hline
\end{tabular}
}
\caption{\small The $d$-invariant of $\cF^{\circ}_b$. The third column expresses the vector $x$ in the edge basis. }\label{table:F0Gb}
\end{table}

\begin{multicols}{2}

\begin{table}[H]
\centering
{ \def\arraystretch{1.3} \footnotesize
\begin{tabular}{|c|c|}
\hline
$\boldsymbol{x}$ & $\boldsymbol{d([x])}$\\ \hline \hline
$(0, \tfrac{1}{4}, -\tfrac{1}{4})$ & $-\frac{5}{8}$ \\ \hline
$(1, 1, 1)$ &  $-\frac{1}{4}$\\ \hline
$(0, -\tfrac{1}{4}, \tfrac{1}{4})$ &  $-\frac{5}{8}$ \\ \hline
$(0, \tfrac{3}{4}, -\tfrac{3}{4})$ &  $\frac{3}{8}$\\ \hline
$(1, \tfrac{3}{2}, \tfrac{1}{2})$  & $\tfrac{1}{4}$\\ \hline
$(1, 0, 0)$ &   $-\tfrac{1}{4}$\\ \hline
$(1, \tfrac{1}{2}, -\tfrac{1}{2})$ & $\tfrac{1}{4}$\\ \hline
$(2, \tfrac{5}{4}, \tfrac{3}{4})$  & $\tfrac{3}{8}$\\ \hline
\end{tabular}
}

\captionsetup{width=\columnwidth}
\caption{The $d$-invariant of $\cC_w$.}\label{table:CG*b}
\end{table}

\begin{table}[H]
\centering
{ \def\arraystretch{1.3} \footnotesize
\begin{tabular}{|c|c|}
\hline
$\boldsymbol{x}$ & $\boldsymbol{d([x])}$\\ \hline \hline
$(\frac{3}{4}, \frac{3}{4})$& $\frac{5}{8}$\\ \hline
$(1, 0)$&     $\frac{1}{4}$\\ \hline
$(\frac{5}{4}, \frac{3}{4})$&  $\frac{5}{8}$\\ \hline
$(\frac{1}{4}, \frac{1}{4})$&   $-\frac{3}{8}$\\ \hline
$(\frac{1}{2}, -\frac{1}{2})$&   $-\frac{1}{4}$\\ \hline
$(0, 1)$&     $ \frac{1}{4}$\\ \hline
$(-\frac{1}{2}, \frac{1}{2})$&  $-\frac{1}{4}$\\ \hline
$(-\frac{1}{4}, -\frac{1}{4})$& $-\frac{3}{8}$\\ \hline
\end{tabular}
}

\captionsetup{width=\columnwidth}
\caption{The $d$-invariant of $\cF_w$.}\label{table:FG*b}
\end{table}

\end{multicols}

By \cite[Proposition 2.7]{Greene-2011}, the short vectors of $\ZZ^n\cong\ZZ^{E(G_b)}=\ZZ^{E(G_w)}$ -- which are exactly the $\pm 1$-vectors -- surject by orthogonal projection to the short vectors of each of $\cX(\cF_w)$ and $\cX(\cC_w)$. By Theorem~\ref{thm:SurfaceCutFlowDuality}, the $\pm 1$-vectors in $\ZZ^{n}$ also surject to the short vectors of $\cX(\cF^{\circ}_{b})$. Thus, for a short vector $s\in \ZZ^{n}$, and image $x=\pi(e)$ in one of the three characteristic cosets, we have $d([x])=\frac{1}{4}(\langle x, x\rangle - n)$, where $\langle x, x \rangle$ is taken in $\ZZ^{n}$. This calculation is presented in full for $\cF^{\circ}_{b}$ in Table \ref{table:F0Gb}. In each table the $x$ column expresses the short vectors in the bases \eqref{eq:F0bBasis}, \eqref{eq:CwBasis}, and \eqref{eq:FwBasis}, respectively. In all three tables, where multiple short vectors in $\ZZ^{5}$ map to the same short vector modulo $2\La$, only one representative is shown.

It is clear by inspection that the $d$-invariant values in the first and second tables are equal, and in the third table are negatives of the second table. The discriminant group in all cases is $\ZZ_8$, and it is an instructive exercise to verify that the identifications of $\cX$ are compatible with the action of $\ZZ_8$.

The $x$ column in Table \ref{table:CG*b} is written in the basis \eqref{eq:CwBasis}.
The $x$ column in Table \ref{table:FG*b} is with respect to the basis \eqref{eq:FwBasis}.
By \cite[Chapter 2.4]{Greene-2011}, the $d$-invariant of $\ZZ^{n}$ is zero. The lattices $\cF^\circ_{w}$, $\cC_{b}$ and $\cF_{b}$ are Euclidean, so their $d$-invariants are all zero.
\hfill $\Diamond$
\end{example}

\section{The Gordon-Litherland form} \label{section-GLform}
In \cite{GL-1978}, Gordon and Litherland defined a symmetric bilinear form for classical links which simultaneously generalised the forms of Goeritz and Trotter. They used it to give a simple method to compute the link signature.

In \cite{Boden-Chrisman-Karimi}, the Gordon-Litherland pairing was extended to links in thickened surfaces, and it was used to define link signatures in this setting. In \cite{Boden-Karimi-2023b}, the  Gordon-Litherland linking form was introduced, and it was used to define new invariants, namely mock Alexander polynomials and mock Levine-Tristram signatures.

In this section, we recall the definitions of the Gordon-Litherland linking form and pairing, as well as link signatures and mock Alexander polynomials. Our main goal is to relate the  Gordon-Litherland pairing to the lattice of integer flows for alternating links in thickened surfaces.

\subsection{Spanning surfaces}\label{subsec-SpanningSurface}

A \textit{spanning surface} of a link $L \subseteq \Si \times I$ is a compact surface $F \subseteq \Si \times I$ with boundary $\partial F = L$. The surface $F$ may or may not be orientable, but we regard it as unoriented. The definition of the Gordon-Litherland linking form in Section \ref{subsec-GLform} below will be based on a choice of spanning surface for the link.

All classical links have spanning surfaces -- in fact, orientable Seifert surfaces -- but the same is not true for links in thickened surfaces.
Recall from Section \ref{subsec-CCandTaitGraph} that if a link in $\Si \times I$ is checkerboard colourable, then both of the checkerboard surfaces are spanning surfaces for the link. Alternatively, if $L \subseteq \Si \times I$ is a link with spanning surface $F\subseteq \Si \times I$, then the surface $F$ is a finite union of discs and bands. One can apply an isotopy to shrink the disks so their images under projection $\Si \times I \to \Si$ are disjoint from one another and also disjoint from the interiors of the bands. The isotopy of $F$ produces  a link diagram which is checkerboard colourable. In summary, a link in $\Si \times I$ has a spanning surface if and only if it is checkerboard colourable.

Alternatively, Proposition 1.1 of \cite{Boden-Chrisman-Karimi} applies to show that a link $L \subseteq \Si \times I$ admits a spanning surface if and only if its homology class $[L]$ vanishes.

\begin{definition} \label{defn-ssequivalent}
Given links $L, L' \subseteq \Si \times I$ with spanning surfaces $F, F' \subseteq \Si \times I$, we say $F$ and $F'$ are \textit{equivalent} if there exists a homeomorphism $\varphi$ of $(\Si\times I, \Si \times \{0\})$ such that $\varphi(F)=F'$.
\end{definition}

If $F$ and $F'$ are equivalent spanning surfaces, then the links $L$ and $L'$ are equivalent as defined in Section \ref{subsec:Links}.


\subsection{Linking numbers}\label{subsec-LinkingNumbers}
A crucial ingredient in the Gordon-Litherland linking form is the notion of linking numbers. Let $J$ and $K$ be two oriented disjoint simple closed curves in the interior of $\Si \times I$. Consider the relative homology group \linebreak $H_1(\Si \times I \sm J, \Si \times \{1\};\ZZ)$. This group is isomorphic to $\ZZ$, and generated by a meridian $\mu$ of $J$. The \textit{linking number} of $J$ with $K$ is denoted $\lk(J, K)$ and defined as the unique integer $n$ such that 
$$[K] = n\mu \quad \text{ in } \quad H_1(\Si \times I \sm J, \Si \times \{1\};\ZZ).$$ 

Equivalently, $\lk(J, K)$ is the algebraic number of instances that $J$ passes above $K$, where \textit{above} is defined with respect to $t$, the parameter of the interval $I$. A positive (resp. negative) contribution is made to $\lk(J, K)$ when $J$ passes over $K$ such that the coordinate system given by the unit tangent vector to $J$, the unit tangent vector to $K$, and the positive $t$ direction has a positive (resp. negative) orientation, as shown in Table~\ref{tab:linking-number-crossings}.

\begin{table}[ht]
\centering
\footnotesize{
    \begin{tabular}{| c | c | c @{\hspace{10pt}}||@{\hspace{10pt}} c | c | c|}  \hline
    \textbf{Crossing}	& $\boldsymbol{\lk(J, K)}$ & $\boldsymbol{\lk(K, J)}$ & \textbf{Crossing}	& $\boldsymbol{\lk(J, K)}$ & $\boldsymbol{\lk(K, J)}$ \\ \hline 	\hline	 
$\begin{tikzpicture}
\draw[line width=1.50pt, blue,->] (.05,.05) -- (.32,.30) ;
\draw[line width=1.50pt,blue](-.32,-.30) -- (-.05,-.05);
\draw[line width=1.50pt,red,->](-.32,.30) -- (.32,-.30);
\end{tikzpicture}$
& $+1$ & 0 & 
$\begin{tikzpicture}
\draw[line width=1.50pt,white] (-.32,.40) -- (.32,.40);
\draw[line width=1.50pt,red] (-.32,.30) -- (-.05,.05);
\draw[line width=1.50pt, red,->] (.05,-.05) -- (.32,-.30);
\draw[line width=1.50pt,blue,->](-.32,-.30) -- (.32,.30);
\end{tikzpicture}$
& 0 & $-1$ \\ \hline 		
$\begin{tikzpicture}
\draw[line width=1.50pt,blue] (-.32,.30) -- (-.05,.05);
\draw[line width=1.50pt, blue,->] (.05,-.05) -- (.32,-.30);
\draw[line width=1.50pt,red,->](-.32,-.30) -- (.32,.30);
\end{tikzpicture}$
 & $-1$ & 0 & 
$\begin{tikzpicture}
\draw[line width=1.50pt,white] (-.32,.40) -- (.32,.40);
\draw[line width=1.50pt, red,->] (.05,.05) -- (.32,.30) ;
\draw[line width=1.50pt,red](-.32,-.30) -- (-.05,-.05);
\draw[line width=1.50pt,blue,->](-.32,.30) -- (.32,-.30);
\end{tikzpicture}$
& 0 & $+1$ \\ \hline
    \end{tabular}
}
	\caption{Crossings between $J$ (red) and $K$ (blue) and their contribution to $\lk(J, K)$ and $ \lk(K, J)$.}
	\label{tab:linking-number-crossings}
\end{table}

Note that $\lk(J, K)$ is independent of the number of times that $J$ passes under $K$: this is instead captured by $\lk(K, J)$. In general, $\lk(J,K) \neq \lk(K,J)$. The exception is the classical setting of links in $\RR^{2} \times I$ -- or equivalently, in $S^2 \times I$ -- where linking numbers are symmetric as a consequence of the Jordan curve theorem.

A related notion is the \textit{intersection pairing} on the first homology $H_1(\Si;\ZZ)$, denoted $\al\cdot \be$ for homology classes $\al$ and $\be$. It is defined to be the algebraic intersection number of $\al$ and $\be$, where the sign is the sign of the coordinate system given by the derivatives $\hat{a}, \hat{b}$, where $a$ and $b$ are two $1$-chains. 

Let $p\co  \Si \times I \lto \Si$ be the projection map and $p_{*} \co H_1(\Si \times I) \to H_1(\Si)$ the induced map on the first homology groups. The linking numbers in $\Si\times I$ and algebraic intersection numbers on $\Si$ are related:
\begin{equation}\label{eqn-CTlinking}
\lk(J, K) - \lk(K, J) = p_{*}(J) \cdot p_{*}(K)\,.
\end{equation}
See \cite[\S 1.2]{Cimasoni-Turaev} for a proof, or verify it directly by a case analysis using Table~\ref{tab:linking-number-crossings}.


\subsection{The Gordon-Litherland linking form and pairing} \label{subsec-GLform}

Let $L\subseteq \Si\times I$ be a link and $F \subseteq \Si \times I$ a spanning surface for $L$.
The normal bundle $N(F)$ has boundary $\wt{F}$, which is a double cover of $F$. Define the \textit{transfer map}  $\tau\co H_1(F;\ZZ) \lto H_1(\wt{F};\ZZ)$ by $\tau([a]) = [\pi^{-1}(a)]$.
If $[a]$ is represented by a simple closed curve $\al$ on $F$, then $\tau([a])$ is represented by the curve $\tau \al$ obtained by taking the normal pushoffs of $\al$ in both directions. Note that $\tau \al$ will consist of one or two simple closed curves, depending on whether $\pi^{-1}(\al) \subseteq \wt{F}$ is connected or not. 

The following is Definition 3.1 of \cite{Boden-Karimi-2023} extended to links in thickened surfaces.

\begin{definition}\label{def-GLForm}
 Let $L$ be a link in $\Si \times I$, and $F$ a spanning surface for $L$. The \textit{Gordon-Litherland linking form}, or \textit{linking form} for short, is the (not necessarily symmetric)  bilinear form
\[\cL_{F}\co H_1(F;\ZZ) \times H_1(F;\ZZ) \lto \ZZ,\]
\[\cL_{F}(\al, \be) = \lk(\tau\al, \be).\]
\end{definition}

If $F,F'$ are equivalent spanning surfaces in the sense of Definition \ref{defn-ssequivalent}, then their linking forms $\cL_F$ and $\cL_{F'}$ are isomorphic. 

Given a basis $\{\al_{1}, \dots, \al_{n}\}$ for $H_1(F;\ZZ)$, the $n \times n$ matrix with $(M)_{ij} = \lk(\tau \al_{i}, \al_{j})$ is a representative for $\cL_F$. Any such matrix  is called a \textit{mock Seifert matrix} for $\cL_F.$ The word \textit{mock} here refers to the fact that the spanning surface $F$ need not be orientable. Just as with lattices (cf.~Section~\ref{subsec-Lattice}), the mock Seifert matrix changes by unimodular congruence under a change of basis for $H_1(F;\ZZ)$.  

One link invariant that can be derived from mock Seifert matrices is the link determinant. It is defined by setting \begin{equation} \label{eqn-linkdet}
    \det(L) = |\det(A)|,
\end{equation} 
where $F$ is a spanning surface for $L$ and $A$ is a mock Seifert matrix representing the linking form $\cL_F.$ By \cite[Corollary 3.10]{Boden-Karimi-2023}, the link determinant is independent of the choice of spanning surface $F$ for $L$.

Another link invariant is the mock Alexander polynomial. It is defined for knots in thickened surfaces in \cite[Definition 4.4]{Boden-Karimi-2023}, and the same definition works for links.
\begin{definition}\label{def-mock-Alex}
Let  $L$ be a link in $\Si \times I$ with spanning surface $F$. The \textit{mock Alexander polynomial} is given by $\De_{K,F}(t) =\det(t A - A^\tr)$, where $A$ is a mock Seifert matrix for the linking form $\cL_{F}$. Then $\De_{K,F}(t)$ is an element in the ring $\ZZ[t,t^{-1},(1-t)^{-1}]$ and is well-defined up to units.
\end{definition}

For $\De(t), \De'(t) \in \ZZ[t,t^{-1}, (t-1)^{-1}]$, we write $\De(t) \doteq \De'(t)$ if they are equal up to factors $\pm t^k$ and $\pm(t-1)^\ell$. It depends on the spanning surface $F$ up to $S^*$-equivalence. For classical links, Gordon and Litherland showed in \cite[Theorem 11]{GL-1978} that any two spanning surfaces are $S^*$-equivalent. (For an elementary proof, see \cite{Yasuhara}.) Proposition 1.6 of \cite{Boden-Chrisman-Karimi} gives a generalization for non-split checkerboard colourable links in thickened surfaces. It asserts that there are exactly two  $S^*$-equivalence classes of spanning surfaces; one is represented by the black checkerboard surface $F_b$ and the other is represented by the white surface $F_w$.

\begin{example}
Continuing the example of $K_{5.2429}$, we calculate the linking forms $\cL_{F_b}$ and $\cL_{F_w}$ corresponding to the black and white checkerboard surfaces, as in Figure~\ref{fig:5-2429-taits}.
For the black checkerboard surfaces with basis $\{e_1,e_2,e_3,e_4,e_5\}$, the linking form $\cL_{F_b}$ is represented by the mock Seifert matrix
$$ A_b=\begin{bmatrix} 1 & 0 & -1 & -1 & -1 \\ 
0 & 1 & -1 & -1 & -1 \\ 
1 & 1 & \phantom{+}1 & -1 & -1 \\ 
1 & 1 & \phantom{+}1 & \phantom{+}1 & \phantom{+}0 \\ 
1 & 1 & \phantom{+}1 & \phantom{+}0 & \phantom{+}1
\end{bmatrix}. $$
This matrix has determinant 9, thus by Equation \eqref{eqn-linkdet}, this knot has determinant $\det(K_{5.2429})=9$. By Definition \ref{def-mock-Alex} it has mock Alexander polynomial  
$$\De_{K,F_b}(t)=9t^5 - 13t^4 - 6t^3 + 6t^2 + 13t - 9 \doteq 9t^2 + 14t + 9.$$

For the white checkerboard surfaces with basis $\{f_1+f_2 +f_3,f_2+f_4+f_5\}$, the linking form $\cL_{F_w}$ is represented by the mock Seifert matrix
$$ A_w = \begin{bmatrix} -3 & \phantom{+}0 \\ 
\phantom{+}2 & -3 
\end{bmatrix}.$$
This matrix also has determinant 9, and  the mock Alexander polynomial is $\De_{K,F_w}(t)=9t^2-14t+9.$
\hfill $\Diamond$
\end{example}

In practice, it is simpler to think of the Gordon-Litherland form as a sum of its symmetric and anti-symmetric parts. In particular, we give the symmetrisation of $\cL$ its own name: 

\begin{definition}\label{def-GLPairing}  
 Let $L$ be a link in $\Si \times I$, and $F$ a spanning surface for $L$. 
 The \textit{Gordon-Litherland pairing} is the symmetric bilinear form
\[\cG_{F}\co H_1(F;\ZZ) \times H_1(F;\ZZ) \lto \ZZ,\]
\[\cG_{F}(\al, \be) = \tfrac{1}{2}(\lk(\tau \al, \be) + \lk(\tau\be, \al))\,.\]
We say that  the surface $F$ is \textit{positive definite} if the form $\cG_{F}$ is positive definite,
and \textit{negative definite} if $\cG_{F}$ is negative definite.
\end{definition}

If $A$ is a mock Seifert matrix for $\cL_{F}$ with respect to some basis of $H_1(F;\ZZ)$, then its symmetrisation $\frac{1}{2}(A + A^{\tr})$ represents $\cG_{F}$ with respect to the same basis. 

We take a moment to recall the definition of the link signature for the checkerboard surfaces $F_b,F_w$ in terms of the Gordon-Litherland pairing 
from \cite[\S 2]{Boden-Chrisman-Karimi}. 
Let $L \subseteq \Si \times I$ be a link represented by a connected, checkerboard coloured, cellular diagram $D$. Then the checkerboard surfaces $F_b, F_w$ are spanning surfaces for $L$.
For any crossing $x$ of $D,$ referring to Figure \ref{fig:crossing-type}, define the incidence number $$\eta_x = \begin{cases} 1, & \text{ if $x$ is type A,}\\-1, & \text{ if $x$ is type B.} \end{cases}$$
The black and white correction terms are given by (cf.~ \cite[Lemma 2.4]{Boden-Chrisman-Karimi}) 
$$\mu(F_b) = \sum_{x \text{ type II}} \eta_x \quad \text{ and } \quad \mu(F_w) = -\sum_{x \text{ type I}} \eta_x.$$
The link signatures of $L$  with respect to the checkerboard surfaces are defined as:
\begin{equation} \label{eqn-linksignature}
\si(L,F_b) = \sig(\cG_{F_b}) - \mu(F_b) \quad \text{ and } \quad 
 \si(L,F_w) = \sig(\cG_{F_w}) - \mu(F_w).
 \end{equation}
In general,  the two signatures $\si(L,F_b)$ and $\si(L,F_w)$ need not be equal. In fact, according to \cite{Boden-Chrisman-Karimi}, the link signature depends on the choice of spanning surface up to $S^*$-equivalence. As previously noted, there are exactly two  $S^*$-equivalence classes of spanning surfaces, and each is represented by one of the checkerboard surfaces.

For classical links, \cite[Section 1.1]{Greene-2011} Greene observed that the Gordon-Litherland pairing on the black  (positive definite) checkerboard surface of an alternating link diagram coincides with the flow pairing on the lattice of integer flows of the black Tait graph. The Gordon-Litherland pairing on the white (negative definite) surface coincides with the negative of the pairing of the flow lattice of the white Tait graph.
The next theorem presents the analogous results for alternating links in $\Si\times I$:

\begin{theorem} \label{thm-equiv}
Let $D$ be a reduced, alternating, cellular diagram for a link $L\subseteq \Si \times I$ with checkerboard colouring so that all crossings have type A. For the black surface $F_b(D)$, the Gordon-Litherland pairing is isometric to the lattice of integer flows on the black Tait graph $G_b$, that is, $\cG_{F_b(D)} \cong \cF_b(D)$. For the white surface, they are isometric with a change of sign: $\cG_{F_w(D)} \cong -\cF_w(D)$.
\end{theorem}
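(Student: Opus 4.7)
The plan is to identify $H_1(F_b;\ZZ)$ with $\cF_b$ via the deformation retract $F_b \simeq G_b$, and then to verify that the Gordon-Litherland pairing coincides with the Euclidean inner product inherited from $C_1(G_b;\ZZ)$ by a local computation at each crossing. This parallels the classical argument in \cite[Section 1.1]{Greene-2011}, modified to handle linking in a thickened surface rather than in $S^3$.

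First, since $F_b$ deformation retracts onto $G_b$ (each half-twisted band collapses onto its core edge), there is an induced isomorphism $H_1(F_b;\ZZ)\cong H_1(G_b;\ZZ)=\cF_b$. A cycle $\alpha=\sum_i a_i e_i \in \cF_b$ is realised on $F_b$ by a 1-cycle traversing the band of $e_i$ with signed multiplicity $a_i$; in particular, a basis of $\cF_b$ can be represented by embedded simple closed curves on $F_b$, for instance via a spanning tree and its associated fundamental cycles.

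Second, for two such cycles $\alpha,\beta$ placed in general position, the linking numbers $\lk(\tau\alpha,\beta)$ and $\lk(\tau\beta,\alpha)$ decompose as sums of contributions indexed by the bands shared by the two curves. Each band sits inside a Euclidean $3$-ball in $\Sigma\times I$, so the contribution at a shared band reduces to a classical calculation. At a type A crossing shared with multiplicities $a_i,b_i$, the half-twist yields a symmetric contribution of $+a_i b_i$ to $\cG_{F_b}(\alpha,\beta)$. Summing yields $\cG_{F_b}(\alpha,\beta)=\sum_i a_i b_i=\langle\alpha,\beta\rangle_{\cF_b}$, giving $\cG_{F_b(D)}\cong\cF_b(D)$. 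The white surface case is identical, except that reversing the checkerboard colours turns every type A crossing into a type B crossing relative to $F_w$; the local contribution flips sign, yielding $\cG_{F_w(D)}\cong -\cF_w(D)$.

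The main obstacle will be carefully justifying that the linking computation truly localises to individual bands. Unlike in $S^3$, linking numbers in $\Sigma\times I$ are not symmetric and can detect the global topology of $\Sigma$ through \eqref{eqn-CTlinking}. The strategy is to arrange $\alpha$ and $\beta$ to be disjoint outside small neighbourhoods of shared bands and to exploit the fact that each band sits inside a Euclidean $3$-ball, where the local linking contribution agrees with the classical formula. Any non-local contributions to $\lk(\tau\alpha,\beta)$ must cancel under the symmetrisation in the definition of $\cG_{F_b}$; verifying this cancellation, and the sign conventions that distinguish type A from type B crossings, is the key technical step.
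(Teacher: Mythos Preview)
Your plan is essentially the paper's: identify $H_1(F_b;\ZZ)\cong\cF_b$ via the deformation retract, then reduce $\cG_{F_b}$ to a sum of local contributions at the crossings and check these match the Euclidean pairing. One correction: you cannot in general arrange $\alpha$ and $\beta$ to be disjoint outside neighbourhoods of shared bands, since on a positive-genus $\Si$ two curves with nonzero algebraic intersection cannot be separated. The paper instead allows the basis curves to intersect transversely \emph{away from the crossings} and proves a preliminary lemma (Lemma~\ref{lem-NonIntersectingCurves}) showing that at each such transverse intersection the contributions to $\lk(\tau\alpha_i,\alpha_j)$ and $\lk(\tau\alpha_j,\alpha_i)$ are $\pm 1$ of opposite sign, hence cancel in the symmetrisation defining $\cG_F$. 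This is exactly the cancellation you anticipate in your final paragraph; the point is that it holds pointwise at each intersection and is verified by a direct check with the over-passing description of linking (cf.~Figure~\ref{fig:GL-example}, right). With that lemma in hand, the remaining local computation at type A crossings and the sign flip for the white surface go through as you describe.
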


As a first step, we have the following general framework for computing the Gordon-Litherland pairing.  
Let $D \subseteq \Si$ be a checkerboard coloured link diagram, and let $F \subseteq \Si$ be one of the checkerboard surfaces. We can choose a basis for $H_1(F;\ZZ)$ represented by simple closed curves $\{\al_1,\ldots ,\al_n\}$ on $F$ which intersect transversely away from the crossings of $D$. 

\begin{lemma}\label{lem-NonIntersectingCurves}
If $\{\al_1,\ldots ,\al_n\}$ is a basis for $H_1(F;\ZZ)$ consisting of simple closed curves which intersect transversely away from the crossings of $D$, then $\cG_F$ is a sum of local contributions at the crossings of $D$.
\end{lemma}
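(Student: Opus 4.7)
The strategy is to compute $\cG_F(\al_i, \al_j)$ via the formula
$$\cG_F(\al_i,\al_j) = \tfrac{1}{2}\bigl(\lk(\tau\al_i,\al_j) + \lk(\tau\al_j,\al_i)\bigr)$$
by expanding each linking number as a signed count of projection over-crossings, then show that any contribution not supported at a crossing of $D$ is killed by symmetrisation.

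\textbf{Step 1 (Realising $F$ explicitly).} First I would place $F$ in $\Si\times I$ in a standard form: push each black (or white) region of $D\subseteq \Si$ to the level $\Si\times\{1/2\}$, and connect adjacent regions with half-twisted bands, one at each crossing of $D$. The surface is then flat and horizontal away from the crossings of $D$, with all non-trivial vertical structure concentrated in small neighbourhoods of the crossings. I may isotope the basis curves $\al_1,\dots,\al_n$ to lie on $F$, meeting transversely in finitely many points, all in the flat portion.

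\textbf{Step 2 (Linking as signed crossings).} Recall that for two disjoint oriented curves in $\Si\times I$, one can compute their linking number by projecting to $\Si$ and summing the signs in Table~\ref{tab:linking-number-crossings} over projection crossings where the first curve passes \emph{above} the second. Applying this to $\tau\al_i$ and $\al_j$ (after a generic vertical perturbation to make them disjoint), the projection crossings split into two kinds:
\begin{enumerate}
\item[(a)] \emph{Flat crossings:} points $p$ where $\al_i\cap\al_j\ni p$ on the flat part of $F$; here both sheets of $\tau\al_i$ project to the same arc as $\al_i$, and the over-sheet lies at height $1/2+\ep$ while $\al_j$ lies at height $1/2$.
\item[(b)] \emph{Band crossings:} projection crossings occurring inside the half-twisted band at a crossing $x$ of $D$, arising when both $\al_i$ and $\al_j$ traverse that band.
\end{enumerate}

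\textbf{Step 3 (Cancellation of flat contributions).} The main point is that type~(a) contributions cancel in $\cG_F$. At a flat crossing $p$, only the over-sheet of $\tau\al_i$ contributes to $\lk(\tau\al_i,\al_j)$, and its sign is exactly the local intersection number $(\al_i\cdot\al_j)|_p$ of $\al_i$ and $\al_j$ on $F$. Symmetrically, the contribution to $\lk(\tau\al_j,\al_i)$ at $p$ is $(\al_j\cdot\al_i)|_p = -(\al_i\cdot\al_j)|_p$, by antisymmetry of the intersection form on an oriented surface neighbourhood. Averaging, the two contributions cancel, so flat crossings make no net contribution to $\cG_F(\al_i,\al_j)$.

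\textbf{Step 4 (Locality at crossings of $D$).} The remaining contributions are of type~(b), each of which depends only on the local geometry inside a small neighbourhood of the corresponding crossing of $D$ (the shape of the half-twisted band together with which strands of $\al_i,\al_j$ pass through it). By bilinearity, $\cG_F$ extends from basis curves to all of $H_1(F;\ZZ)$ as a sum of such local contributions, which is the statement of the lemma.

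\textbf{Main obstacle.} The step requiring the most care is~(3): verifying that at a flat intersection point the two sheets of the normal pushoff interact with $\al_j$ in precisely the signed way described, so that the cancellation under $\al_i\leftrightarrow\al_j$ is exact and does not pick up any residual contribution from the ambient orientation of $\Si$ or from possible non-orientability of $F$ elsewhere. Once this local picture is pinned down (using that $F$ is locally a flat, two-sided neighbourhood of $p$ since $p$ avoids the bands), the rest of the argument is a straightforward bookkeeping.
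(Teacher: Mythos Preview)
Your proposal is correct and follows essentially the same approach as the paper: both argue that contributions to $\lk(\tau\al_i,\al_j)+\lk(\tau\al_j,\al_i)$ arise only at intersections of $\al_i$ with $\al_j$ and at crossings of $D$, and that at a flat intersection point the two linking contributions are $\pm 1$ of opposite sign and hence cancel under symmetrisation. The paper's proof is considerably terser (it simply asserts the cancellation and points to a figure), while you spell out the geometric setup and the sign analysis more carefully; your identification of the ``main obstacle'' is exactly the point the paper's figure is meant to resolve.
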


\begin{proof}
It is enough to show that the pairing matrix in the basis $\{\al_1,\ldots ,\al_n\}$ depends only on the local contributions at crossings.

By the over-passing description of linking numbers (Table~\ref{tab:linking-number-crossings}), contributions to $\lk(\tau\al_i, \al_j)+\lk(\tau\al_j,\al_i)$ arise only where $\al_i$ intersects $\al_j$, and where the surface twists at the crossings of $D$.

Therefore, we only need to show that the contributions from the intersections of $\al_i$ with $\al_j$ -- all of which are away from the crossings of $D$ -- are zero. Indeed, the contributions to $\lk(\tau\al_i, \al_j)$ and $\lk(\tau\al_j,\al_i)$ are both $\pm 1$ and of opposite sign, see Figure~\ref{fig:GL-example}.  
\end{proof}

\begin{proof}[Proof of Theorem~\ref{thm-equiv}]
The black checkerboard surface $F_b(D)$ deformation retracts to the black Tait graph $G_b(D)$, thus $H_1(F_b(D);\ZZ) \cong H_1(G_b(D);\ZZ)$. We need to prove that the pairing is the same.

Choose a basis of oriented circuits for $\cF_b(D)$, that is, a set of minimal cycles $\{c_1,\ldots ,c_n\}$ in $G_b(D)$. The pairing $\langle c_i, c_j \rangle$ is the algebraic count of common edges between $c_i$ and $c_j$, where the sign depends on whether the orientations coincide. 

Let $\iota \co G_b(D) \hookrightarrow F_b(D)$ be the inclusion map, and let $\al_i$ be a simple closed curve representing the isomorphic image $\iota_*(c_i)$ in $H_1(F_b;\ZZ)$, for $i=1,\ldots,n$. It is possible to represent $\iota_*(c_i)$ by a simple closed curve, since $c_i$ is a simple closed walk in $G_b(D)$. By a local deformation, we can ensure that the curves in $\{\al_1,\dots,\al_n\}$ meet the conditions of Lemma~\ref{lem-NonIntersectingCurves}. 

Thus, $\cG_{F_b(D)}(\al_i,\al_j)$ is a sum of local contributions at the crossings of $D$, which arise when $\al_i$ and $\al_j$ traverse through the same crossing. Note that this happens exactly when $c_i$ and $c_j$ share an edge corresponding to that crossing, so we only need to verify that the contribution to $\cG_{F_b(D)}(\al_i,\al_j)$ is 1 whenever $\al_i$ and $\al_j$ traverse in the same direction, and $-1$ when they traverse in opposite directions. This is shown in Figure~\ref{fig:GL-example}.

\begin{figure}[ht!]
	\centering
	\includegraphics[height=22mm]{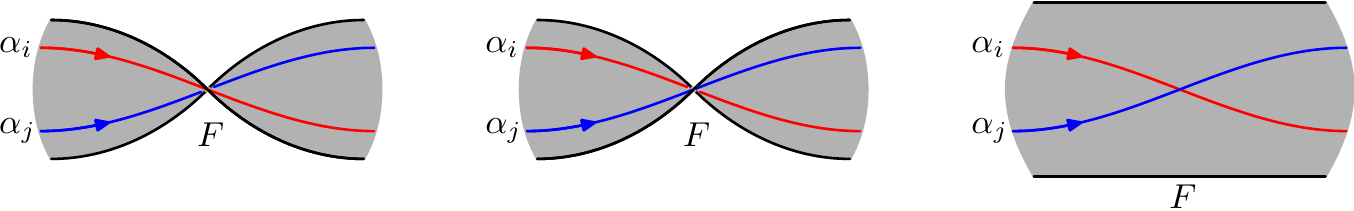}
	\caption{On left, $\al_i$ crosses over $\al_j$, with $\lk( \tau \al_i, \al_j) = 2$ and $\lk( \tau \al_j, \al_i) = 0$. In the middle, $\al_j$ crosses over $\al_i$, with $\lk( \tau \al_i, \al_j) = 0$ and $\lk( \tau \al_j, \al_i) = -2$. On the right, $\al_i$ and $\al_j$ intersect, with $\lk( \tau \al_i, \al_j)=1$ and $\lk( \tau \al_j, \al_i)=-1.$}
	\label{fig:GL-example}
\end{figure}

The same argument applies for the white checkerboard surface, but since the crossings are now of opposite type, the local contributions to $\cG_{F_w(D)}(\al, \be)$ have the opposite sign in each case. Hence, $\cG_{F_w(D)} \cong -\cF_w(D)$.
\end{proof}



\section{Main results}
\label{section-main}
In this section we present our main results. The first, in Section~\ref{subsection-Tait} is a new proof of the Tait conjectures for alternating links in thickened surfaces. It is modelled on \cite{Greene-2017} and uses the geometric characterization of alternating links in thickened surfaces \cite{Boden-Karimi-2023b}. 

In Section~\ref{subsection-GLLattices} we show that, for alternating links in thickened surfaces, the lattices of cuts and flows of the Tait graphs are link invariants. We also show that disc mutation does not change these lattices even for non-alternating links, and this is also true for the non-symmetric Gordon-Litherland form.  This implies in particular that, just as in the classical case, the $d$-invariants of alternating surface links are mutation invariants.

In Section~\ref{subsec:noncomplete} we show by example that the lattice of integer flows and the $d$-invariant are not complete invariants of the mutation class for links in thickened surfaces. We show that Gordon-Litherland form is in fact a strictly stronger mutation invariant. An outstanding question is to determine whether it is complete.

\subsection{Generalised Tait conjectures} \label{subsection-Tait}
In \cite{Greene-2017}, Greene gave a geometric proof of the first two Tait conjectures for classical links.
In this section, we adapt his argument and give a short proof of the first two Tait conjectures for virtual links. Our argument will use the characterisation of alternating links in thickened surfaces in \cite{Boden-Karimi-2023b}.

Other proofs of the generalised Tait conjectures for virtual knots have appeared recently. In \cite{Boden-Karimi-2019},  the Jones-Krushkal polynomial is used to establish an analogue of the Kauffman-Murasugi-Thistlethwaite result for links in thickened surfaces, and the first two Tait conjectures are derived from that. In \cite{Boden-Karimi-Sikora}, the skein bracket is used to give a strengthened statement of the  Tait conjectures. In \cite{Kindred-2022}, Kindred proves the virtual flyping theorem, which is the third Tait conjecture for virtual links, by extending his geometric proof of the flype theorem for classical links \cite{Kindred-2020}.

To prove the generalised Tait conjectures, recall one of the main results from \cite{Boden-Karimi-2023b}:

\begin{theorem}\label{thm-VirtualAltChar} 
Let $L$ be a link in $\Si \times I$, which bounds connected positive and negative definite spanning surfaces $F^+$ and $F^-$. Then $L$ is a non-split, alternating link, $\Si$ is a minimal genus surface to support $L$. Furthermore, there exists a reduced alternating diagram $D$ on $\Si$ whose black checkerboard surface $F_b$ is equivalent to $F^+$ and whose white checkerboard surface $F_w$ is equivalent to $F^-$.
\end{theorem}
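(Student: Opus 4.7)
The plan is to adapt Greene's geometric characterisation \cite{Greene-2017}, together with Howie's independent approach \cite{Howie-2017}, from the classical setting to $\Si \times I$. The goal is to reconstruct a reduced alternating diagram $D$ for $L$ on $\Si$ whose two checkerboard surfaces are exactly $F^+$ and $F^-$. First I would put $F^+$ and $F^-$ in general position so that they meet transversely only along their common boundary $L$. Away from $L$, any further intersection curves can be removed by an innermost-disc/annulus argument using the connectedness of each surface, after which $\hat F := F^+ \cup_L F^-$ is a closed (possibly non-orientable) surface embedded in $\Si \times I$. I would then isotope $F^+$ into $\Si \times [0, \tfrac{1}{2}]$ and $F^-$ into $\Si \times [\tfrac{1}{2}, 1]$ so that they meet exactly along $L \subseteq \Si \times \{\tfrac{1}{2}\}$, and perturb so that the projection $D = p(L) \subseteq \Si$ has only transverse double points.

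By construction, $p(F^+)$ and $p(F^-)$ together cover $\Si \setminus D$ and alternate across each arc, which provides a checkerboard colouring of $\Si \setminus D$ with $F^+$ black and $F^-$ white. The over/under data at each crossing comes from the side of $\Si \times \{\tfrac{1}{2}\}$ on which each piece of $F^\pm$ sits. To prove that $D$ is alternating, I would invoke the criterion that a checkerboard coloured cellular diagram is alternating if and only if all crossings have the same type (A or B). Using Theorem~\ref{thm-equiv} and the explicit local contributions computed in Figure~\ref{fig:GL-example}, any type B crossing in the black colouring would produce an explicit cycle $\al \in H_1(F^+;\ZZ)$ with $\cG_{F^+}(\al,\al) \leq 0$, contradicting positive definiteness; symmetrically, negative definiteness of $\cG_{F^-}$ enforces the same conclusion from the white side.

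For the remaining refinements: a nugatory crossing in $D$ would produce an orthogonal rank-one summand in $\cG_{F^\pm}$ with zero contribution, again contradicting definiteness, so $D$ is reduced. Non-splitness follows from connectedness of both $F^\pm$, together with the fact that a split diagram would force the pairings to decompose as direct sums whose definiteness constraints cannot simultaneously hold nontrivially. Minimality of $\Si$ is then immediate from \cite[Corollary 3.6]{Boden-Karimi-2023}, since $D$ is a reduced alternating diagram on $\Si$. The main obstacle is the geometric step of arranging $\hat F$ so that it splits cleanly across a mid-level $\Si \times \{\tfrac{1}{2}\}$: in the classical $S^2 \times I$ case this follows from a standard bridge-position argument, but in $\Si \times I$ one must also rule out essential curves of $\hat F \cap (\Si \times \{\tfrac{1}{2}\})$ that wind nontrivially around $\Si$. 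The hypotheses of connectedness and definiteness combined with a case analysis on the handle structure of $\hat F$ should eliminate these, but this is the step where the thickened surface setting genuinely departs from the classical one.
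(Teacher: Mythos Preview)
The paper does not prove this theorem: it is quoted from \cite{Boden-Karimi-2023b}, with the single remark afterward that $D$ is obtained by gluing $F^+$ and $F^-$ along $L$ ``to construct a new model of $\Si$.'' So the relevant comparison is with the argument in \cite{Boden-Karimi-2023b}, which extends \cite{Greene-2017} to thickened surfaces.

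Your outline has the right shape, but the gap you flag at the end is the decisive one, and your proposed workaround points in the wrong direction. You try to isotope $F^+$ below and $F^-$ above a fixed level $\Si\times\{\tfrac12\}$ inside the \emph{given} $\Si\times I$ and then project. Nothing in your sketch justifies this isotopy: an innermost-disc argument using only connectedness does not show that $\hat F = F^+\cup_L F^-$ is isotopic to a level surface, and without the definiteness hypotheses it need not be. The argument in \cite{Boden-Karimi-2023b} sidesteps this entirely. One first uses the definiteness of $\cG_{F^\pm}$ to obtain Euler-characteristic inequalities, as in \cite{Greene-2017}, which force $\hat F$ to be a closed orientable surface with $\chi(\hat F)=\chi(\Si)$. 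A regular neighbourhood of $\hat F$ then serves as a \emph{fresh} copy of $\Si\times I$ in which $L$ sits with $F^\pm$ as its checkerboard surfaces by construction; equivalence with the original pair $(\Si\times I,\Si\times\{0\})$ follows, and the diagram $D$ lives on $\hat F$ itself. Rebuilding the thickened surface around $\hat F$, rather than isotoping $\hat F$ to a prescribed level, is precisely what makes the higher-genus case go through, and it is where the definiteness hypothesis does its real work --- not merely in the crossing-type check you describe.

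One smaller point: invoking Theorem~\ref{thm-equiv} to rule out type~B crossings is circular as written, since that theorem assumes $D$ is already reduced and alternating. What you actually need are the local crossing contributions to $\cG_F$ from its proof (Lemma~\ref{lem-NonIntersectingCurves} and Figure~\ref{fig:GL-example}), which hold for any checkerboard coloured diagram; cite those directly.
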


The first statement of Theorem~\ref{thm-VirtualAltChar} is Theorem 1.1 of \cite{Boden-Karimi-2023b}. The second statement follows from the proof of that theorem, where $F^+$ and $F^-$ are glued along $L$ to construct a new model of $\Si$. 

\begin{theorem}[Generalised Tait Conjectures]\label{Thm-Tait}
    Any two connected, reduced alternating diagrams of the same non-split link $L \subseteq \Si \times I$ have the same crossing number and writhe.
\end{theorem}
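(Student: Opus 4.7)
The plan is to adapt Greene's strategy \cite{Greene-2017} to the surface setting, using the characterisation of alternating links in thickened surfaces. Given two reduced alternating diagrams $D$ and $D'$ of the non-split link $L \subseteq \Si \times I$, Theorem~\ref{thm-equiv} identifies $\cG_{F_b(D)} \cong \cF_b(D)$ as positive definite and $\cG_{F_w(D)} \cong -\cF_w(D)$ as negative definite (and likewise for $D'$), so $F_b(D), F_b(D')$ are positive definite spanning surfaces for $L$ while $F_w(D), F_w(D')$ are negative definite.

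The main step is to apply Theorem~\ref{thm-VirtualAltChar} to the \emph{mixed} pair $(F_b(D), F_w(D'))$, producing a reduced alternating diagram $D''$ on $\Si$ with $F_b(D'') \equiv F_b(D)$ and $F_w(D'') \equiv F_w(D')$ as spanning surfaces. Equivalent spanning surfaces induce isomorphic Gordon-Litherland pairings, and then Theorem~\ref{thm-equiv} gives lattice isometries $\cF_b(D) \cong \cF_b(D'')$ and $\cF_w(D') \cong \cF_w(D'')$. The Discrete Torelli Theorem produces $2$-isomorphisms between the corresponding Tait graphs, hence equalities of edge counts:
\[ c(D) = |E(G_b(D))| = |E(G_b(D''))| = c(D'') = |E(G_w(D''))| = |E(G_w(D'))| = c(D'), \]
which is invariance of the crossing number.

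For the writhe, the same $2$-isomorphisms give $\operatorname{rank} \cF_b(D) = \operatorname{rank} \cF_b(D'')$ and $\operatorname{rank} \cF_w(D') = \operatorname{rank} \cF_w(D'')$. Combined with the Euler identity $\operatorname{rank} \cF_b + \operatorname{rank} \cF_w = c + 2g(\Si)$ and the invariance of $c$, each of the two ranks is individually an invariant of $L$. In a reduced alternating diagram (with all crossings of type A) one has $w(D) = -\mu(F_b(D)) - \mu(F_w(D))$, and substituting $\mu(F) = \sig(\cG_F) - \si(L,F)$ from \eqref{eqn-linksignature} together with the definiteness of $\cG_{F_b}, \cG_{F_w}$ yields
\[ w(D) = \si(L, F_b) + \si(L, F_w) + \operatorname{rank} \cF_w(D) - \operatorname{rank} \cF_b(D). \]
The signatures $\si(L, F_b), \si(L, F_w)$ depend only on the two $S^*$-equivalence classes of spanning surfaces for $L$ (represented by the two checkerboard surfaces), and the ranks are now known to be link invariants, so $w(D)$ depends only on $L$.

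I expect the main subtlety to be verifying that Theorem~\ref{thm-VirtualAltChar} really applies to a mixed pair $(F_b(D), F_w(D'))$ with no hidden compatibility requirement beyond pos/neg definiteness and connectedness, and tracking sign conventions (type~I versus type~II crossings) precisely enough to derive the writhe formula above.
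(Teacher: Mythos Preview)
Your proposal is correct and follows essentially the same approach as the paper: the crossing-number argument via Theorem~\ref{thm-VirtualAltChar}, Theorem~\ref{thm-equiv}, and the Discrete Torelli Theorem is identical, and the writhe argument uses the same ingredients (the signature formula~\eqref{eqn-linksignature} applied through the intermediate diagram $D''$), only packaged slightly differently---the paper tracks $p(D)$ and $n(D)$ separately rather than expressing $w(D)$ as a single combination of invariants. One sign slip to watch: with the paper's conventions one has $\mu(F_b(D))=p(D)$ and $\mu(F_w(D))=-n(D)$, so $w(D)=\mu(F_b(D))+\mu(F_w(D))$ rather than $-\mu(F_b(D))-\mu(F_w(D))$; this flips the sign of your final formula but does not affect the argument.
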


\begin{proof}
    Let $L$ be a non-split link in $\Si \times I$, and assume that $D$ and $D'$ on $\Si$ are two connected, reduced alternating diagrams for $L$.

    After checkerboard colouring the two diagrams so that all crossings have type A, we have
    \begin{itemize}
    \item the black surface $F_b(D)$ is a positive definite spanning surface for $L$,
    \item the white surface $F_w(D')$ is a negative definite spanning surface for $L$.
    \end{itemize}

    By Theorem~\ref{thm-VirtualAltChar}, there exists a diagram $D''$ on $\Si$ whose black surface is equivalent to $F_b(D)$ and whose white surface is equivalent to $F_w(D')$.

    Since equivalent surfaces have isometric Gordon-Litherland pairings, and the Gordon-Litherland pairings on the checkerboard surfaces coincide with the lattices of integer flows of the corresponding Tait graphs, we have $\cF_b(D)\cong \cF_b(D'')$ and $\cF_w(D')\cong \cF_w(D'')$.

    By the Discrete Torelli Theorem~\ref{thm-DiscTorelli}, this implies that $G_b(D)$ and $G_b(D'')$ are 2-isomorphic as graphs, and in particular they must have the same number of edges. Thus, $D$ and $D''$ have the same number of crossings.

    In turn, the equivalence of $F_w(D')$ and $F_w(D'')$ implies, by the same argument, that $D'$ and $D''$ have the same number of crossings. Thus, all three diagrams have the same number of crossings, and this concludes the proof of the statement about crossing numbers.

    For the writhe, let $p(D)$ and $n(D)$ denote the number of positive and negative crossings in $D$, and similarly for $D', D''.$
    Since all crossings in $D$ and $D''$ are of type A, any type II crossing has positive writhe and incidence number $\eta_c=1$. Thus $\mu(F_b(D))=p(D)$ and $\mu(F_b(D''))=p(D'')$. 
    Using the fact that $\si(L,F_b(D)) =\si(L,F_b(D''))$ and appealing to
    equation \ref{eqn-linksignature}, we get
    $$\sig(\cG_{F_b(D)})-p(D)=\sig(\cG_{F_b(D'')})-p(D'').$$
   The two black surfaces $F_b(D)$ and $F_b(D'')$ are equivalent in the sense of Definition \ref{defn-ssequivalent}, so $\sig(\cG_{F_b(D)})=\sig(\cG_{F_b(D'')})$, and therefore $p(D)=p(D'')$. Since the crossing numbers of $D$ and $D''$ are equal, this also implies $n(D)=n(D')$.

    Repeating the same argument with the white surfaces $F_w(D')$ and $F_w(D'')$, we obtain $p(D')=p(D'')$ and $n(D')=n(D'')$. Since the writhe of $D$ is $p(D)-n(D),$ this shows that $D, D',$ and $D''$ must all have the same writhe.
\end{proof}


\subsection{Mutation and Gordon-Litherland lattices}\label{subsection-GLLattices}

In this section we prove that for reduced alternating link diagrams, the Gordon-Litherland linking form and pairing are link invariants that are unchanged by disc mutation.

Recall that for alternating link diagrams $D \subseteq \Si$,
our convention is to checkerboard colour $\Si \sm D$ so that all crossings of $D$ have type A. We use $F_b(D), F_w(D)$ to denote the black and white checkerboard surfaces according to this colouring.

\begin{theorem} \label{thm-isom}
Let $D$ and $D'$ be two reduced, alternating, cellular diagrams for the same non-split link $L \subseteq \Si \times I$.
Then there are isometries of the Gordon-Litherland pairings: $\cG_{F_b(D)} \cong \cG_{F_b(D')}$ and $\cG_{F_w(D)} \cong \cG_{F_w(D')}$.
\end{theorem}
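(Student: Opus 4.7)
The plan is to mirror the proof of the generalised Tait conjectures (Theorem~\ref{Thm-Tait}), with Theorem~\ref{thm-VirtualAltChar} as the central bridging tool. Given two reduced, alternating, cellular diagrams $D$ and $D'$ of $L \subseteq \Sigma \times I$, both checkerboard-coloured so that all crossings have type A, Theorem~\ref{thm-equiv} guarantees that $F_b(D), F_b(D')$ are connected positive definite spanning surfaces for $L$, while $F_w(D), F_w(D')$ are connected negative definite ones. In particular, the hypotheses of Theorem~\ref{thm-VirtualAltChar} are available for any mixed pair.

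First I would apply Theorem~\ref{thm-VirtualAltChar} with $F^+ = F_b(D)$ and $F^- = F_w(D')$ to produce an auxiliary reduced alternating cellular diagram $D''$ on $\Sigma$ whose black checkerboard surface is equivalent to $F_b(D)$ and whose white checkerboard surface is equivalent to $F_w(D')$. Equivalent spanning surfaces have isometric Gordon-Litherland pairings, so this yields
\[
\cG_{F_b(D)} \cong \cG_{F_b(D'')} \quad \text{and} \quad \cG_{F_w(D')} \cong \cG_{F_w(D'')}.
\]
Running the symmetric construction with $F^+ = F_b(D')$ and $F^- = F_w(D)$ produces a diagram $D'''$ with
\[
\cG_{F_b(D')} \cong \cG_{F_b(D''')} \quad \text{and} \quad \cG_{F_w(D)} \cong \cG_{F_w(D''')}.
\]
Via Theorem~\ref{thm-equiv}, these identifications translate to the flow lattices $\cF_b$ and $\cF_w$ of the respective Tait graphs.

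It remains to close these chains by identifying $\cG_{F_b(D'')}$ with $\cG_{F_b(D''')}$ and $\cG_{F_w(D'')}$ with $\cG_{F_w(D''')}$. Invoking Theorem~\ref{thm-equiv} once more and the Discrete Torelli Theorem~\ref{thm-DiscTorelli}, this amounts to 2-isomorphisms of the corresponding black and white Tait graphs of the two auxiliary diagrams. I anticipate this as the main obstacle. The cleanest route would be to establish a uniqueness-type statement associated to Theorem~\ref{thm-VirtualAltChar}: a reduced alternating diagram of $L$ should be constrained enough by the equivalence class of one of its checkerboard surfaces that the equivalence class of the other is also forced. In particular, $F_w(D'') \cong F_w(D')$ together with $F_b(D'') \cong F_b(D)$ would then yield $F_b(D'') \cong F_b(D')$ directly, collapsing the chains. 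Failing such a uniqueness result, I would attempt a graph-theoretic closure via chromatic duality (Theorem~\ref{thm-ChromDuality}), using the simultaneous control on $G_w(D'')$ and $G_w(D''')$ together with the cellular surface-dual relationship to recover the required 2-isomorphism between $G_b(D'')$ and $G_b(D''')$.
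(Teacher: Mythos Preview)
Your setup is right, but the closing step has a genuine gap. Introducing the second auxiliary diagram $D'''$ is circular: once you have $\cG_{F_b(D'')}\cong\cG_{F_b(D)}$ and $\cG_{F_b(D''')}\cong\cG_{F_b(D')}$, the task of identifying $\cG_{F_b(D'')}$ with $\cG_{F_b(D''')}$ is exactly the original problem of identifying $\cG_{F_b(D)}$ with $\cG_{F_b(D')}$. No progress has been made, and no amount of ``simultaneous control'' on the white graphs of $D''$ and $D'''$ changes this, since that control just restates what you already knew about $D'$ and $D$.

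The paper closes the loop using only the single diagram $D''$, and the mechanism is sharper than what you sketch. The equivalence of spanning surfaces $F_b(D)\cong F_b(D'')$ is an ambient homeomorphism of $(\Sigma\times I,\Sigma\times\{0\})$, so it preserves not only the flow lattice $\cF_b$ but also the sublattice $\cF^\circ_b=\ker\iota_*$. Chromatic duality (Theorem~\ref{thm-ChromDuality}) then transfers this to the \emph{other} colour: $d_{\cF_w(D)}\cong -d_{\cF^\circ_b(D)}\cong -d_{\cF^\circ_b(D'')}\cong d_{\cF_w(D'')}$. At this point one needs an ingredient you did not name: Greene's result \cite[Theorem~1.3]{Greene-2011} that the $d$-invariant of a flow lattice determines the $2$-isomorphism class of the graph, and hence the isometry class of the lattice. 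This yields $\cF_w(D)\cong\cF_w(D'')$, i.e.\ $\cG_{F_w(D)}\cong\cG_{F_w(D'')}\cong\cG_{F_w(D')}$. The black statement follows symmetrically, starting from $\cF_w(D')\cong\cF_w(D'')$. Your vague invocation of chromatic duality is pointing in the right direction, but without the passage through $\cF^\circ_b$ and Greene's completeness theorem there is no argument.
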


\begin{proof}
The black checkerboard surface $F_b(D)$ of the first diagram is positive definite, and
the white checkerboard surface $F_w(D')$ of the second is negative definite. 
We apply Theorem \ref{thm-VirtualAltChar} with $F^+ = F_b(D)$ and $F^- = F_w(D')$ to produce a diagram $D''$ for $L$ whose black surface is equivalent to $F^+ = F_b(D)$
and whose white surface is equivalent to $F^- =F_w(D')$. 
Let $F_b(D'')$ and $F_w(D'')$ be the black and white checkerboard surfaces of the diagram $D''$.
Since equivalent surfaces have isometric Gordon-Litherland pairings, we have 
\begin{equation}\label{eq-GLforms}
\cG_{F_b(D)} \cong \cG_{F_b(D'')} \, \text{ and } \, \cG_{F_w(D')} \cong \cG_{F_w(D'')}.
\end{equation}

The Gordon-Litherland pairings on the checkerboard surfaces coincide with the lattices of integer
flows of the corresponding Tait graphs, we have $\cF_b(D) \cong \cF_b(D'')$. Since the equivalence also preserves the kernel of the inclusion map into $\Si$, we also have $\cF^\circ_b(D) \cong \cF^\circ_b(D'')$. Thus, the $d$-invariants of these lattices are isomorphic as well; for short we write $d_{\cF^\circ_b(D)}\cong d_{\cF^\circ_b(D'')}$. In turn, by Theorem \ref{thm-ChromDuality},
$d_{\cF^\circ_b(D)}\cong d_{\cF_w(D)}$, and $d_{\cF^\circ_b(D'')}\cong d_{\cF_w(D'')}$. Thus, 
$d_{\cF_w(D)}\cong d_{\cF_w(D'')}.$

In \cite[Theorem 1.3]{Greene-2011} Greene shows that the $d$-invariant of the lattice of integer flows of a graph uniquely determines the 2-isomorphism class of the graph, and in turn, the isometry class of the lattice of integer flows. Therefore,
$\cF_w(D) \cong \cF_w(D''),$
and by Theorem~\ref{thm-equiv}, and by Equation \eqref{eq-GLforms},
\[
\cG_{F_w(D)} \cong \cG_{F_w(D'')} \cong \cG_{F_w(D')}.
\]

The same argument from $\cF_w(D')\cong \cF_w(D'')$ shows
$\cG_{F_b(D)} \cong \cG_{F_b(D'')} \cong \cG_{F_b(D')}.$
This completes the proof.
 \end{proof}

\begin{corollary} \label{cor-invariance}
 The lattices of integer flows $\cF_b(D)$ and $\cF_w(D)$ are link invariants for non-split alternating links on surfaces.  
\end{corollary}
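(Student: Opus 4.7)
The plan is to deduce this corollary directly from Theorem~\ref{thm-isom} combined with Theorem~\ref{thm-equiv}. The content of the two-line argument should be: isometry of Gordon-Litherland pairings plus the identification of these pairings with the flow lattices of the Tait graphs.

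First I would reduce to the case of reduced, alternating, cellular diagrams. Since the statement concerns a non-split alternating link $L \subseteq \Si \times I$, any alternating diagram for $L$ may be made reduced by successively removing nugatory crossings (which does not change the link up to equivalence), and cellularity comes for free for a non-split alternating link on a surface that supports it. Two reduced alternating diagrams $D, D'$ of $L$ therefore fall into the hypotheses of Theorem~\ref{thm-isom}, which yields
\[
\cG_{F_b(D)} \cong \cG_{F_b(D')} \quad \text{and} \quad \cG_{F_w(D)} \cong \cG_{F_w(D')}.
\]

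Second, I would apply Theorem~\ref{thm-equiv} to translate these isometries of Gordon-Litherland pairings into isometries of flow lattices. With the standard convention that alternating diagrams are checkerboard coloured so that every crossing has type~A, Theorem~\ref{thm-equiv} gives
\[
\cG_{F_b(D)} \cong \cF_b(D), \quad \cG_{F_w(D)} \cong -\cF_w(D),
\]
and the analogous statements for $D'$. Composing these isometries with those coming from Theorem~\ref{thm-isom} immediately yields $\cF_b(D) \cong \cF_b(D')$ and $\cF_w(D) \cong \cF_w(D')$, which is precisely the desired invariance. Note that changing signs on both sides is harmless for the white lattice, since an isometry $-\cF_w(D) \cong -\cF_w(D')$ is the same data as an isometry $\cF_w(D) \cong \cF_w(D')$.

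Since the heavy lifting is done inside Theorem~\ref{thm-isom} and Theorem~\ref{thm-equiv}, there is no real obstacle here; the only step requiring any care is verifying that the conventions on checkerboard colouring match across the two theorems (so that the identification with $\cF_b$ and $-\cF_w$ holds on the nose), but this is built into the standing convention on alternating diagrams.
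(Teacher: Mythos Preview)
Your proposal is correct and matches the paper's intended argument: the corollary is stated immediately after Theorem~\ref{thm-isom} with no separate proof, so the paper expects the reader to combine Theorem~\ref{thm-isom} with Theorem~\ref{thm-equiv} exactly as you do. One small caution: your reduction step ``make any alternating diagram reduced by removing nugatory crossings'' is slightly delicate in thickened surfaces, since essential nugatory crossings are not removable by untwisting (cf.~\S\ref{subsec:Links}); but the corollary is implicitly about reduced, cellular alternating diagrams already, so this does not affect the argument.
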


Next, we prove that $\cG_{F_b}$, and as a result $\cF_b$, are invariant under disc mutation, in fact, disc mutation leaves the pairing matrix unchanged. We prove this at the link diagram level for any cellular checkerboard coloured link diagram on $\Si$: it is not necessary to assume that the diagrams are alternating.
    
\begin{theorem}\label{thm-mutation1}
	Let $D, D' \subseteq \Si$ be two cellular checkerboard coloured link diagrams related by a sequence of disk mutations. Then $\cG_{F_b(D)} \cong \cG_{F_b(D')}$ and $\cG_{F_w(D)} \cong \cG_{F_w(D')}$.
\end{theorem}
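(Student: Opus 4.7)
The plan is to reduce to the case of a single disc mutation taking $D$ to $D'$ via an involution $\tau$ of a mutation disc $B \subset \Si$; the general case follows by induction. Let $\widetilde B = B \times I \subset \Si \times I$ be the associated $3$-ball. I would first verify that $\tau$ extends to an orientation-preserving involution $\widetilde\tau \co \widetilde B \to \widetilde B$ fixing $\partial \widetilde B$ setwise and carrying $L \cap \widetilde B$ onto $L' \cap \widetilde B$. Because mutation preserves the checkerboard colouring and the crossing types A and B, the extended involution $\widetilde\tau$ sends $F_b(D) \cap \widetilde B$ to $F_b(D') \cap \widetilde B$, and analogously for the white surfaces.

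Next, I would build an abstract homeomorphism $\Phi \co F_b(D) \to F_b(D')$ which is the identity on $F_b(D) \sm \mathrm{int}(\widetilde B)$ and is $\widetilde\tau$ on $F_b(D) \cap \widetilde B$, using that both surfaces are assembled from the same outside piece together with homeomorphic inside pieces glued along the two boundary arcs $F_b \cap \partial \widetilde B$. The induced map on first homology, $\Phi_* \co H_1(F_b(D);\ZZ) \to H_1(F_b(D');\ZZ)$, is the candidate isometry of Gordon-Litherland pairings.

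To verify the isometry I would apply Lemma~\ref{lem-NonIntersectingCurves}: choose representatives $\al_i$ of a basis of $H_1(F_b(D);\ZZ)$ by simple closed curves meeting transversely away from the crossings of $D$, and compute $\cG_{F_b(D)}(\al_i,\al_j)$ as a sum of local contributions, one for each crossing through which both $\al_i$ and $\al_j$ pass, with sign determined by the crossing type and the relative orientations of the two curves. Outside $\widetilde B$, the contributions for $\Phi(\al_i)$ and $\Phi(\al_j)$ are literally the same as for $\al_i$ and $\al_j$. Inside $\widetilde B$, the involution $\widetilde\tau$ bijects crossings of $D$ with crossings of $D'$, preserves crossing type by hypothesis, and, being orientation-preserving on a $3$-ball, preserves the relative orientations of the two curves at each crossing. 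Hence the local contributions match term by term, giving $\cG_{F_b(D)} \cong \cG_{F_b(D')}$. The identical argument with $F_w$ in place of $F_b$ yields $\cG_{F_w(D)} \cong \cG_{F_w(D')}$.

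The main obstacle will be the careful treatment of the boundary arcs $F_b \cap \partial \widetilde B$, since $\widetilde\tau$ restricted to $\partial \widetilde B$ is generally not the identity---for instance, a flip typically swaps the two black arcs on $\partial B$. This forces a reparameterisation of the gluing when assembling $\Phi$; however, because the boundary piece is a disjoint union of two contractible arcs, any such reparameterisation still produces a surface homeomorphic to $F_b(D)$. A related subtlety is that basis cycles typically cross $\partial \widetilde B$, but this can be handled by choosing representatives transverse to $\partial \widetilde B$ at points away from the crossings of $D$, so that the local-contribution decomposition of Lemma~\ref{lem-NonIntersectingCurves} applies cleanly on each side and no spurious contributions appear on the boundary itself.
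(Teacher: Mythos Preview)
Your local-contribution verification is exactly the right endgame, and it mirrors the paper's final step. The gap is in the construction of $\Phi$. When $\tau$ swaps the two black arcs on $\partial B$ (which happens for most of the mutation involutions), the map ``identity outside, $\widetilde\tau$ inside'' does \emph{not} descend to a continuous map $F_b(D)\to F_b(D')$: on $b_1$ the outside rule sends a point to itself while the inside rule sends it into $b_2$. Your proposed ``reparameterisation of the gluing'' cannot repair this locally---you would need a self-homeomorphism of the outside piece $X$ exchanging the two arcs $b_1,b_2$, and such a map has no reason to exist, nor would it be the identity on the crossings of $D$ outside $B$. Even though $F_b(D)$ and $F_b(D')$ are abstractly homeomorphic, an arbitrary homeomorphism will not send $\al_i$ to a curve that agrees with $\al_i$ outside $B$ and with $\tau(\al_i)$ inside, so your crossing-by-crossing comparison loses its footing.

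The paper avoids building a global surface homeomorphism altogether. Instead it proves a short Mayer--Vietoris lemma (Lemma~\ref{lem-MutationBasis}): one can choose a basis $\{\al_1,\dots,\al_n\}$ for $H_1(F;\ZZ)$ satisfying the hypotheses of Lemma~\ref{lem-NonIntersectingCurves} in which \emph{at most one} curve $\al_1$ meets $\partial B$, and that one meets it in only two points. Then every other $\al_i$ lies entirely inside or entirely outside $B$, and its ``mutant image'' is simply $\tau(\al_i)$ or $\al_i$; the single traversing curve $\al_1$ has a one-arc intersection with $B$, so its mutant image is the outside arc joined to $\tau(\al_1\cap B)$ by short segments along $b_1,b_2$. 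These mutant curves automatically form a basis for $H_1(F';\ZZ)$ by the same Mayer--Vietoris computation, and now your local-contribution argument goes through verbatim, since self-pairings only see crossing types and the cross-pairings never involve two curves interacting across $\partial B$. In short: keep your verification, but replace the global $\Phi$ with the adapted basis of Lemma~\ref{lem-MutationBasis}.
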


The proof relies on the following technical lemma:

\begin{lemma}\label{lem-MutationBasis}
Given a link diagram $D\subseteq \Si$, checkerboard surface $F$, and mutation disc $B$, there exists a basis for $H_1(F;\ZZ)$ which satisfies the requirements of Lemma~\ref{lem-NonIntersectingCurves}, so that at most one of the simple closed curves meets $\partial B$. Moreover, if there is such a curve, it meets $\partial B$ in at most two points.
\end{lemma}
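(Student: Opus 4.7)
The plan is to decompose $F$ along $\partial B$ and bound, via a Mayer--Vietoris argument, the number of homology classes in $H_1(F;\ZZ)$ that require a representative crossing $\partial B$. This bound turns out to be at most one, and such a representative can then be constructed geometrically so as to cross $\partial B$ in exactly two points.

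First I would set $F_1 = F\cap B$ and $F_2 = F \cap (\Si \setminus \operatorname{int} B)$. Since $\partial B$ meets $L = \partial F$ transversely in four points, $\partial B$ is cut into four arcs whose checkerboard colours alternate; exactly two of them lie in $F$, call them $a_1$ and $a_2$. Thus $F_1 \cap F_2 = a_1 \sqcup a_2$ is a disjoint union of two contractible arcs. The Mayer--Vietoris sequence for $F = F_1 \cup F_2$ gives
\begin{equation*}
0 \to H_1(F_1) \oplus H_1(F_2) \to H_1(F) \to \ZZ^2 \xrightarrow{\phi} H_0(F_1) \oplus H_0(F_2) \to H_0(F) \to 0.
\end{equation*}
Since $F$ is connected, $H_0(F) = \ZZ$, and a rank count gives $\operatorname{rank}\phi = c_1 + c_2 - 1$, where $c_i \geq 1$ counts the components of $F_i$. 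Hence $\dim \ker \phi = 3 - c_1 - c_2 \leq 1$, and this equals the rank of $H_1(F) / (H_1(F_1) \oplus H_1(F_2))$. So at most one basis element of $H_1(F)$ cannot be realised by a curve contained entirely in $F_1$ or $F_2$.

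To construct the basis, first choose simple closed curves representing bases of $H_1(F_1)$ and $H_1(F_2)$ in the interiors of those subsurfaces, so they are disjoint from $\partial B$. If $\dim \ker \phi = 0$ these already generate $H_1(F)$ and no curve touches $\partial B$. Otherwise, $c_1 = c_2 = 1$, and the missing generator is a Mayer--Vietoris cycle, realised concretely as follows: pick an embedded arc $\delta_1 \subset F_1$ joining a point $p_1 \in a_1$ to a point $p_2 \in a_2$, and an embedded arc $\delta_2 \subset F_2$ from $p_2$ back to $p_1$. The concatenation $\delta_1 \cup \delta_2$ is a simple closed curve meeting $\partial B$ in exactly the two points $p_1, p_2$. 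A final small generic isotopy within $F$ arranges that all of the chosen curves meet each other transversely and avoid the finite set of crossings of $D$.

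The main obstacle I anticipate is organisational rather than conceptual: the arcs $\delta_1, \delta_2$ and the basis curves of $H_1(F_i)$ must be arranged in the right order so that (i) the resulting new curve remains simple, (ii) all pairwise intersections are transverse, and (iii) every intersection misses the crossings of $D$. These are routine general-position arguments on surfaces; the substantive point is the Mayer--Vietoris rank count, which is what makes the bound \emph{at most one curve meets $\partial B$} tight.
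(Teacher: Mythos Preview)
Your proposal is correct and follows essentially the same route as the paper's proof: both decompose $F$ along $\partial B$ into $F\cap B$ and $F\setminus B$, apply Mayer--Vietoris to see that $H_1(F_1)\oplus H_1(F_2)$ injects into $H_1(F)$ with cokernel of rank at most one, and then realise the possible extra generator by a single arc through $B$. Your rank count via $\dim\ker\phi = 3 - c_1 - c_2$ is a slightly more explicit version of the paper's case split on whether $F\cap B$ has one or two components, and your concatenation $\delta_1\cup\delta_2$ spells out what the paper abbreviates as ``this generator can be chosen so that it intersects $B$ in a single path.''
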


\begin{proof}
Let $X=N_{\epsilon} (F\cap B)$ and $Y=N_{\epsilon} (F \setminus B)$ be small neighbourhoods of the intersection of the mutation disc and its complement with $F$. By the Mayer-Vietoris Theorem, $H_1(X;\ZZ) \oplus H_1(Y;\ZZ)$ injects into $H_1(F;\ZZ)$. If $F\cap B$ has two connected components, then this is an isomorphism, and thus a basis can be chosen such that none of the simple closed curves intersect $\partial B$. If $F\cap B$ is connected, then $H_1(F;\ZZ)/\im\big(H_1(X;\ZZ) \oplus H_1(Y;\ZZ)\big)\cong \ZZ$, and thus a single generator of $H_1(F; \ZZ)$ must run through $B$ and thus meet $\partial D$. Further, this generator can be chosen so that it intersects $B$ in a single path.
\end{proof}

\begin{proof}[Proof of Theorem~\ref{thm-mutation1}]
	It is enough to prove this in the case where $D$ and $D'$ are related by a single mutation with mutation disc $B$. Note that disc mutation preserves the checkerboard colouring, and let $F'$ denote the mutant checkerboard surface.
 
 By Lemma \ref{lem-MutationBasis} we choose a basis of simple closed curves $\{\al_1,\ldots,\al_n\}$ for $H_1(F;\ZZ)$ whose only intersection points are away from the crossings of $D$, and at most one of the basis elements -- assume this is $\al_1$ -- meet the boundary of the mutation disk. That is, every $\al_i$ for $i\neq 1$ is either disjoint from $B$ or completely contained in it.

 Since at most $\al_1$ traverses $B$, and if so, then $|\al_1\cap \partial B|=2$, the disc mutation does not change the connectivity of the curves $\al_i$. We denote the mutant image of each $\al_i$ also by $\al_i$, and thus $\{\al_1,\dots, \al_n\}$ form a basis for the first homology of $H_1(F';\ZZ)$, satisfying the same conditions. 
 
 Notice that, under mutation, the type of the crossing ($A$ or $B$) does not change. The self-pairing of a curve is a count of the number of type $A$ crossings minus the number of type $B$ crossings. Since the crossing types do not change under mutation, it follows that $\cG_{F}(\al_i, \al_i) = $$\cG_{F'}(\al_i, \al_i)$ for all $i=1,\dots, n$.
 
\begin{figure}[ht!]
	\centering
	\includegraphics[height=16mm]{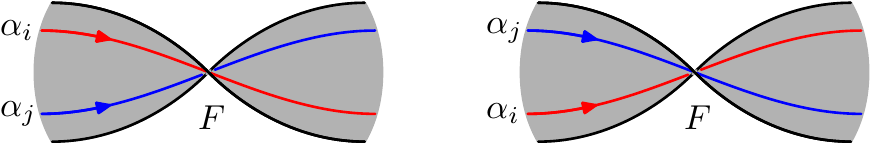}
	\caption{The effect of a mutation on the local contribution at a crossing.}
	\label{fig-MutationPairing}
\end{figure}

 Now consider two basis elements $\al_i, \al_j$. The pairing $\cG_{F}(\al_i, \al_j)$ is determined by local contributions at the crossings traversed by both $\al_i$ and $\al_j$. If none of those crossings are inside the mutation disk, then  their pairing is unchanged. Suppose  that $\al_i$ and $\al_j$ traverse a crossing contained in the mutation disk. It is a simple case analysis to check that the contribution to $\cG_F(\al_i, \al_j)$ is unchanged. On the left of Figure~\ref{fig-MutationPairing} (left) shows $\alpha_i$ passing over $\alpha_j$, thus 
 $\lk(\tau\al_i,\al_j) =2$ and $\lk(\tau\al_j,\al_i) =0$. On the right is the result of applying mutation given by a rotation of angle $\pi$ about a horizontal line. Now $\alpha_j$ passes over $\alpha_i$, and thus we  $\lk(\tau\al_i,\al_j) =0$ and $\lk(\tau\al_j,\al_i) =2.$ The other cases are verified similarly, and it follows that
  $\cG_{F}(\al_i, \al_j) = \cG_{F'}(\al_i, \al_j)$. This completes the proof.
\end{proof}

The next goal of this section is to prove that disc mutation also leaves the Gordon-Litherland linking form $\cL_F$ on a checkerboard surface $F$ unchanged. To do so, we use the following formula relating the linking form to the Gordon-Litherland pairing and intersection form.

\begin{lemma} \label{lemma-link}
    Let $D$ be a cellular, checkerboard coloured link diagram in $\Si$ with checkerboard surface $F\subseteq \Si \times I$, and let $p\co \Si\times I \to \Si$ denote the projection map.
	Let $\al$ and $\be$ be closed curves in $F$ with finitely many transverse intersections. Then the Gordon-Litherland linking form and pairing satisfy
 \begin{equation}\label{eq-PairingForm}
\cL_{F}(\al, \be) = \cG_{F}(\al, \be) + p_{*}(\al) \cdot p_{*}(\be).
\end{equation}
\end{lemma}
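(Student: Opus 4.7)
The plan is to reduce \eqref{eq-PairingForm} to the antisymmetric identity
\[
\lk(\tau\alpha,\beta)-\lk(\tau\beta,\alpha)=2\,p_{*}(\alpha)\cdot p_{*}(\beta),
\]
which is equivalent to the lemma because Definitions \ref{def-GLForm} and \ref{def-GLPairing} give $\cL_{F}(\alpha,\beta)-\cG_{F}(\alpha,\beta)=\tfrac{1}{2}(\lk(\tau\alpha,\beta)-\lk(\tau\beta,\alpha))$.

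For this, I would first apply the Cimasoni--Turaev formula \eqref{eqn-CTlinking} to the disjoint pair $J=\tau\alpha\subseteq\partial N(F)$ and $K=\beta\subseteq F$, obtaining
\[
\lk(\tau\alpha,\beta)-\lk(\beta,\tau\alpha)=p_{*}(\tau\alpha)\cdot p_{*}(\beta)=2\,p_{*}(\alpha)\cdot p_{*}(\beta),
\]
where $p_{*}(\tau\alpha)=2\,p_{*}(\alpha)$ because $\tau\alpha$ double-covers $\alpha$ under the projection $\partial N(F)\to F$. It then suffices to establish the symmetry $\lk(\beta,\tau\alpha)=\lk(\tau\beta,\alpha)$. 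In the orientable case, $\tau\alpha$ is homologous to $\alpha^{+}+\alpha^{-}$, the two normal pushoffs of $\alpha$. Translating the pair $(\beta,\alpha^{+})$ downwards by $\varepsilon$ in the $I$-direction moves it continuously through disjoint pairs to $(\beta^{-},\alpha)$, so $\lk(\beta,\alpha^{+})=\lk(\beta^{-},\alpha)$, and symmetrically $\lk(\beta,\alpha^{-})=\lk(\beta^{+},\alpha)$; summing yields the required symmetry, and combining with the previous step finishes the argument.

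The main obstacle is the non-orientable case, where the pushoffs $\alpha^{\pm}$ are not globally defined. Two ways to resolve this suggest themselves: pass to the orientation double cover of $F$ and transfer back, or abandon the Cimasoni--Turaev route and prove \eqref{eq-PairingForm} directly as a sum of local contributions. For the latter, one notes that intersections of $p_{*}(\alpha)$ and $p_{*}(\beta)$ in $\Sigma$ come from either transverse intersections of $\alpha,\beta$ on $F$ (away from crossings of $D$) or points where both curves traverse a twisted band at a crossing of $D$. In either case the local contributions to $\cL_{F}(\alpha,\beta)$, $\cG_{F}(\alpha,\beta)$, and $p_{*}(\alpha)\cdot p_{*}(\beta)$ can be read off from Figure~\ref{fig:GL-example}, and a short case analysis---entirely analogous to the one in the proof of Theorem~\ref{thm-equiv}---verifies the identity locally, avoiding the global orientability issue entirely.
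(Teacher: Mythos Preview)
Your reduction to the antisymmetric identity
\[
\tfrac{1}{2}\bigl(\lk(\tau\alpha,\beta)-\lk(\tau\beta,\alpha)\bigr)=p_{*}(\alpha)\cdot p_{*}(\beta)
\]
is correct and is also the first step in the paper's proof. From there the paper establishes this identity directly by a local-contribution check: after deforming so that $\alpha$ and $\beta$ meet only away from crossings of $D$, one verifies the equality at (i) transverse intersections of $\alpha,\beta$ on $F$, and (ii) crossings of $D$ traversed by both curves, using Figures~\ref{fig:GL-example} and~\ref{fig:int-example}. This is exactly your fallback route, so that part of your proposal coincides with the paper's argument.

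Your primary route---applying \eqref{eqn-CTlinking} to $(\tau\alpha,\beta)$ and then proving the symmetry $\lk(\beta,\tau\alpha)=\lk(\tau\beta,\alpha)$ by a vertical shift---is a genuinely different and more conceptual approach. The shift argument is clean when the normal line bundle of $F$ is trivial along the curves in question, but checkerboard surfaces are typically non-orientable (the half-twisted bands at crossings guarantee this), so this hypothesis fails in the case of interest. Your diagnosis of the obstacle is accurate. Of your two proposed fixes, the local-analysis one is both the simpler and the one the paper takes; lifting to the orientation double cover of $F$ can be made to work but adds bookkeeping without shortening anything. In summary: the proposal is correct, your fallback \emph{is} the paper's proof, and the Cimasoni--Turaev route is an attractive alternative that is cleanest precisely when it is least needed.
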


\begin{proof}
	 We first claim that algebraic intersection numbers and linking numbers satisfy
\begin{equation}\label{eq-IntLink}
 p_{*}(\al) \cdot p_{*}(\be) = \tfrac{1}{2}\big(\lk(\tau \al, \be) - \lk(\tau \be, \al)\big).
\end{equation}
By local deformations, we can ensure that $\al$ and $\be$ only intersect away from crossings. At these intersections it is easy to verify that the contribution to $p_{*}(\al) \cdot p_{*}(\be)$ and $\tfrac{1}{2}\big(\lk(\tau \al, \be) - \lk(\tau \be, \al)\big)$ are equal, cf.~Figure \ref{fig:GL-example} (right).
 
    Next, we examine the local contributions at the crossings of $D$. Note that since $\al$ and $\be$ do not intersect at the crossings, the projections $p(\al)$ and $p(\be)$ intersect transversely on $\Si$, as shown on the right of Figure~\ref{fig:GL-example}.
	At the crossings, it is elementary to verify that the local contributions to both sides of \eqref{eq-IntLink} are equal; see Figure~\ref{fig:int-example}.

\begin{figure}[hbt!]
	\centering
	\includegraphics[height=16mm]{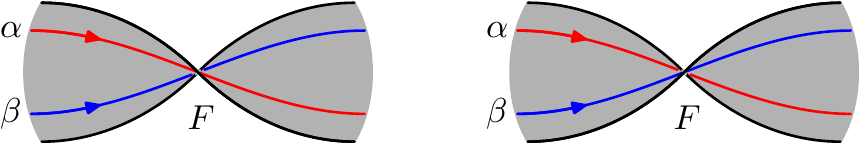}
	\caption{The local contributions are $\lk(\tau \al,\be)=2$ and $\lk(\tau \be, \al) =0$ on left, and $\lk(\tau \al,\be)=0$ and $\lk(\tau \be, \al) =-2$ on right. In either case, we have  $\lk(\tau \al,\be)-\lk(\tau \be, \al) =1=p_*(\al) \cdots p_*(\be).$ }
	\label{fig:int-example}
\end{figure}

Now apply Equation \eqref{eq-IntLink}   to complete the proof:
\begin{align*}
\cL_{F}(\al, \be) & =  \tfrac{1}{2}\left(\lk(\tau \al, \be) + \lk(\tau \be, \al)\right) + \tfrac{1}{2} \left(\lk(\tau \al, \be) - \lk(\tau \be, \al)\right)\\
& = \cG_{F}(\al, \be) + p_{*}(\al) \cdot p_{*}(\be). \qedhere
\end{align*}
\end{proof}

\begin{remark}
In \cite{Boden-Chrisman-Karimi}, the Gordon-Litherland pairing is defined by the formula
$\cG_{F}(\al,\be) = \lk(\tau(\al), \be)-p_{*}(\al) \cdot p_{*}(\be)$.
\end{remark}

The following corollary greatly simplifies the computation of the Gordon-Litherland linking form in examples (cf.~\cite[Remark 3.2]{Boden-Karimi-2023}).

\begin{corollary}\label{cor-OutsideKernel}
  Let $\iota\co F \to \Si \times I$ be the inclusion, inducing $\iota_{*}\co H_1(F;\ZZ) \to H_1(\Si \times I;\ZZ)$.
  If $\al \in \ker \iota_{*}$, then $\cL_{F}(\al, \be) = \cL_{F}(\be, \al)= \cL_{F}(\al, \be)$ for any $\be \in H_1(F;\ZZ)$. In particular, the restriction of $\cL_F$ to $\ker \iota_*$ is symmetric.
\end{corollary}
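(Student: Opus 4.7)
The plan is to derive the corollary as a direct consequence of Lemma~\ref{lemma-link}, which gives the decomposition $\cL_F(\al, \be) = \cG_F(\al, \be) + p_*(\al) \cdot p_*(\be)$. Since $\cG_F$ is symmetric by Definition~\ref{def-GLPairing}, the asymmetric part of $\cL_F$ is carried entirely by the intersection term $p_*(\al) \cdot p_*(\be)$. Hence, to prove $\cL_F(\al, \be) = \cL_F(\be, \al)$, it suffices to show that $p_*(\al) \cdot p_*(\be) = 0$ whenever $\al \in \ker \iota_*$.

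To establish this, I would observe that the projection $p \co \Si \times I \to \Si$ is a deformation retraction, so the induced map $p_* \co H_1(\Si \times I;\ZZ) \to H_1(\Si;\ZZ)$ is an isomorphism. Functoriality of $H_1$ applied to the composition $F \hookrightarrow \Si \times I \to \Si$ gives $p_* \circ \iota_* = (p \circ \iota)_*$, so that the symbol $p_*(\al)$ appearing in Lemma~\ref{lemma-link} really means $p_*(\iota_*(\al))$. Consequently, if $\al \in \ker \iota_*$, then $\iota_*(\al) = 0$ and therefore $p_*(\al) = 0$ in $H_1(\Si;\ZZ)$. The intersection pairing then gives $p_*(\al) \cdot p_*(\be) = 0$ for every $\be \in H_1(F;\ZZ)$, and substituting back into Lemma~\ref{lemma-link} yields
$\cL_F(\al, \be) = \cG_F(\al, \be) = \cG_F(\be, \al) = \cL_F(\be, \al)$,
where the middle equality is the symmetry of $\cG_F$ and the last equality uses $p_*(\be) \cdot p_*(\al) = 0$ as well.

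There is no substantial obstacle: the argument is a short computation once Lemma~\ref{lemma-link} is available. The only point that needs a small amount of care is the mild abuse of notation in which $p_*$ is applied to classes of $F$ even though $p$ is defined on $\Si \times I$; this is cleanly resolved by factoring $p_*$ through $\iota_*$ at the level of first homology, after which the restriction of $\cL_F$ to $\ker \iota_*$ inherits symmetry from $\cG_F$ because the antisymmetric contribution vanishes identically on the kernel.
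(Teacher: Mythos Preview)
Your proposal is correct and follows essentially the same approach as the paper: both arguments use Lemma~\ref{lemma-link} to reduce the symmetry of $\cL_F$ to the vanishing of the intersection term $p_*(\al)\cdot p_*(\be)$, which is immediate since $\al\in\ker\iota_*$ forces $p_*(\al)=0$. Your additional remark clarifying the factorization $p_*=(p\circ\iota)_*$ is a nice touch but does not change the argument.
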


\begin{proof}
	Assume $\al \in \ker \iota_{*}$. Then $\al$ is null-homologous in $\Si \times I$, so $p_*(\al)$ is null-homologous in $\Si$. It follows that $p_{*}(\al) \cdot p_{*}(\be) = 0$. The proof is completed by applying Lemma~\ref{lemma-link}, and the observation that $\cG_{F}(\al, \be)$ is symmetric.
\end{proof}

Now we are able to show that $\cL_{F}$ is invariant under mutation.

\begin{theorem} \label{thm-linking-form-mutation}
	Let  $D,D' \subseteq \Si$ be two cellular link diagrams related by a finite sequence of disk mutations, with checkerboard surface $F$ for $D$ and mutant surface $F'$ for $D'$. Then $\cL_{F} \cong \cL_{F'}$.
\end{theorem}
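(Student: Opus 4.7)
The plan is to reduce the statement to the invariance of $\cG_F$ under mutation (already proved in Theorem~\ref{thm-mutation1}) combined with the invariance of the intersection pairing on $\Si$. First, I would reduce to the case where $D$ and $D'$ differ by a single disc mutation with mutation disc $B \subseteq \Si$. By Lemma~\ref{lemma-link}, for curves $\al,\be$ in $H_1(F;\ZZ)$ represented by closed curves in general position,
\[
\cL_F(\al,\be) \;=\; \cG_F(\al,\be) \;+\; p_*(\al)\cdot p_*(\be),
\]
and the analogous decomposition holds for $\cL_{F'}$. Theorem~\ref{thm-mutation1} takes care of the symmetric summand, so the remaining task is to show that the antisymmetric summand, namely the intersection pairing $p_*(\al_i)\cdot p_*(\al_j)$ on $\Si$, is unchanged by mutation under the same identification of bases used there.

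To set up a compatible basis, I would invoke Lemma~\ref{lem-MutationBasis} to pick $\{\al_1,\ldots,\al_n\}$ for $H_1(F;\ZZ)$ so that all mutual intersections of basis curves occur away from crossings of $D$, each $\al_j$ with $j \geq 2$ is either disjoint from $B$ or entirely contained in $B$, and $\al_1$ (if it exists) meets $\partial B$ in exactly two points. This is the same basis used in the proof of Theorem~\ref{thm-mutation1}, so the symmetric pairing matches on the nose. After a small perturbation, I may further assume that all transverse intersections of the projected curves on $\Si$ occur in the interior of either $B$ or $\Si \sm B$, never on $\partial B$.

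The core step is then a short case analysis based on the position of each pair $(\al_i, \al_j)$ relative to the mutation disc. If both lie outside $B$, mutation leaves them unchanged and the intersection count is preserved trivially. If both lie inside $B$, mutation acts on the pair by a single self-homeomorphism of the disc $B$, which preserves signed intersection numbers. If one lies inside and the other outside, the curves are disjoint both before and after the mutation. For pairs involving $\al_1$, I would decompose $\al_1 = \al_1^{\mathrm{out}} \cup \al_1^{\mathrm{in}}$: intersections with a basis curve outside $B$ occur within $\al_1^{\mathrm{out}}$, which is untouched by mutation, while intersections with a basis curve inside $B$ occur within $\al_1^{\mathrm{in}}$, which is carried by the disc homeomorphism together with the other curve. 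Combining these with $\cG_F \cong \cG_{F'}$ via the decomposition above yields $\cL_F \cong \cL_{F'}$.

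The main potential obstacle is in the $\al_1$ case: one must verify that mutation preserves (setwise) the two points $\al_1 \cap \partial B$ well enough that the mutant curve $\al_1'$ is a genuine closed curve on $F'$ realising the abstract identification of bases used for the $\cG_F$ computation in Theorem~\ref{thm-mutation1}. Once this compatibility is checked, the case analysis above closes the argument, and Corollary~\ref{cor-OutsideKernel} serves as a useful sanity check, since on $\ker \iota_*$ the form $\cL_F$ is already symmetric and therefore coincides with $\cG_F$ where Theorem~\ref{thm-mutation1} applies directly.
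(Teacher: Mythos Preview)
Your overall strategy is exactly the paper's: reduce to a single mutation, decompose $\cL_F=\cG_F+p_*(\cdot)\cdot p_*(\cdot)$ via Lemma~\ref{lemma-link}, invoke Theorem~\ref{thm-mutation1} for the symmetric part, and use the basis of Lemma~\ref{lem-MutationBasis} to control the intersection term. The difference is in how you handle the cases where a basis curve lies inside $B$.

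Your claim that ``mutation acts on the pair by a single self-homeomorphism of the disc $B$, which preserves signed intersection numbers'' is not correct in general. Two of the three disc mutations are $\pi$-rotations about an axis lying in $\Si$; restricted to $B\subseteq\Si$ these are orientation-reversing, and so they \emph{reverse} the sign of every local intersection inside $B$. The same issue affects your treatment of $\al_1^{\mathrm{in}}$ against a curve contained in $B$. Your conclusions in these cases happen to be correct, but for a different reason: any closed curve contained in the disc $B$ is null-homologous in $\Si$, so its intersection pairing with anything is zero (and likewise a properly embedded arc in $B$ meets a bounding closed curve algebraically zero times). This is precisely the content of Corollary~\ref{cor-OutsideKernel}, which you mention only as a sanity check. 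The paper uses it centrally: once you know that any $\al_j\subseteq B$ lies in $\ker\iota_*$ and hence contributes nothing to the intersection term, the only pairs left are those with both curves outside $B$ except for the single arc $\al_1^{\mathrm{in}}$, and then no intersections occur inside $B$ at all. Promote Corollary~\ref{cor-OutsideKernel} from sanity check to the main tool and your argument becomes the paper's.
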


\begin{proof}
	It is enough to prove the statement for a single mutation with mutation disc $B$. Choose a basis for $H_1(F;\ZZ)$ represented by simple closed curves $\{\al_1,\dots,\al_n\}$ as in Lemma~\ref{lem-MutationBasis}: the curves only intersect away from the crossings of $D$, and at most one curve, $\al_1$, intersects $\partial B$ in at most two points. Then the mutation does not change the connectivity of the curves, and we denote the mutant image of $\al_i$ by $\al'_i$.

    Furthermore, by Theorem~\ref{thm-mutation1} and Lemma~\ref{lemma-link}, it suffices to show that $p_{*}(\al_i) \cdot p_{*}(\al_j)= p_{*}(\al'_i) \cdot p_{*}(\al'_j)$. By Corollary~\ref{cor-OutsideKernel}, if $\al_i \in \ker \iota_*$, then the intersection pairing of $\al_i$ with any other curve is zero, hence, we only need to verify the statement for pairs of cycles not in $\ker \iota_*$.

    Assume that $\al_i$ and $\al_j$ are two curves with $\iota_*(\al_i)\neq 0 \neq \iota_*(\al_j)$. Thus, neither $\al_i$ nor $\al_j$ is contained in $B$. Since at most one curve traverses $B$, this implies that at least one of $\al_i$ and $\al_j$ is completely outside $B$. Therefore, $\al_i$ and $\al_j$ do not intersect inside $B$, which implies that the mutation does not change the intersection pairing: $p_{*}(\al_i) \cdot p_{*}(\al_j)= p_{*}(\al'_i) \cdot p_{*}(\al'_j)$. This concludes the proof. 
\end{proof}

This in particular implies that the link invariants derived from mock Seifert matrices are all invariant under mutation:
\begin{corollary}\label{cor-MutationInvariants}
    The mock Alexander polynomial, link determinant, signatures, and mock Levine-Tristram signatures are all invariants under disc mutation.
\end{corollary}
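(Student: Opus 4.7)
The plan is to deduce each invariance statement from Theorem~\ref{thm-linking-form-mutation} together with the elementary observation that all of these invariants are defined in terms of a mock Seifert matrix $A$ representing $\cL_F$, and therefore depend only on the unimodular congruence class of $A$ (possibly together with auxiliary data which I will separately verify is preserved under disc mutation).

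First, I would fix notation: suppose $D$ and $D'$ are related by a single disc mutation, with checkerboard surfaces $F$ and $F'$. By Theorem~\ref{thm-linking-form-mutation} there is an isomorphism $\cL_F \cong \cL_{F'}$, which at the level of matrices means that if $A$ and $A'$ are mock Seifert matrices for $\cL_F$ and $\cL_{F'}$ with respect to suitable bases, then $A' = P^{\tr} A P$ for some integer matrix $P$ with $\det(P) = \pm 1$. From this I would derive the four statements in turn.

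For the link determinant, $|\det(A')| = \det(P)^2 |\det(A)| = |\det(A)|$, so $\det(L)$ is unchanged. For the mock Alexander polynomial,
\[
\det(tA' - (A')^{\tr}) \;=\; \det\!\bigl(P^{\tr}(tA - A^{\tr})P\bigr) \;=\; \det(P)^2 \det(tA - A^{\tr}),
\]
so $\Delta_{L,F}(t)$ is unchanged up to a unit, hence preserved under $\doteq$. For the signatures defined in \eqref{eqn-linksignature}, I would note that the symmetric part $\frac{1}{2}(A + A^{\tr})$, which represents $\cG_F$, is transformed by the same congruence $\cG_{F'} = P^{\tr} \cG_F P$, so $\sig(\cG_F) = \sig(\cG_{F'})$. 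The mock Levine–Tristram signatures, being signatures of Hermitian matrices of the form $(1-\omega)A + (1-\bar\omega)A^{\tr}$ for $\omega$ on the unit circle, are preserved by the same argument applied to each $\omega$.

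The main point requiring separate verification is that the correction term $\mu(F)$ entering $\si(L,F)$ is itself mutation-invariant, since otherwise the isometry of $\cG_F$ would not be enough. Here I would use that disc mutation preserves the checkerboard colouring (established in Section~\ref{section-LinksSurfacesGraphs}) and does not change the A/B type of any crossing, nor its type I/II classification relative to the chosen checkerboard surface; therefore the incidence numbers $\eta_x$ and the sets of crossings entering the sums defining $\mu(F_b)$ and $\mu(F_w)$ are identical for $D$ and $D'$, giving $\mu(F) = \mu(F')$. Together with $\sig(\cG_F) = \sig(\cG_{F'})$ this yields $\si(L,F) = \si(L',F')$, and iterating over a finite sequence of disc mutations completes the proof. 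I do not anticipate a genuine obstacle here; the only subtlety is bookkeeping around the correction term in the signature formula.
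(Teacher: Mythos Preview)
Your proposal is correct and follows the same approach as the paper, which states the corollary as an immediate consequence of Theorem~\ref{thm-linking-form-mutation} without giving any further proof. Your explicit verification that the correction term $\mu(F)$ is preserved under disc mutation (because the A/B and I/II crossing types are unchanged) is a useful detail that the paper leaves implicit.
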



\subsection{Non-completeness} \label{subsec:noncomplete}
In this section, we show that the invariant $(\cF_b(L), \cF_w(L))$ is not a complete invariant of the mutation class of the alternating  link $L \subseteq \Si \times I$. We present examples of alternating knots in thickened surfaces whose Gordon-Litherland flow lattices are isomorphic but whose linking forms are not. 

The  calculation is carried out in terms of representative matrices. Given an alternating knot $K \subseteq \Si \times I$ with checkerboard colouring, let $F_b$ and $F_w$ denote the black and white checkerboard surfaces. We  use $A_b(K), A_w(K)$ to denote the mock Seifert matrices for the black and white surfaces,
and $S_b(K), S_w(K)$ for the Gordon-Litherland pairing matrices, which are the symmetrisations of $A_b(K), A_w(K)$.
We seek knots $K_{1}$ and $K_{2}$ whose pairing matrices are pairwise unimodular congruent but whose mock Seifert matrices are not. Two such knots would have equivalent lattice invariants, i.e., $(\cF_b(K_{1}), \cF_w(K_{1})) \cong (\cF_b(K_{2}), \cF_w(K_{2})),$ but inequivalent linking forms $(\cL_b(K_{1}), \cL_w(K_{1}))\not\cong (\cL_b(K_{2}), \cL_w(K_{2}))$. By Theorem \ref{thm-linking-form-mutation}, this implies that the knots are not equivalent under mutation. 

\begin{example}\label{ex-NonCompleteness}
Of the 92800 virtual knots up to 6 crossings, only 168 of them are alternating. A list of these virtual knots appears in Appendix B of \cite{Karimi}. For these virtual knots, we computed the above invariants and found multiple counterexample pairs. The full results of the computation are available at \cite{Lin-algo}. One of the simplest counterexamples is the pair of knots $K_1 =K_{6.90101}$ and $K_2 =K_{6.90124}$ shown in Figure~\ref{fig:counterexamples}. These knots have identical Gordon-Litherland pairing matrices, as shown in Equation~\eqref{eqn-GLMatrices}, but different mock Seifert matrices, shown in Equation \eqref{eqn-MSMatrices}.

\begin{figure}
	\centering
	\hspace*{\fill}
	\begin{subfigure}[b]{0.35 \textwidth}
		\centering
		\def\svgwidth{\columnwidth}
		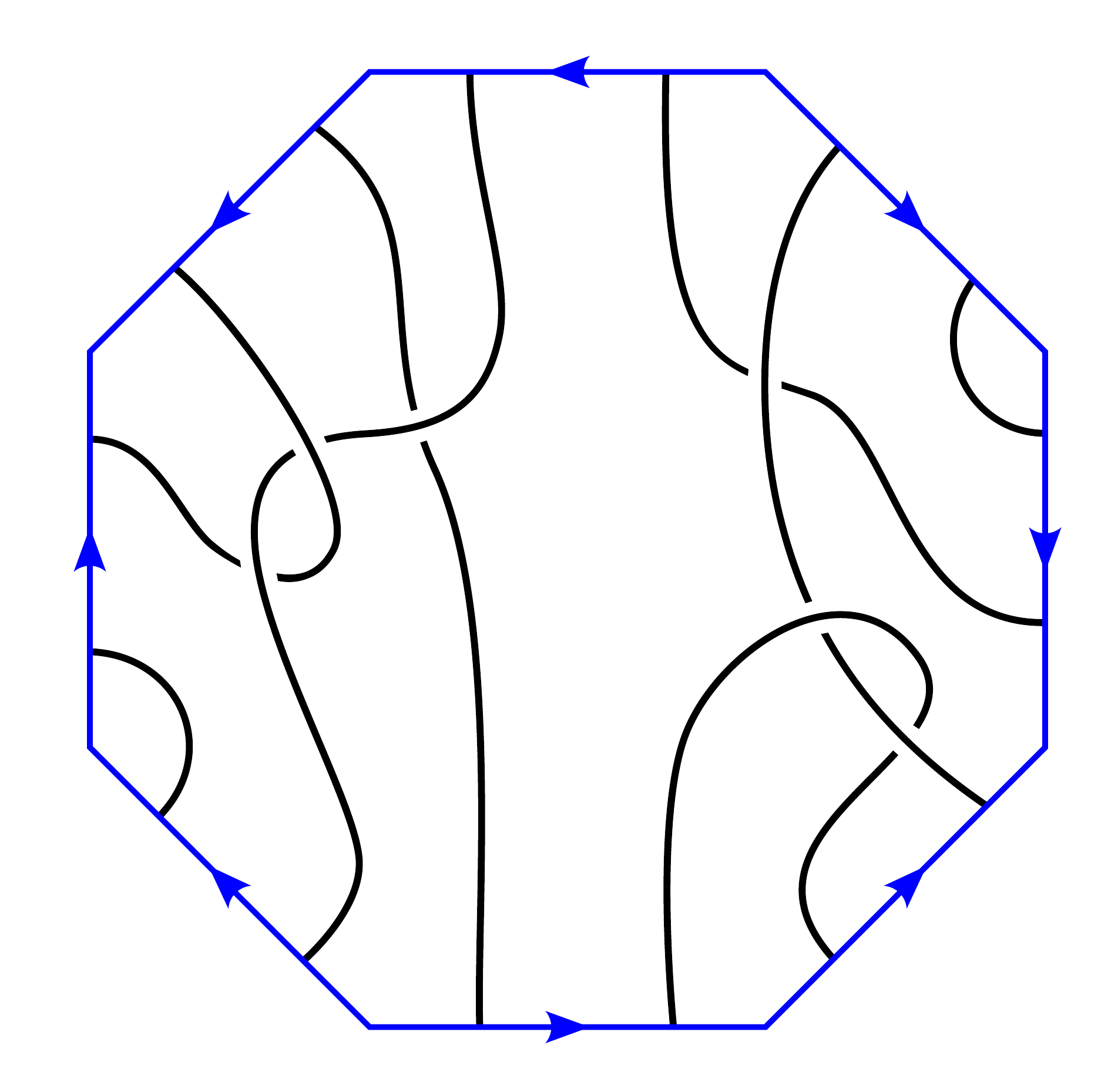
		\caption{Virtual knot $K_1=K_{6.90101}$}
		\label{fig:6-90101_vknot}
	\end{subfigure}
	\hspace*{\fill}	\hspace*{\fill}	\hspace*{\fill}
	\begin{subfigure}[b]{0.35 \textwidth}
		\centering
		\def\svgwidth{\columnwidth}
		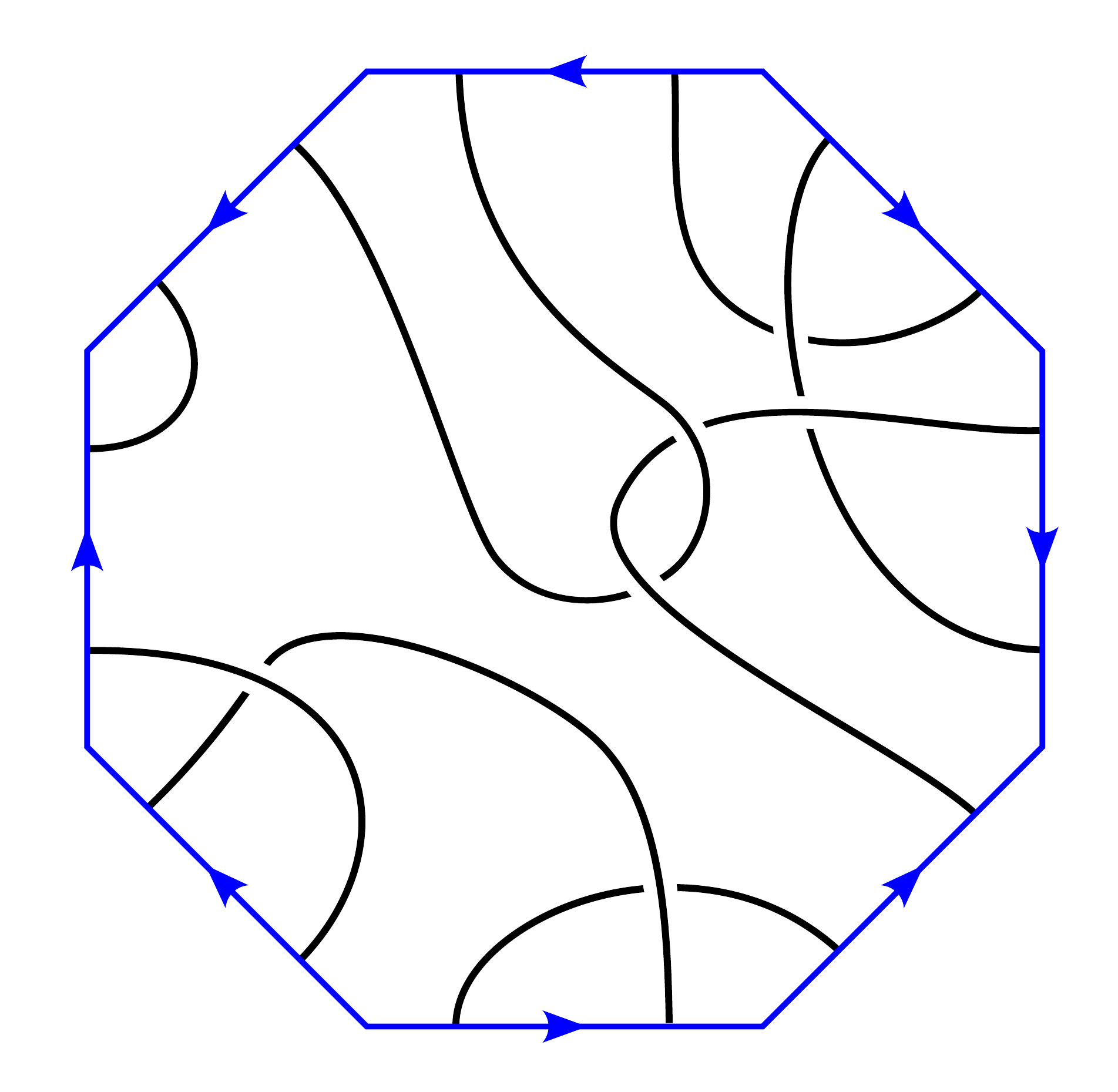
		\caption{Virtual knot $K_2=K_{6.90124}$}
		\label{fig:6-90124_vknot}
	\end{subfigure}
	\hspace*{\fill} 
	\caption{A pair of knots in a thickened surface of genus 2 providing a counterexample to the completeness of $(\cF_b(L), \cF_w(L))$ as a mutation invariant of the knot.}
	\label{fig:counterexamples}
\end{figure}

\begin{equation} \label{eqn-GLMatrices}
\begin{split}
	S_b(K_1) = S_b(K_2) &= \begin{bmatrix}
		2 & 0 & 0 & 0 \\
		0 & 1 & 0 & 0 \\
		0 & 0 & 2 & 0 \\
		0 & 0 & 0 & 1
	\end{bmatrix},\\
S_w(K_1) = S_w(K_2) &=
\begin{bmatrix}
	1  & 0 & 0 & 0 & 1  & -1 \\
	0  & 1 & 0 & 0 & 0  & 0  \\
	0  & 0 & 1 & 0 & 0  & 1  \\
	0  & 0 & 0 & 1 & 0  & 0  \\
	1  & 0 & 0 & 0 & 2  & -2 \\
	-1 & 0 & 1 & 0 & -2 & 4
\end{bmatrix},\\
\end{split}
\end{equation}

\begin{equation} \label{eqn-MSMatrices}
\begin{split}A_b(K_1)
	&= \begin{bmatrix}
	2 & -1 & 0 & 0\\
	1 & 1 & 0 & 0\\
	0 & 0 & 2 & -1\\
	0 & 0 & 1 & 1\\
\end{bmatrix}, \qquad
A_w(K_1) = \begin{bmatrix}
1  & 1 & 0  & 0 & 1   &-1\\
-1&  1  &0 &  0  &0   &0 \\
0&   0 & 1   &1&  0   &1 \\
0   &0  &-1 & 1 & 0   &0 \\
1  & 0&  0 &  0  &2  & -2\\
-1&  0 & 1&   0 & -2&  4 
\end{bmatrix},\\
A_b(K_2) &= \begin{bmatrix}
	2 & -1 & 0 & -1\\
	1 & 1 & 0 & 1\\
	0 & 0 & 2 & -1\\
	1 & -1 & 1 & 1\\
\end{bmatrix}, \qquad
A_w(K_2)
= \begin{bmatrix}
	1  & 1  & -1&  0  &1 &  -1\\	                     
	-1 & 1 &  1  & 0  &0  & 0 \\
	1  & -1  &1   &1  &0   &1 \\
	0  & 0  & -1&  1  &0   &0 \\
	1  & 0 &  0  & 0 & 2   &-2\\
	-1 & 0&   1   &0&  -2  &4 \\
\end{bmatrix}.
\end{split}
\end{equation}

In fact, the mock Seifert matrix pairs are not unimodular congruent;
as demonstrated by comparing their determinants:
\begin{align*}
	\det A_b(K_1) =\det A_w(K_1) & = 9,  \\
	\det A_b(K_2) =\det A_w(K_2) & = 15.
\end{align*}
 
The mock Alexander polynomials, also computed from the mock Seifert matrices, are also different:
\begin{align*}
	\De_{K_1,F_b} (t) & \doteq 9 t^{4} - 12 t^{3} + 22 t^{2} - 12 t + 9,  \\
    \De_{K_1,F_w} (t) & \doteq 9 t^{4} +12 t^{3} + 22 t^{2} - 12 t + 9, \\
	\De_{K_2,F_b}(t) & \doteq 15 t^{4} - 12 t^{3} + 10 t^{2} - 12 t + 15,\\
    \De_{K_2,F_w}(t) & \doteq 15 t^{4} + 12 t^{3} + 10 t^{2} + 12 t  + 15. 
\end{align*}
By Corollary~\ref{cor-MutationInvariants}, mutant alternating virtual knots have the same mock Alexander polynomials. Therefore, the knots $K_1$ and $K_2$ are not mutants of one another. 
\end{example}

\begin{corollary}\label{cor-NonCompleteness}
	For non-split alternating link diagrams on surfaces, the pair of lattices $(\cF_b(D), \cF_w(D))$
    of integer flows of the Tait graphs  is not a complete invariant of alternating links in thickened surfaces up to mutation.
\end{corollary}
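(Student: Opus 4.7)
The plan is to derive the corollary directly from Example~\ref{ex-NonCompleteness} together with Theorem~\ref{thm-equiv} and Corollary~\ref{cor-MutationInvariants}. The idea is that the two knots $K_1 = K_{6.90101}$ and $K_2 = K_{6.90124}$ exhibited in the example already provide all the needed data; what remains is to assemble the implications in the correct order.

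First I would invoke Theorem~\ref{thm-equiv} to translate the data about Gordon-Litherland pairing matrices into a statement about flow lattices. Specifically, since the matrices $S_b(K_i)$ and $S_w(K_i)$ from Equation~\eqref{eqn-GLMatrices} represent the Gordon-Litherland pairings on the positive and negative definite checkerboard surfaces, Theorem~\ref{thm-equiv} gives isometries $\cG_{F_b(D_i)} \cong \cF_b(D_i)$ and $\cG_{F_w(D_i)} \cong -\cF_w(D_i)$ for each reduced alternating diagram $D_i$ of $K_i$. Because the matrices $S_b(K_1)$ and $S_b(K_2)$ are identical (and likewise for $S_w$), the flow lattices agree in pairs: $(\cF_b(K_1), \cF_w(K_1)) \cong (\cF_b(K_2), \cF_w(K_2))$.

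Next I would show $K_1$ and $K_2$ are not related by disc mutation. For this I would use the mock Alexander polynomials recorded in the example, together with Corollary~\ref{cor-MutationInvariants}, which states that the mock Alexander polynomial is invariant under disc mutation. Since
\[
\De_{K_1,F_b}(t) \doteq 9t^4 - 12t^3 + 22t^2 - 12t + 9 \quad \text{and} \quad \De_{K_2,F_b}(t) \doteq 15t^4 - 12t^3 + 10t^2 - 12t + 15
\]
differ (their leading coefficients, which are the link determinants, are $9$ and $15$ respectively, and these are invariant under the $\doteq$ equivalence), no sequence of disc mutations can relate $K_1$ to $K_2$.

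Combining the two steps, $K_1$ and $K_2$ are non-mutant alternating knots in a thickened surface whose pairs of flow lattices of Tait graphs are nevertheless isometric, proving that $(\cF_b, \cF_w)$ cannot distinguish mutation classes. The only potential subtlety is ensuring that the matrices $S_b, S_w$ listed in Example~\ref{ex-NonCompleteness} really represent the flow lattices of the Tait graphs of reduced alternating diagrams on a minimal genus surface, but this is immediate since the knots are alternating and the matrices are constructed from the checkerboard surfaces of the displayed diagrams in Figure~\ref{fig:counterexamples}. No technical obstacle arises beyond this bookkeeping; the real work was done in establishing the example itself and in proving the mutation invariance of the mock Alexander polynomial.
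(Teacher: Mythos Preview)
Your proposal is correct and follows essentially the same approach as the paper: the corollary is derived directly from Example~\ref{ex-NonCompleteness} by invoking Theorem~\ref{thm-equiv} to identify the identical Gordon-Litherland pairing matrices with the flow lattices, and Corollary~\ref{cor-MutationInvariants} to conclude from the distinct mock Alexander polynomials (or determinants) that $K_1$ and $K_2$ are not mutants. The paper does not give a separate proof beyond the example, so your assembly of the implications is exactly what is intended.
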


We close this paper by recalling the statement of Kindred's virtual flype theorem \cite{Kindred-2022}, and using it to show that the the Gordon-Litherland linking form is an invariant for weakly prime alternating links in thickened surfaces.

Recall from \cite{Howie-Purcell} that a link diagram $D\subseteq \Si$ is \textit{weakly prime} if for any connected sum decomposition $(\Si, D)=(\Si, D_1)\#(S^2, D_2)$, either $D_2$ is the trivial diagram of the unknot, or $D_1$ is the trivial diagram of the unknot and $\Si=S^2$.

A \textit{flype move} is an operation on link diagrams does not alter the link type and preserves the alternating property, see Figure~\ref{fig-flype}. The Tait flype conjecture asserts that any two reduced alternating diagrams of the same link are equivalent by a finite sequence of flype moves. This was proved for classical links by Menasco and Thistlethwaite \cite{Menasco-Thistlethwaite}, and recently given a geometric proof by Kindred \cite{Kindred-2020}. The Tait conjectures have been proved for virtual links in the papers \cite{Boden-Karimi-2019, Boden-Karimi-Sikora, Kindred-2022}, and the next result is the statement of Kindred's virtual flyping theorem \cite{Kindred-2022}.

\begin{theorem}[Virtual Flyping Theorem]\label{thm-VFlype}
If $D$ and $D'$ are two weakly prime, cellularly embedded, alternating diagrams of a virtual link $L$, then $D$ and $D'$ are related by a sequence of flype moves.
\end{theorem}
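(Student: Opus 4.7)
The plan is to follow the template of Greene's proof of the classical flype theorem via 2-isomorphisms of Tait graphs, upgraded to surface graphs. At the highest level, one would use the Gordon-Litherland lattice invariants developed in this paper to identify the two Tait graphs combinatorially, then translate the resulting sequence of combinatorial moves relating their surface embeddings into flypes on the diagrams. The weakly prime hypothesis is used to rule out moves corresponding to non-trivial connected sum decompositions, which do not arise from flypes.

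First I would extract the combinatorial data. Checkerboard colour $D$ and $D'$ so that all crossings have type A. By Theorem~\ref{thm-isom} and Theorem~\ref{thm-equiv}, there are lattice isometries
\[
\cF_b(D)\cong \cF_b(D') \quad\text{and}\quad \cF_w(D)\cong \cF_w(D'),
\]
and by the Discrete Torelli Theorem~\ref{thm-DiscTorelli} the Tait graphs $G_b(D)$ and $G_b(D')$ (and likewise the white Tait graphs) are 2-isomorphic as abstract multigraphs. The next step, and the heart of the proof, is to promote this abstract 2-isomorphism to a sequence of local moves that respect the cellular embeddings $G_b(D)\hookrightarrow \Si$ and $G_b(D')\hookrightarrow \Si$. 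In the planar case this is Whitney's theorem, which expresses any 2-isomorphism as a composition of flips, planar switches, and swaps. For surface graphs I would need an analogous classification: a Whitney-type theorem stating that any 2-isomorphism between cellularly embedded graphs of the same (minimal) genus is realised by a finite sequence of \emph{surface flips} along separating cycles of length two, together with the usual planar moves local to embedded discs. The weakly prime hypothesis is what rules out the potentially problematic move of cutting along an essential simple closed curve crossing two edges.

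Once this combinatorial step is in place, the translation to flypes is local and mirrors Greene's analysis. I would check case-by-case that each admissible surface-graph move on $G_b$ (and simultaneously its dual move on $G_w$) is realised by a flype on the corresponding alternating diagram, noting that flypes preserve the checkerboard colouring and crossing types (A or B), so that alternation is maintained throughout. Iterating, one converts $D$ into $D'$ by a sequence of flype moves. To conclude, the white lattice isometry $\cF_w(D)\cong \cF_w(D')$ provides a consistency check, via Chromatic duality (Theorem~\ref{thm-ChromDuality}), that the sequence of moves chosen to match $G_b(D)$ to $G_b(D')$ is compatible with the surface-dual structure and thus corresponds to isotopies of the actual link $L$ rather than to mutants.

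The main obstacle is the surface analogue of Whitney's theorem. Classically, the move set is finite and well understood because the sphere has trivial topology; on a higher genus surface, a 2-isomorphism can mix local moves with cut-and-paste operations along essential curves, and it is not automatic that such operations can be expressed as compositions of moves each of which descends to a flype. This is exactly where the weakly prime assumption needs to be used to exclude decompositions of $(\Si,D)$ as nontrivial connected sums. The non-completeness Example~\ref{ex-NonCompleteness} further indicates that the flow lattices alone are insufficient to detect the full diagram structure, so in order to conclude flype equivalence rather than merely mutation equivalence, the argument must genuinely use the hypothesis that $D$ and $D'$ represent the \emph{same} link $L$ - for instance, by appealing to the uniqueness-up-to-equivalence of the pair of definite spanning surfaces from Theorem~\ref{thm-VirtualAltChar} - and not merely the equivalence of their abstract Tait graphs.
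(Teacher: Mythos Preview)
The paper does \emph{not} prove Theorem~\ref{thm-VFlype}; it is quoted as Kindred's virtual flyping theorem \cite{Kindred-2022} and used as a black box in the proof of Theorem~\ref{thm-weakly-prime}. So there is no ``paper's own proof'' to compare against, and Kindred's argument is a geometric extension of his proof of the classical flype theorem, not a lattice-theoretic one.

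As for the viability of your proposed route, there is a genuine gap. The surface analogue of Whitney's theorem that you need does not exist in the literature, and the paper itself provides strong evidence that the lattice approach cannot furnish it: Example~\ref{ex-NonCompleteness} exhibits two alternating knots in a genus~2 surface with isometric flow lattices $(\cF_b,\cF_w)$ that are not even mutants, let alone flype-equivalent. Thus the step ``$\cF_b(D)\cong\cF_b(D')$ and $\cF_w(D)\cong\cF_w(D')$ implies the surface embeddings of the Tait graphs differ by a controlled list of local moves'' is false as stated. You acknowledge this and propose to rescue it by invoking that $D$ and $D'$ represent the same link, appealing to Theorem~\ref{thm-VirtualAltChar}; but that theorem only gives equivalence of the checkerboard \emph{surfaces}, which is exactly the input you already used to get the lattice isometries, and does not supply the missing combinatorial control over the embeddings. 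Without an independent surface-Whitney result, the argument is circular at the crucial step.
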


We use this theorem to show that the Gordon-Litherland linking forms of the checkerboard surfaces are invariants of weakly prime alternating links in thickened surfaces. 

\begin{figure}
	\centering
 	\def\svgscale{0.38}
\begingroup%
  \makeatletter%
  \providecommand\color[2][]{%
    \errmessage{(Inkscape) Color is used for the text in Inkscape, but the package 'color.sty' is not loaded}%
    \renewcommand\color[2][]{}%
  }%
  \providecommand\transparent[1]{%
    \errmessage{(Inkscape) Transparency is used (non-zero) for the text in Inkscape, but the package 'transparent.sty' is not loaded}%
    \renewcommand\transparent[1]{}%
  }%
  \providecommand\rotatebox[2]{#2}%
  \newcommand*\fsize{\dimexpr\f@size pt\relax}%
  \newcommand*\lineheight[1]{\fontsize{\fsize}{#1\fsize}\selectfont}%
  \ifx\svgwidth\undefined%
    \setlength{\unitlength}{789.89519254bp}%
    \ifx\svgscale\undefined%
      \relax%
    \else%
      \setlength{\unitlength}{\unitlength * \real{\svgscale}}%
    \fi%
  \else%
    \setlength{\unitlength}{\svgwidth}%
  \fi%
  \global\let\svgwidth\undefined%
  \global\let\svgscale\undefined%
  \makeatother%
  \begin{picture}(1,0.12910689)%
    \lineheight{1}%
    \setlength\tabcolsep{0pt}%
    \put(0,0){\includegraphics[width=\unitlength,page=1]{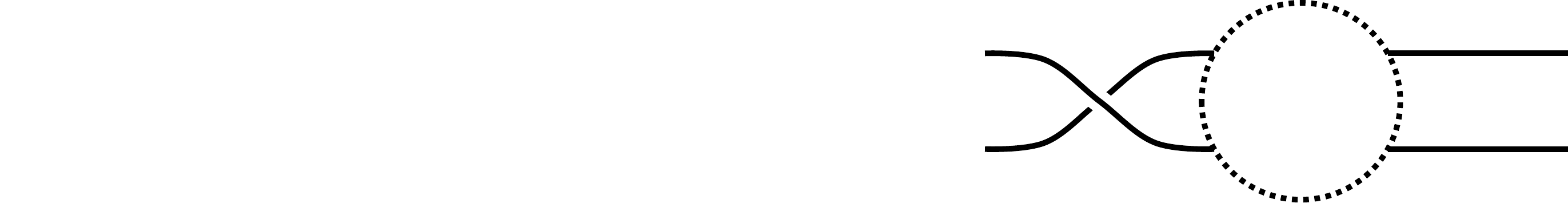}}%
    \put(0.17008238,0.04588961){\color[rgb]{0,0,0}\makebox(0,0)[t]{\lineheight{1.25}\smash{\begin{tabular}[t]{c}{\LARGE$T$}\end{tabular}}}}%
    \put(0.8297888,0.08620826){\color[rgb]{0,0,0}\rotatebox{-180}{\makebox(0,0)[t]{\lineheight{1.25}\smash{\begin{tabular}[t]{c}{\LARGE$T$}\end{tabular}}}}}%
    \put(0,0){\includegraphics[width=\unitlength,page=2]{flype.pdf}}%
    \put(0.49967665,0.08306175){\color[rgb]{0,0,0}\makebox(0,0)[t]{\lineheight{1.25}\smash{\begin{tabular}[t]{c}flype\end{tabular}}}}%
  \end{picture}%
\endgroup%

    \caption{The flype move.}\label{fig-flype}
\end{figure}

\begin{theorem} \label{thm-weakly-prime}
    Let $D$ and $D'$ be two weakly prime, cellularly embedded, alternating diagrams for the same link $L \subseteq \Si \times I$.
Then there are isomorphisms of the Gordon-Litherland linking forms: $\cL_{F_b(D)} \cong \cL_{F_b(D')}$ and $\cL_{F_w(D)} \cong \cL_{F_w(D')}$.
\end{theorem}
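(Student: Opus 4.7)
My plan is to reduce the theorem to the combination of Kindred's Virtual Flyping Theorem (Theorem~\ref{thm-VFlype}) and the disc mutation invariance of the Gordon-Litherland linking form (Theorem~\ref{thm-linking-form-mutation}). Since $D$ and $D'$ are both weakly prime, cellularly embedded alternating diagrams of the same link $L \subseteq \Si \times I$, Theorem~\ref{thm-VFlype} supplies a finite sequence of flype moves relating $D$ to $D'$. Thus it suffices to prove $\cL_{F_b(D)} \cong \cL_{F_b(D')}$ and $\cL_{F_w(D)} \cong \cL_{F_w(D')}$ when $D'$ is obtained from $D$ by a single flype, and then compose across the sequence.

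The key step is the observation that a flype move is a special case of a disc mutation in the sense of Section~\ref{section-LinksSurfacesGraphs}. Given the tangle $T$ and adjacent crossing pictured in Figure~\ref{fig-flype}, I would take $B \subseteq \Si$ to be a disc containing $T$ together with the neighbouring crossing, chosen so that $\partial B$ meets the diagram transversely in exactly four points, two on each side of the tangle-plus-crossing region. The flype is then precisely the $\pi$-rotation of $B$ preserving these four boundary points setwise, which is the definition of disc mutation. With this identification in hand, Theorem~\ref{thm-linking-form-mutation} immediately yields the desired isomorphisms $\cL_{F_b(D)} \cong \cL_{F_b(D')}$ and $\cL_{F_w(D)} \cong \cL_{F_w(D')}$ for a single flype, and iterating gives the full statement.

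The principal point requiring care is the identification of the flype as a disc mutation, and in particular ensuring that the checkerboard surfaces $F_b(D')$ and $F_w(D')$ of the flyped diagram agree, up to the equivalence of Definition~\ref{defn-ssequivalent}, with the images of $F_b(D)$ and $F_w(D)$ under the $\pi$-rotation of $B$. This reduces to a local inspection of Figure~\ref{fig-flype}, verifying that the checkerboard colouring is preserved by the rotation and that the crossing transported across the tangle fits back into the global colouring in a consistent way; beyond this bookkeeping, I do not anticipate any further obstacle, and the proof should occupy only a few lines once the flype-as-mutation identification is stated.
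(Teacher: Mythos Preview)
Your plan—reduce to a single flype via Theorem~\ref{thm-VFlype} and then invoke Theorem~\ref{thm-linking-form-mutation}—is natural and does lead to a proof, but the step ``a flype is a single disc mutation of the disc $B$ containing $T$ and the adjacent crossing'' is not correct as stated. Write $T^{h}$, $T^{v}$, $T^{p}$ for the images of $T$ under the three mutation involutions (rotation by $\pi$ about the horizontal, vertical, and perpendicular axes respectively). The flype rotates $T$ through $\pi$ about the horizontal axis in $\Si\times I$, cancelling the crossing on one side and recreating it on the other; as a diagram move this sends $[c][T]$ to $[T^{h}][c]$. The three disc mutations of $B$, by contrast, send $[c][T]$ to $[c^{h}][T^{h}]$ (horizontal axis—the crossing stays on the same side), to $[T^{v}][c]$ (vertical axis), or to $[T^{p}][c]$ (perpendicular axis). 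For a generic tangle the three elements $T^{h}$, $T^{v}$, $T^{p}$ are pairwise distinct, so none of these is the flyped diagram.

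The repair is immediate: the flype factors as two disc mutations. For instance, first mutate the smaller disc containing $T$ alone about the vertical axis, obtaining $[c][T^{v}]$; then mutate the full disc $B$ about the perpendicular axis, obtaining $[(T^{v})^{p}][c]=[T^{h}][c]$. Each intermediate diagram is still cellular and checkerboard coloured, so Theorem~\ref{thm-linking-form-mutation} applies at each step and the isomorphisms compose. With this fix your argument is complete and shorter than the paper's own proof, which does not reduce to Theorem~\ref{thm-linking-form-mutation} but instead re-runs its ingredients directly for the flype: it chooses the basis of Lemma~\ref{lem-MutationBasis} adapted to $B$, decomposes $\cL_{F}=\cG_{F}+p_{*}(\,\cdot\,)\cdot p_{*}(\,\cdot\,)$ via Lemma~\ref{lemma-link}, and then verifies separately that the flype preserves the flow pairing on the Tait graph (hence $\cG_{F}$, via Theorem~\ref{thm-equiv}) and the intersection pairing (because at most one basis curve enters $B$).
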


\begin{proof}
    By Theorem~\ref{thm-VFlype}, $D$ and $D'$ are related by a sequence of flype moves. It is enough to verify the statement for link diagrams $D$ and $D'$ that differ by a single flype move.

    Let $B\subseteq \Si$ be a disc which intersects $D$ in four points, and includes the tangle $T$ of Figure~\ref{fig-flype}, as well as the additional crossing which participates in the flype. Choose a basis of simple closed curves $\{\al_1,\ldots,\al_n\}$ for $H_1(F_b(D);\ZZ)$, satisfying the conditions of Lemma~\ref{lem-MutationBasis}: that is at most $\al_1$ traverses $B$, and the curves intersect transversely away from the crossings of $D$.

    By Lemma \ref{lemma-link}, we have $\cL_{F_b}(\al_i,\al_j)=\cG_{F_b}(\al_i,\al_j)+p_*(\al_i)\cdot p_*(\al_j)$. Furthermore, each $\al_i$ deformation retracts to a simple closed walk $a_i$ on the Tait graph $G_b$, and by Theorem~\ref{thm-equiv}, we have $\cG_{F_b}(\al_i,\al_j)=\langle a_i, a_j \rangle_{\cF_b}$ for all $i,j=1,\dots, n$. It is a short exercise to check that the flype move on the black Tait graph does not affect the flow pairings of the cycles $a_1,\ldots,a_n$. Thus, we only need to show that the intersection pairings remain the same, and this is true because there is only one generator outside the kernel of $\iota_*$ which may enter $B$.
\end{proof}

\begin{question}
    Are the isomorphism types of the Gordon-Litherland linking forms of the checkerboard surfaces invariants of alternating links in thickened surfaces?
\end{question}

Theorem \ref{thm-weakly-prime} answers this question for weakly prime, alternating link diagrams, and the question is whether it remains true for non-weakly prime, alternating link diagrams. Since they are invariant under mutation, it is natural to ask whether they give complete invariants.

\begin{question}
For non-split  alternating links $L$ in thickened surfaces, are the Gordon-Litherland linking forms complete invariants of the mutation class of $L$?
\end{question}

The last question concerns the lattice-theoretic formulas for the $d$-invariant of alternating virtual links  found in ~\S \ref{subsec-dInv}. 
For classical links, the $d$-invariant was originally defined geometrically as a correction term arising from the Heegaard-Floer homology of certain double branched covers in \cite{Ozsvath-Szabo-2003a}. In \cite{Ozsvath-Szabo-2003b, Ozsvath-Szabo-2005}, Ozsv\'{a}th and Szab\'{o} develop numerous applications of the $d$-invariant, including computational results for classical alternating links. Building on these ideas, Greene showed in \cite{Greene-2011} that the $d$-invariant of an alternating classical link depends only on the flow lattice of the associated Tait graph.
Our formulas in \S \ref{subsec-dInv} are the natural generalisation of Greene's approach, and they can be viewed as a step toward extending the $d$-invariant to all virtual knots and links.

\begin{question} \label{Q5-18}
Does the lattice-theoretic formulation of the $d$-invariant from~\S \ref{subsec-dInv} have a natural geometric interpretation? Does it extend to give a well-defined invariant on the class of all (checkerboard colourable) virtual knots and links?
\end{question}

In \cite{manolescu-owens}, the $d$-invariant is shown to descend to an invariant of knot concordance which, on alternating knots, coincides with the knot signature,  the Rasmussen $s$-invariant, and the $\tau$-invariant. (There is a vast literature on knot invariants arising from Heegaard Floer homology; for instance see  \cite{Greene-survey, Hom-survey, Hom-ICM} and the papers cited therein.)

If the $d$-invariant can be defined for all virtual knots, then as is the case for classical knots, it is likely that the extension is not unique. In order to answer Question \ref{Q5-18}, it would be most helpful to find a ``good'' extension of Heegaard-Floer homology, or one of its variants, to virtual links.  This interesting question is studied in \cite{JKO24}, where the authors propose extensions of Heegaard-Floer homology to links in thickened surfaces and study their behaviour under stable equivalence.

\vspace{3mm} \noindent
{\bf Acknowledgements.}
We would like to thank Homayun Karimi and Lorenzo Traldi for their valuable feedback, Marco Marengon for several insightful conversations, and the anonymous referee for their many helpful suggestions.

\bibliographystyle{alpha}

\begin{thebibliography}{BdlHN97}

\bibitem[Ada94]{Adams}
Colin~C. Adams.
\newblock {\em The knot book}.
\newblock W. H. Freeman and Company, New York, 1994.
\newblock An elementary introduction to the mathematical theory of knots.

\bibitem[BCK22]{Boden-Chrisman-Karimi}
Hans~U. Boden, Micah Chrisman, and Homayun Karimi.
\newblock The {G}ordon-{L}itherland pairing for links in thickened surfaces.
\newblock {\em Internat. J. Math.}, 33(10n11):Paper No. 2250078, 47, 2022.

\bibitem[BHN97]{Backer-et-al}
Roland Bacher, Pierre de~la Harpe, and Tatiana Nagnibeda.
\newblock The lattice of integral flows and the lattice of integral cuts on a
  finite graph.
\newblock {\em Bulletin de la Soci{\'e}t{\'e} Math{\'e}matique de France},
  125(2):167 -- 198, 1997.

\bibitem[BK21]{Boden-Karimi-2021}
Hans~U. Boden and Homayun Karimi.
\newblock Concordance invariants of null-homologous knots in thickened
  surfaces, 2021.
\newblock \href{https://arxiv.org/pdf/2111.07409.pdf}{ArXiv/2111.07409}, To
  appear in {\em Comm. Anal. Geom.}

\bibitem[BK22]{Boden-Karimi-2019}
Hans~U. Boden and Homayun Karimi.
\newblock The {J}ones-{K}rushkal polynomial and minimal diagrams of surface
  links.
\newblock {\em Ann. Inst. Fourier (Grenoble)}, 72(4):1437--1475, 2022.

\bibitem[BK23a]{Boden-Karimi-2023b}
Hans~U. Boden and Homayun Karimi.
\newblock A characterization of alternating links in thickened surfaces.
\newblock {\em Proc. Roy. Soc. Edinburgh Sect. A}, 153(1):177--195, 2023.

\bibitem[BK23b]{Boden-Karimi-2023}
Hans~U. Boden and Homayun Karimi.
\newblock Mock {S}eifert matrices and unoriented algebraic concordance, 2023.
\newblock \href{https://arxiv.org/pdf/2301.05946.pdf}{ArXiv/2301.05946}.

\bibitem[BKS23]{Boden-Karimi-Sikora}
Hans~U. Boden, Homayun Karimi, and Adam~S. Sikora.
\newblock Adequate links in thickened surfaces and the generalized {T}ait
  conjectures.
\newblock {\em Algebr. Geom. Topol.}, 23(5):2271--2308, 2023.


\bibitem[Bro60]{Brody-1960}
E.~J. Brody.
\newblock The topological classification of the lens spaces.
\newblock {\em Ann. of Math. (2)}, 71:163--184, 1960.

\bibitem[CT07]{Cimasoni-Turaev}
David Cimasoni and Vladimir Turaev.
\newblock A generalization of several classical invariants of links.
\newblock {\em Osaka J. Math.}, 44(3):531--561, 2007.

\bibitem[CV10]{Caporaso-Viviani}
Lucia Caporaso and Filippo Viviani.
\newblock {Torelli theorem for graphs and tropical curves}.
\newblock {\em Duke Math. J.}, 153(1):129--171, 2010.

\bibitem[GL78]{GL-1978}
Cameron~McA. Gordon and Richard~A. Litherland.
\newblock On the signature of a link.
\newblock {\em Invent. Math.}, 47(1):53--69, 1978.

\bibitem[Gre04]{Green}
Jeremy Green.
\newblock A table of virtual knots, 2004.
\newblock
  \href{http://www.math.toronto.edu/drorbn/Students/GreenJ}{www.math.toronto.edu/drorbn/Students/GreenJ}.

\bibitem[Gre11]{Greene-2011}
Joshua~Evan Greene.
\newblock Lattices, graphs, and {C}onway mutation.
\newblock {\em Invent. Math.}, 192(3):717--750, 2011.

\bibitem[Gre17]{Greene-2017}
Joshua~Evan Greene.
\newblock Alternating links and definite surfaces.
\newblock {\em Duke Math. J.}, 166(11):2133--2151, 2017.
\newblock With an appendix by Andr\'as Juh\'asz and Marc Lackenby.

\bibitem[Gre21]{Greene-survey}
Joshua~Evan Greene.
\newblock Heegaard {F}loer homology.
\newblock {\em Notices Amer. Math. Soc.}, 68(1):19--33, 2021.



\bibitem[Hom17]{Hom-survey}
Jennifer Hom.
\newblock A survey on {H}eegaard {F}loer homology and concordance.
\newblock {\em J. Knot Theory Ramifications}, 26(2): Paper no 1740015, 24, 2017.


\bibitem[Hom22]{Hom-ICM}
Jennifer Hom.
\newblock Homology cobordism, knot concordance, and {H}eegaard {F}loer
  homology.
\newblock to appear in Proceedings of the ICM 2022, 
  \href{https://arxiv.org/abs/2108.10400}{ArXiv 2108.10400}.

\bibitem[How17]{Howie-2017}
Joshua Howie.
\newblock A characterisation of alternating knot exteriors.
\newblock {\em Geom. Topol.}, 21(4):2353 -- 2371, 2017.

\bibitem[HP17]{Howie-Purcell}
Joshua Howie and Jessica Purcell.
\newblock Geometry of alternating links on surfaces.
\newblock {\em Trans. Amer. Math. Soc.}, 373(4):2349--2397, 
  2017.

\bibitem[JKO24]{JKO24}
Andr\'as Juh\'asz, Louis~H.~Kauffman, and Eiji Ogasa.
\newblock New invariants for virtual knots via spanning surfaces.
\newblock{\em J. Knot Theory Ramifications}, 33(4) Paper No. 245009: 35, 2024.


\bibitem[Kam02]{Kamada-2002}
Naoko Kamada.
\newblock On the {J}ones polynomials of checkerboard colorable virtual links.
\newblock {\em Osaka J. Math.}, 39(2):325--333, 2002.

\bibitem[Kar18]{Karimi}
Homayun Karimi.
\newblock {\em Alternating Virtual Knots}.
\newblock PhD thesis, McMaster University School of Graduate Studies,  2018. \href{http://hdl.handle.net/11375/23724}{MacSphere access}



\bibitem[Kin20]{Kindred-2020}
Thomas Kindred.
\newblock A geometric proof of the flyping theorem, 2020.
\newblock \href{https://arxiv.org/pdf/2008.06490 .pdf}{ArXiv/2008.06490 }.

\bibitem[Kin22]{Kindred-2022}
Thomas Kindred.
\newblock The virtual flyping theorem, 2022.
\newblock \href{https://arxiv.org/pdf/2210.03720.pdf}{ArXiv/2210.03720}.

\bibitem[Kir78]{Kirby-Problem}
Rob Kirby.
\newblock Problems in low dimensional manifold theory.
\newblock In {\em Algebraic and geometric topology ({P}roc. {S}ympos. {P}ure
  {M}ath., {S}tanford {U}niv., {S}tanford, {C}alif., 1976), {P}art 2}, Proc.
  Sympos. Pure Math., XXXII, pages 273--312. Amer. Math. Soc., Providence,
  R.I., 1978.

\bibitem[KL99]{Kirk-Livingston-1999}
Paul Kirk and Charles Livingston.
\newblock Twisted knot polynomials: inversion, mutation and concordance.
\newblock {\em Topology}, 38(3):663--671, 1999.

\bibitem[KL01]{Kirk-Livingston-2001}
Paul Kirk and Charles Livingston.
\newblock Concordance and mutation.
\newblock {\em Geom. Topol.}, 5:831--883, 2001.

\bibitem[Kup03]{Kuperberg}
Greg Kuperberg.
\newblock What is a virtual link?
\newblock {\em Algebr. Geom. Topol.}, 3:587--591 (electronic), 2003.

\bibitem[Lin23]{Lin-algo}
Damian~J. Lin.
\newblock A table of mock {S}eifert matrices, 2023.
\newblock
  \href{https://github.com/DamianJLin/msmat}{github.com/DamianJLin/msmat}.

\bibitem[MO07]{manolescu-owens}
Ciprian Manolescu and Brendan Owens.
\newblock A concordance invariant from the {F}loer homology of double branched
  covers.
\newblock {\em Int. Math. Res. Not. IMRN}, (20):Art. ID rnm077, 21, 2007.

\bibitem[MT93]{Menasco-Thistlethwaite}
William Menasco and Morwen Thistlethwaite.
\newblock The classification of alternating links.
\newblock {\em Ann. of Math.}, 138(1):113 -- 171, 1993.

\bibitem[MT01]{Mohar-Thomassen}
Bojan Mohar and Carsten Thomassen.
\newblock {\em Graphs on surfaces}.
\newblock Johns Hopkins Studies in the Mathematical Sciences. Johns Hopkins
  University Press, Baltimore, MD, 2001.



\bibitem[OS03a]{Ozsvath-Szabo-2003a}
Peter Ozsv\'{a}th and Zolt\'{a}n Szab\'{o}.
\newblock Absolutely graded {F}loer homologies and intersection forms for
  four-manifolds with boundary.
\newblock {\em Adv. Math.}, 173(2):179--261, 2003.

\bibitem[OS03b]{Ozsvath-Szabo-2003b}
Peter Ozsv\'{a}th and Zolt\'{a}n Szab\'{o}.
\newblock Heegaard {F}loer homology and alternating knots.
\newblock {\em Geom. Topol.}, 7:225--254, 2003.

\bibitem[OS05]{Ozsvath-Szabo-2005}
Peter Ozsv\'{a}th and Zolt\'{a}n Szab\'{o}.
\newblock On the Heegaard Floer homology of branched double-covers.
\newblock {\em Adv. Math.}, 194(1):1-33, 2005.

\bibitem[Pic20]{Piccirillo-2020}
Lisa Piccirillo.
\newblock The {C}onway knot is not slice.
\newblock {\em Ann. of Math. (2)}, 191(2):581--591, 2020.

\bibitem[SW10]{Su-Wagner}
Yi~Su and David~G. Wagner.
\newblock The lattice of integer flows of a regular matroid.
\newblock {\em Journal of Combinatorial Theory, Series B}, 100(6):691 -- 703,
  2010.

\bibitem[Tra22]{Traldi-2022}
Lorenzo Traldi.
\newblock Link mutations and {G}oeritz matrices.
\newblock {\em Osaka J. Math.}, 59(4):881--904, 2022.

\bibitem[Vir76]{Viro-1976}
Oleg~Ja. Viro.
\newblock Nonprojecting isotopies and knots with homeomorphic coverings.
\newblock {\em Zap. Nau\v{c}n. Sem. Leningrad. Otdel. Mat. Inst. Steklov.
  (LOMI)}, 66:133--147, 207--208, 1976.
\newblock Studies in topology, II.

\bibitem[Wat90]{Watkins-1990}
William Watkins.
\newblock The {L}aplacian matrix of a graph:~unimodular congruence.
\newblock {\em Linear and Multilinear Algebra}, 28(1-2):35--43, 1990.

\bibitem[Wat94]{Watkins-1994}
William Watkins.
\newblock Unimodular congruence of the {L}aplacian matrix of a graph.
\newblock {\em Linear Algebra Appl.}, 201:43--49, 1994.

\bibitem[Yas14]{Yasuhara}
Akira Yasuhara. 
\newblock An elementary proof for that all unoriented spanning surfaces of a link are related by attaching/deleting tubes and {M}\"obius bands.
\newblock {\em J. Knot Theory Ramifications} 23(1): Paper no 1450004, 5 (2014).

\end{thebibliography}

\end{document}